\setlist[enumerate,1]{label=\textup{(\arabic*)}}% ensure enumerates in theorems are upright
\newcommand*{\MRref}[2]{ \href{http://www.ams.org/mathscinet-getitem?mr=#1}{MR \textbf{#1}}}
\renewcommand*{\PrintDOI}[1]{\href{http://dx.doi.org/\detokenize{#1}}{doi: \detokenize{#1}}}
\newtheorem{thm}{Theorem}[section]
\newtheorem{proposition}[thm]{Proposition}
\newtheorem{lemma}[thm]{Lemma}
\newtheorem{corollary}[thm]{Corollary}
\theoremstyle{definition}
\newtheorem{definition}[thm]{Definition}
\theoremstyle{remark}
\newtheorem{remark}[thm]{Remark}
\newtheorem{exm}[thm]{Example}
\newcommand{\thmref}[1]{Theorem~\ref{#1}}
\newcommand{\secref}[1]{Section~\ref{#1}}
\newcommand{\proref}[1]{Proposition~\ref{#1}}
\newcommand{\lemref}[1]{Lemma~\ref{#1}}
\newcommand{\corref}[1]{Corollary~\ref{#1}}
\newcommand{\defref}[1]{Definition~\ref{#1}}
\numberwithin{equation}{section}
\newcommand{\CC}{\mathbb{C}}
\newcommand{\NN}{\mathbb{N}}
\newcommand{\RR}{\mathbb{R}}
\newcommand{\TT}{\mathbb{T}}
\newcommand{\Hilm}[1][E]{\mathcal{#1}}% Hilbert module
\newcommand{\ZZ}{\mathbb{Z}}
\newcommand*{\nb}{\nobreakdash}
\newcommand{\BB}{\mathcal{B}}
\def\A{\mathcal A}
\def\1{\mathbbm 1}
\def\B{\mathcal B}
\newcommand{\Tt}{\mathcal{T}}
\newcommand*{\Star}{\(^*\)\nobreakdash-}
\newcommand*{\Comp}{\mathbb K}%compact operators on a Hilbert module
\newcommand{\Cst}{\mathrm{C}^*}% $\Cst$-algebras and friends
\newcommand{\Toepr}{\mathcal{T}_\lambda}
\newcommand{\clsp}{\operatorname{\overline{\mbox{span}}}}
\DeclarePairedDelimiterX{\braket}[2]{\langle}{\rangle}{#1\,\delimsize\vert\,\mathopen{}#2}% inner product
\DeclarePairedDelimiterX{\BRAKET}[2]{\langle}{\rangle}{\!\delimsize\langle#1\,\delimsize\vert\,\mathopen{}#2\delimsize\rangle\!}% inner product
\def\kmsb{$\mathrm{KMS}_{\beta}$ }
\def\kmsi{$\mathrm{KMS}_{\infty}$ }
\def\kmso{$\mathrm{KMS}_{0}$ }
\def\kmsop{$\mathrm{KMS}_{0^+}$ }
\def\clsp{\overline{\operatorname {span}}}
\def\inv{^{-1}}
\def\Thetad{\Theta_d}
\def\rank{\operatorname{rank}}
\def\Esig{\Hilm^\sigma}
\def\bp{\begin{proof}}
\def\ep{\end{proof}}
\def\ls{L^{\sigma}}
\def\athd{\mathcal A_{\Thetad}}
\def\too{\longrightarrow}
\author{Zahra Afsar}
\address[Z. Afsar]{
School of Mathematics and Statistics\\
Sydney University\\
NSW 2006\\
Australia}
\email{zahra.afsar@sydney.edu.au}
\author{Marcelo Laca}
\address[M. Laca]{Department of Mathematics and Statistics, University of Victoria, Victoria, P.O Box 1700 STN CSC, BC V8W 2Y2, Canada}
\thanks{M. Laca was supported by  the Natural Sciences and Engineering Research Council of Canada, 
Discovery Grant RGPIN-2017-04052}
\email{laca@uvic.ca}
\author{Jacqui Ramagge}
\address[J. Ramagge]{Department of Mathematical Sciences, Durham University, UK}
\email{jacqui.ramagge@durham.ac.uk}
\author{Camila F. Sehnem}
\address[C. F. Sehnem]{School of Mathematics and Statistics, Victoria University of Wellington, P.O. Box 600, Wellington 6140, New Zealand}
\thanks{C.F. Sehnem was supported by the Marsden Fund of the Royal Society of New Zealand, grant No. \!\!18-VUW-056}
\email{camila.sehnem@vuw.ac.nz}
\thanks{This research was supported by the Australian Research Council, Discovery Grant DP200100155.}
\date{24 July 2021, minor changes 11 February 2022}
\title[Equilibrium on extensions of noncommutative tori]{Equilibrium on Toeplitz extensions of higher dimensional noncommutative tori\\  \ \ \\ }
\begin{document}

\begin{abstract}
The $\Cst$-algebra generated by the left-regular representation of $\NN^n$ twisted by a $2$\nb-cocycle is a Toeplitz extension of an $n$\nb-dimensional noncommutative torus, on which each vector $r \in [0,\infty)^n$ determines a one-parameter subgroup of the gauge action. We show that the equilibrium states of the resulting $\Cst$-dynamical system are parametrised by tracial states of the noncommutative torus corresponding to the restriction of the cocycle to the vanishing coordinates of $r$. These in turn correspond to probability measures on a classical torus whose dimension depends on a certain degeneracy index of the restricted cocycle. Our results generalise the phase transition on the Toeplitz noncommutative tori used as building blocks  in recent work of Brownlowe, Hawkins and Sims, and of  Afsar, an Huef, Raeburn and Sims.
\end{abstract}

\maketitle

\section{Introduction}
Suppose $\Theta = (\theta_{i,j})$ is an $n \times n$ antisymmetric matrix with real coefficients. The
  \emph{$n$\nb-dimensional noncommutative torus}~$\A_\Theta$ is the universal $\Cst$\nb-algebra
generated by unitaries $U_1,\ldots , U_n$ satisfying 
\[U_{j}U_{k}={e^{-2\pi i\theta_{j,k}}}U_{k}U_{j} \qquad j, k = 1, 2, \ldots , n.\]   
The matrix $\Theta$ determines a circle-valued $2$\nb-cocycle 
$\sigma_\Theta \colon \ZZ^n \times \ZZ^n \to \TT$ on $\ZZ^n$ given by  
\begin{equation}\label{eq:cocycle-theta}
\sigma_\Theta (x,y)\coloneqq e^{-\pi i  \braket{x}{ \Theta  y}}, 
\end{equation} 
and the noncommutative torus $\A_\Theta$ is canonically isomorphic to the twisted group $\Cst$\nb-algebra $\Cst(\ZZ^n,\sigma_{\Theta})$, see \cite{C.Phillips, MR1047281}. That is, $\A_\Theta$ is also the universal $\Cst$\nb-algebra for unitary 
$\sigma_\Theta$\nb-representations of~$\ZZ^n$.  In particular, a concrete realisation of $\A_\Theta$ as a $\Cst$\nb-algebra of operators on a Hilbert space is provided by  the usual left regular unitary $\sigma_\Theta$\nb-representation $\lambda^{\sigma_\Theta}$  of~$\ZZ^n$ on~$\ell^2(\ZZ^n)$. This is defined on a canonical orthonormal basis vector by $\lambda^{\sigma_\Theta}_x \delta_y = e^{-\pi i \braket{x}{ \Theta  y}}\delta_{x+y}$ for $x,y \in \ZZ^n$. The universal property defining $\A_\Theta$ holds for $\lambda^{\sigma_\Theta}$ because $\ZZ^n$ is abelian, hence amenable. See, for example, \cite{10.2307/1995410} for the theory of twisted group algebras.

In a recent paper, Latr\`emoli\'ere and Packer \cite{LP} considered noncommutative solenoids, which  can be described as inductive limits of $2$-dimensional noncommutative tori, in analogy with their commutative counterparts. Subsequently, Brownlowe, Hawkins and Sims \cite{brownlowe_hawkins_sims_2019} defined Toeplitz extensions of the noncommutative solenoids of \cite{LP} and studied their phase transitions under  natural dynamics. The solenoidal extensions of \cite{brownlowe_hawkins_sims_2019}  are constructed as direct limits of building blocks, each of which is a Toeplitz-type extension of a noncommutative $2$\nb-torus. These Toeplitz-type extensions are obtained by replacing one of the two canonical unitary generators in the presentation of a noncommutative $2$\nb-torus by an isometry. 

Prompted by the constructions given in \cite{LP,brownlowe_hawkins_sims_2019}, Afsar, an Huef, Raeburn and Sims \cite{AaHRS} defined  higher-rank versions of Toeplitz noncommutative solenoids, using  Toeplitz-type extensions of higher-dimensional noncommutative tori as building blocks.  As in \cite{brownlowe_hawkins_sims_2019}, each of these building blocks is obtained by replacing  one of the two generating unitaries of a noncommutative $2$\nb-torus by a Nica-covariant isometric representation~$V$ of $\NN^k$, and the other  by a unitary representation $U$ of $\ZZ^d$; the commutation relations satisfied by $U$ and $V$ are then encoded in a 
$k \times d$ real matrix~$\theta$, see \cite[Equation (2.1)]{AaHRS}.  The phase transition computed in \cite{AaHRS} is  parametrised by measures on the classical torus $\TT^d$, which is the spectrum of the subalgebra $\Cst(U_x\mid x\in \ZZ^d)$.

The present investigation was sparked by a question raised by Ian Putnam about the possibility of replacing the classical torus 
$\Cst(U_x\mid x\in \ZZ^d)$ by a noncommutative one.  Instead of viewing a higher-rank noncommutative torus as a $\Cst$-algebra
generated by two unitary representations, one of $\ZZ^k$  and one of $\ZZ^d$,  satisfying a commutation relation, we consider  all the $n=k+d$ generators on the same footing. Thus, we regard an $n$\nb-dimensional noncommutative torus as the
 twisted group algebra  $\Cst (\ZZ^n,\sigma)$ in which the twist is given by a circle-valued $2$-cocycle~$\sigma$.  Since the circle-valued second cohomology of $\NN^n$ is the same as that of $\ZZ^n$ by \cite[Corollary 2.3]{LR1}, it seems natural to take the {\em reduced twisted semigroup $\Cst$-algebra} $\Tt_r(\NN^n,\sigma)$ as the appropriate Toeplitz extension of the noncommutative torus associated to an extension of~$\sigma$ to $\ZZ^n$. The Toeplitz-type extensions of noncommutative tori from \cite{AaHRS} can then be obtained as quotients of special cases of ours. 

Our purpose here is two-fold. Firstly we aim to describe various realisations of our Toeplitz noncommutative tori,  using product systems, semigroup crossed products, and generators and relations. Secondly, we aim to compute their phase transitions of  \kmsb states with respect to one-parameter subgroups of the canonical dual action of $\TT^n$. Along the way we also provide an explicit description of the center and the space of traces of higher-rank noncommutative tori. 
We do not address here the question of how to use our Toeplitz noncommutative tori   as building blocks for
generalisations of the higher-rank noncommutative solenoids defined in~\cite{AaHRS},  leaving the issue for further work.

We begin in \secref{ToepExtNoncommTori} with a brief discussion of $2$\nb-cocycles on $\NN^n$ and we introduce their twisted semigroup algebras using as a model the left regular representation twisted by a cocycle. Then we show that these twisted semigroup algebras have a universal property with respect to twisted isometric representations  that satisfy certain $^*$\nb-commuting relations, which we show to be equivalent to Nica-covariance. They can also be characterised as Nica--Toeplitz $\Cst$\nb-algebras of product systems over $\NN^n$ and as twisted semigroup crossed products.  We then use the cocycle extension results of~\cite{LR1} to show that these twisted semigroup algebras are indeed extensions of rank\nb-$n$ noncommutative tori.

As with every discrete abelian group, the circle-valued second cohomology of $\ZZ^n$ can be described in terms of symplectic bicharacters, which in this case are parametrised by antisymmetric  matrices over $\RR$; with our conventions these matrices are determined up to even integers. So we assume that $\sigma=\sigma_\Theta$ comes from an antisymmetric matrix $\Theta$ as in \eqref{eq:cocycle-theta}, which allows us to give a presentation of $\Tt_r(\NN^n,\sigma_\Theta)$ in terms of generators and relations that are entirely analogous to those in the original presentation of  rotation algebras and, more generally, of higher-rank noncommutative tori.  Using this presentation we  show in \proref{pro:AaHRStoeptori} that the building blocks from \cite{AaHRS} are quotients of  particular cases of our Toeplitz extensions of noncommutative tori, corresponding to antisymmetric matrices that have two diagonal blocks of zeros and such that the generating isometries corresponding to the second block are unitaries.

In \secref{sec:characterisation} we consider the dynamics induced on $\Tt_r(\NN^n,\sigma_\Theta)$ by a vector $r\in \RR^n$ with $k$ positive coordinates and $d = n-k$ vanishing coordinates. In order to compute the \kmsb states, 
we follow the strategy developed in \cite{LACA1998330} for the Bost--Connes system, showing that \kmsb states are parametrised by tracial states of a corner. The projection $Q$ that defines the corner is the common orthogonal complement of the range projections of the generating isometries that are not fixed by the dynamics, that is, those corresponding to the nonvanishing coordinates of the vector $r$.

In \secref{sec:traces} we show that the corner $Q\Tt_r(\NN^n,\sigma_\Theta)Q$ is actually isomorphic to the Toeplitz noncommutative torus associated to the $d\times d$ submatrix $\Theta_d$ corresponding to the vanishing coordinates of the vector~$r$. 
One of our main technical results is \proref{pro:KMSfromtraces}, where we give a formula for \kmsb states in terms of tracial states of $\Tt_r(\NN^d,\sigma_{\Theta_d})$ and a finite Euler product that depends on the cocycle. The tracial states of this rank-$d$ Toeplitz noncommutative torus factor through the associated noncommutative torus. This led us to compute the tracial states of the higher-rank noncommutative torus $\A_D$ associated to a $d\times d$ antisymmetric real matrix~$D$. We show that these tracial states come from states of the center $\mathrm{Z}(\A_D)$, which is a classical torus of dimension equal to a certain degeneracy index of $D$. 

Our  main results are in \secref{sec:mainresults}, starting with \thmref{thm:main}, where we give a parametrisation of the simplex of \kmsb states in terms of probability measures on a classical torus. As a corollary, for the special case of antisymmetric matrix with zero diagonal blocks, we recover the phase transition for the building blocks of \cite{AaHRS}.

In  \secref{sec:kmsiandkmso} we compute the equilibrium states at $\beta =\infty$ and $\beta =0$. In addition to the usual notions of ground states and invariant traces, we also study the limiting cases, namely $\beta\to \infty$, which gives the  \kmsi states, and the limits of \kmsb states as $\beta \to 0^+$, a class that we call \kmsop states. We show that  in general not every invariant trace is such a \kmsop state.

Finally,  in \secref{sec:alternative} we realise the Toeplitz noncommutative tori $\Tt_r(\NN^n,\sigma_\Theta)$ as the Nica--Toeplitz algebra of a product system over 
$\NN^k$ in which the coefficient algebra is the Toeplitz noncommutative torus associated to the restriction of $\sigma_\Theta$ to $\NN^d$. This allows us to compare our results  to those obtained from the characterisation of \kmsb states given in \cite{ALN20}.

\subsection*{Acknowledgments:} This research was started during a visit of Z.~Afsar, J.~Ramagge and C.~F.~Sehnem to the University of Victoria and continued through visits of C.~F.~Sehnem and M.~Laca to the University of Sydney. We would like to acknowledge the support of these institutions and to thank them for the hospitality.

\section{Toeplitz extensions of noncommutative tori}\label{ToepExtNoncommTori}
We wish to consider the natural Toeplitz-type extensions of noncommutative tori. Since there are some
nuances associated with passing from unitaries to isometries, it is more straightforward to introduce these extensions spatially, via concrete projective representations of $\NN^n$. We begin with a very brief introduction to projective isometric representations of semigroups  and twisted semigroup $\Cst$\nb-algebras.
\subsection{Projective isometric representations}
\begin{definition} Suppose $P$ is a subsemigroup of a group $G$.
A circle-valued semigroup \emph{$2$\nb-cocycle} is a function $\sigma\colon P\times P \to \TT$ such that
\[
\sigma(p,q) \sigma(pq,r) = \sigma(p,qr) \sigma(q,r), \qquad p,q,r \in P.
\]
\end{definition}
The set of all $\TT$-valued $2$\nb-cocycles on~$P$ is a group under pointwise multiplication, which we denote by $Z^2( P,\TT)$. 

\begin{definition} Let $\sigma \in Z^2(P,\TT)$ be a $2$\nb-cocycle on~$P$. An {\em isometric $\sigma$-representation} of~$P$ 
is a map $p \mapsto V_p$ into the semigroup of isometries on a Hilbert space~$\Hilm[H]$ such that 
\[
 V_p V_q = \sigma(p,q) V_{p+q},  \qquad \quad p,q\in P.
\]
\end{definition}

The isometric $\sigma$\nb-representations are multiplicative only up to scalars. If~$P$ is unital, then $V_e$ is a multiple of the identity on~$\Hilm[H]$, where~$e$ denotes the unit element of~$P$. When $\sigma$ is normalised, that is, $\sigma(e,e)=1$,  then $V_e=1_{\Hilm[H]}$. In this case $\sigma(p,e)=\sigma(e,p)=1$ for all~$p\in P$.  

One can generate an example of a $2$\nb-cocycle by starting with an arbitrary function $\lambda\colon P \to \TT$ 
and defining $\sigma_\lambda(p,q) \coloneqq \lambda(pq)\lambda(p)\inv \lambda(q)\inv$, for all $p,q\in P$. 
This is called a {\em coboundary}, and is trivial in the sense that any $\sigma_\lambda$\nb-representation can be
transformed into a true isometric representation through multiplication by the scalar-valued function~$\lambda$. 
 We denote by $B^2(P,\TT)$ the subgroup of all coboundaries. 
A standard argument shows that the representations associated to two $2$\nb-cocycles that differ by a coboundary are equivalent, in the sense that one is a multiple of the other by a circle-valued function. So what is interesting is the group $$H^2(P,\TT)\coloneqq Z^2(P,\TT)/B^2(P,\TT)$$ of cocycles modulo coboundaries, which is called the \emph{second cohomology group} of~$P$. 

Next we verify that for each $\sigma$ in $Z^2(P,\TT)$, there exists an isometric $\sigma$\nb-representation of~$P$.

\begin{proposition} Suppose~$\sigma$ is a circle-valued $2$\nb-cocycle on~$P$ and let $\{\delta_q\mid q\in P\}$ be the canonical orthonormal basis of $\ell^2(P)$. Then for each $p\in P$,
 the map $\ls_p$ defined by 
\[\ls_p\delta_q\coloneqq\sigma(p,q)\delta_{pq}, \qquad\qquad (q\in P)\]
extends uniquely to an isometry on $\ell^2(P)$ such that
\begin{equation}\label{eq:left-inverse}
\begin{aligned}
(L^{\sigma}_p)^*\delta_q=\begin{cases}\overline{\sigma(p,q)}\delta_{p\inv q}&\text{if }p\leq q,\\
0  &\text{otherwise}.
\end{cases}
\end{aligned}
\end{equation} Moreover, we have for all $p,q\in P$
$$\ls_p \ls_q = \sigma(p,q) \ls_{pq},\qquad\text{and} \qquad \ls_p (\ls_p)^* = 1_{pP},$$
where $1_{pP}$ is the multiplication operator by the characteristic function of the set $pP=\{pq\mid q\in P\}$.
\end{proposition}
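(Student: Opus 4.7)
My plan is to verify the four assertions in turn, all by direct computation on the canonical basis $\{\delta_q\mid q\in P\}$. For the extension to an isometry, I would note that because $P$ sits inside a group~$G$, left multiplication by~$p$ is injective on~$P$, so $\{\delta_{pq}\mid q\in P\}$ is an orthonormal subset of $\ell^2(P)$. Since $\abs{\sigma(p,q)}=1$, the family $\{\sigma(p,q)\delta_{pq}\mid q\in P\}$ is likewise orthonormal, and hence $L^\sigma_p$ maps an orthonormal basis into an orthonormal system; this forces a unique isometric extension from the finitely supported functions to all of~$\ell^2(P)$.

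For the adjoint formula I would compute the matrix coefficient
\begin{equation*}
\braket{L^\sigma_p\delta_r}{\delta_q} = \sigma(p,r)\braket{\delta_{pr}}{\delta_q},
\end{equation*}
which is nonzero exactly when $q=pr$, that is, when $p\leq q$ and $r=p\inv q$. Taking complex conjugates then yields the coefficient $\overline{\sigma(p,p\inv q)}$ in front of $\delta_{p\inv q}$, and $(L^\sigma_p)^*\delta_q=0$ otherwise; this gives \eqref{eq:left-inverse}.

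To verify the twisted product, applying the operators to a basis vector gives
\begin{equation*}
L^\sigma_pL^\sigma_q\delta_r = \sigma(q,r)\sigma(p,qr)\delta_{pqr},
\end{equation*}
and the $2$-cocycle identity $\sigma(p,q)\sigma(pq,r)=\sigma(p,qr)\sigma(q,r)$ rewrites the scalar factor as $\sigma(p,q)\sigma(pq,r)$, which is precisely $\sigma(p,q)L^\sigma_{pq}\delta_r$. For the range projection, the adjoint formula sends $\delta_q$ to~$0$ whenever $q\notin pP$ and otherwise to $\overline{\sigma(p,p\inv q)}\delta_{p\inv q}$; applying $L^\sigma_p$ then reintroduces a factor $\sigma(p,p\inv q)$ that cancels the conjugate, leaving $\delta_q$. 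Thus $L^\sigma_p(L^\sigma_p)^*$ acts on basis vectors as the orthogonal projection onto $\clsp\{\delta_q\mid q\in pP\}$, which is the multiplication operator~$1_{pP}$.

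I do not anticipate a genuine obstacle: every step is a mechanical manipulation. The only ingredient that merits care is the use of the embedding $P\hookrightarrow G$, which both makes $q\mapsto pq$ injective (so that the first step goes through) and gives unambiguous meaning to $p\inv q\in P$ exactly when $p\leq q$.
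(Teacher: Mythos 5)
Your proof is correct and proceeds by essentially the same basis-vector computations as the paper's: cancellativity of $P$ (equivalently, the embedding $P\hookrightarrow G$) gives the unique isometric extension, the adjoint is read off on basis vectors, and the cocycle identity gives the twisted product formula; your explicit check that $L^\sigma_p(L^\sigma_p)^*$ fixes $\delta_q$ for $q\in pP$ and kills it otherwise is slightly more detailed than the paper's appeal to the range being $\ell^2(pP)$. One point worth flagging: your matrix-coefficient computation correctly produces the scalar $\overline{\sigma(p,p^{-1}q)}$ in front of $\delta_{p^{-1}q}$, whereas the displayed adjoint formula in the statement reads $\overline{\sigma(p,q)}$; these differ in general, so the display appears to contain a typo that your argument silently corrects, and you should not claim to have derived it verbatim. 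The paper's own justification --- that the displayed map is a left inverse of $L^\sigma_p$ ``since $\sigma(p,q)\overline{\sigma(p,q)}=1$'' --- likewise only works when the scalar $\overline{\sigma(p,q)}$ is attached to the basis vector $\delta_{pq}$ rather than to $\delta_q$, which is exactly the coefficient you obtained.
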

\bp
Since $P$ is a cancellative semigroup,  $\ls_p$ maps the elements of the standard orthonormal basis one to one and onto those of $\{\delta_{pq}\mid q\in P\}$, multiplied by complex numbers of modulus~$1$. So $\ls_p$ extends uniquely by linearity and continuity 
to an isometric operator (also denoted $\ls_p$) on $\ell^2(P)$ with range $\ell^2(pP)$.
The second assertion follows because the formula given in the right-hand side of \eqref{eq:left-inverse} defines a left inverse for~$\ls_p$ on orthonormal basis vectors since $\sigma(p,q) \overline{\sigma(p,q)} = 1$.

To prove the last claim, we compute first
 \[
 \ls_p \ls_q \delta_r =  \ls_p \sigma(q,r) \delta_{qr} = \sigma(p,qr)\sigma(q,r) \delta_{pqr},
 \]
 and then 
 \[
\ls_{pq}\delta_r = \sigma(pq,r) \delta_{pqr}.
\]
So $\sigma(p,q) \ls_{pq}\delta_r = \sigma(p,q)\sigma(pq,r) \delta_{pqr} = \ls_p \ls_q \delta_r $ by the cocycle identity.
\ep
\begin{definition} 
Let $\sigma$ be a circle-valued $2$\nb-cocycle on~$P$. The  {\em left regular $\sigma$-representation} of $P$ is the map $p\mapsto \ls_p \in \ell^2(P)$.
The {\em reduced semigroup $\Cst$\nb-algebra twisted by $\sigma$} is the $\Cst$\nb-subalgebra of $\BB(\ell^2(P))$ generated by the operators $\ls_p$ with $p\in P$; it will be denoted by~$\Tt_r(P,\sigma)$. 
\end{definition}
When the cocycle is understood and there is no risk of confusion, we simply refer to $\Tt_r(P,\sigma)$ as a (reduced) twisted semigroup $\Cst$\nb-algebra.

\subsection{Toeplitz extensions of higher dimensional noncommutative tori} 
 By analogy with the realisation of an $n$\nb-dimensional noncommutative torus as a twisted group algebra of $\ZZ^n$, we define the Toeplitz noncommutative torus as a twisted semigroup $\Cst$\nb-algebra associated to a $2$\nb-cocycle on the semigroup $\NN^n$. We will see below in \proref{pro:toeplitz-extension} that this is indeed a natural extension of the corresponding noncommutative torus.

\begin{definition} Let $\sigma$ be a circle-valued $2$\nb-cocycle on $\NN^n$. The associated $n$\nb-dimensional \emph{Toeplitz noncommutative torus} is the twisted semigroup $\Cst$\nb-algebra $\Tt_r(\NN^n,\sigma)$.
\end{definition}

The Toeplitz noncommutative torus $\Tt_r(\NN^n,\sigma)$  also has a universal property with respect to a certain class of twisted isometric representations. We define this next by adding  extra relations modelled on  the left regular $\sigma$\nb-representation.

\begin{definition}\label{dfn-covariance}
A {\em covariant  isometric $\sigma$\nb-representation} of $\NN^n$ on a Hilbert space $\mathcal H$ 
is a map $w\colon \NN^n \to  \mathcal B(\mathcal H)$ such that
\begin{enumerate}
\item\label{dfn:cov-relation1} $w^*_pw_p=1$;
\item \label{dfn:cov-relation2} $w_p w_q=\sigma_{}(p,q)w_{p+q}$; 
\item\label{dfn:cov-relation3} $w^*_pw_q=\overline{\sigma_{}(p,-p+(p\vee q))}\sigma(q,-q+(p\vee q)) w_{-p+(p\vee q)}w_{-q+(p\vee q)}^*$.
\end{enumerate}
\end{definition}
 The third condition above may seem a bit unusual at first but it will be convenient in some computations. We show next that it is equivalent to the better-known Nica-covariance condition from \cite[Section 3.3]{Nica:Wiener--hopf_operators}.
\begin{lemma} \label{lem:nicacovariance} An isometric $\sigma$\nb-representation $w$ of $\NN^n$
satisfies condition (3) of \defref{dfn-covariance} if and only if it is Nica-covariant, that is,
\[
w_pw_p^* w_qw_q^* = w_{p\vee q}w_{p\vee q}^* \qquad p,q\in \NN^n.
\]
In particular, 
the left regular $\sigma$\nb-representation of~$\NN^n$ is covariant in the sense of Definition~\ref{dfn-covariance}. 
\begin{proof}
Assume first (3) holds. Then
 \begin{equation*}
w_pw_p^* w_qw_q^*
=\overline{\sigma(p,-p+(p\vee q))}\sigma(q,-q+(p\vee q))w_p w_{-p+(p\vee q)}(w_{-q+(p\vee q)})^*w_q^*
=w_{p\vee q} w_{p\vee q}^*,
\end{equation*}
where we used the cocycle to combine factors in the last equality. Conversely, assume that $w$ is Nica covariant. Then 
\begin{equation*}
\begin{aligned}
w_p^* w_q 
&= w_p^*w_pw_p^* w_qw_q^* w_q = w_p^* (w_{p\vee q} w_{p\vee q}^*)  w_q \\
&= \overline{\sigma(p,-p+(p\vee q))} \sigma(q,-q+(p\vee q)) w_p^* (w_p w_{-p+(p\vee q)} w_{-q+(p\vee q)}^* w_q^*)  w_q\\
&= \overline{\sigma(p,-p+(p\vee q))} \sigma(q,-q+(p\vee q)) w_{-p+(p\vee q)} w_{-q+(p\vee q)}^*  
\end{aligned}\end{equation*}
Since $\ls_p(\ls_p)^* $ is the operator of multiplication by the characteristic function of ${p+\NN^n}$,  the left regular $\sigma$-representation $\ls$ satisfies Nica-covariance, hence also condition (3).
\end{proof}
\end{lemma}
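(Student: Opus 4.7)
The plan is to establish the equivalence in both directions by carrying out symbolic manipulations using only the cocycle identity, the isometry relation $w_p^* w_p = 1$, and the Nica-covariance formula. No analytic input is needed; everything reduces to bookkeeping of cocycle scalars.

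For the forward direction, assuming condition (3), I would compute $w_p w_p^* w_q w_q^*$ by substituting the formula from (3) into the middle factor $w_p^* w_q$. The resulting expression is a scalar multiple of $w_p w_{-p+(p\vee q)} w_{-q+(p\vee q)}^* w_q^*$. I would then recombine the two $w$-factors on the left using the cocycle relation $w_p w_{-p+(p\vee q)} = \sigma(p,-p+(p\vee q))\, w_{p\vee q}$, and similarly recombine the two adjoint factors on the right to produce $w_{p\vee q}^*$ up to a factor of $\overline{\sigma(q,-q+(p\vee q))}$. The scalars match precisely those in condition (3) with opposite conjugations, so they cancel, leaving $w_{p\vee q} w_{p\vee q}^*$.

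For the converse, assuming Nica-covariance, I would start from $w_p^* w_q$ and use the isometry identity to pad: $w_p^* w_q = (w_p^* w_p) w_p^* w_q (w_q^* w_q) = w_p^* (w_p w_p^* w_q w_q^*) w_q$. Applying Nica-covariance inside the parenthesis gives $w_p^* w_{p\vee q} w_{p\vee q}^* w_q$, and then factoring $w_{p\vee q}$ via the cocycle as $\sigma(p,-p+(p\vee q))^{-1} w_p w_{-p+(p\vee q)}$ (and analogously for the adjoint factor) allows the outer $w_p^* w_p$ and $w_q^* w_q$ pieces to collapse to $1$, leaving exactly the right-hand side of condition (3) with the correct scalars.

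For the final claim about the left regular $\sigma$-representation, I would simply invoke the identity $\ls_p (\ls_p)^* = 1_{p+\NN^n}$ already recorded in the preceding proposition. Since these are commuting projections (multiplication by characteristic functions), their product is the characteristic function of $(p+\NN^n)\cap(q+\NN^n) = (p\vee q)+\NN^n$, which equals $\ls_{p\vee q}(\ls_{p\vee q})^*$; Nica-covariance follows, and then condition (3) follows by the equivalence just proved. The main place where care is required is matching the cocycle scalars in the first two steps, since the computation involves three pairs of arguments $(p,-p+(p\vee q))$, $(q,-q+(p\vee q))$, and the recombination scalars produced by the cocycle relation; I expect the identity $\sigma(p,q)\sigma(pq,r) = \sigma(p,qr)\sigma(q,r)$ itself not to be needed here, only the defining multiplicative relation $w_a w_b = \sigma(a,b) w_{a+b}$.
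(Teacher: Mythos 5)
Your proposal is correct and follows essentially the same route as the paper's proof: substitute condition (3) into the middle of $w_pw_p^*w_qw_q^*$ and recombine via the multiplicative relation for the forward direction, pad $w_p^*w_q$ with $w_p^*w_p$ and $w_q^*w_q$ and apply Nica-covariance for the converse, and deduce the left regular case from $\ls_p(\ls_p)^*=1_{p+\NN^n}$. Your remark that only the relation $w_aw_b=\sigma(a,b)w_{a+b}$ (and not the cocycle identity itself) is needed here is accurate and matches the paper's computation.
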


The following immediate consequence of \defref{dfn-covariance}  will be needed in the following sections.
\begin{lemma}\label{lem:relprimecomm}
Suppose $w\colon \NN^n \to  \mathcal B(\mathcal H)$ is a covariant  isometric $\sigma$\nb-representation of $\NN^n$, and suppose $x,y \in \NN^n$ satisfy $x \vee y = x+y$. Then
\[
w_xw_x^* w_y = w_y w_xw_x^*.
\]
\begin{proof}
Using first  property (3) and then twice property (2) of \defref{dfn-covariance}, we obtain
\begin{equation*}
\begin{aligned}
w_xw_x^* w_y =   \overline{\sigma_{}(x,y)}\sigma(y,x) w_xw_{y}w_{x}^* 
&= \overline{\sigma_{}(x,y)}\sigma(y,x)  \sigma(x,y) w_{x+y} w_{x}^*\\& =
\overline{\sigma_{}(x,y)}\sigma(y,x)  \sigma(x,y)\overline{\sigma_{}(y,x)} w_yw_{x}w_{x}^* .
\end{aligned}
\end{equation*}
This completes the proof of the lemma because  $\overline{\sigma_{}(x,y)}\sigma(y,x)  \sigma(x,y)\overline{\sigma_{}(y,x)} =1$.
\end{proof}
\end{lemma}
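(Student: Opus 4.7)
The plan is to apply property (3) of Definition~\ref{dfn-covariance} directly, exploiting the hypothesis $x \vee y = x+y$ to simplify the join-based exponents. Specifically, setting $p=x$, $q=y$ in (3), we have $-x+(x\vee y)=y$ and $-y+(x\vee y)=x$, so property (3) collapses to
\[
w_x^* w_y = \overline{\sigma(x,y)}\,\sigma(y,x)\, w_y w_x^*.
\]
This relation is the heart of the lemma: it already reorders $w_x^*$ past $w_y$ at the cost of a unimodular scalar.

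Next I would left-multiply by $w_x$ and use property (2) twice to rewrite $w_x w_y = \sigma(x,y) w_{x+y}$ and then $w_{x+y} = \overline{\sigma(y,x)}\, w_y w_x$. The accumulated scalar factor becomes $\overline{\sigma(x,y)}\sigma(y,x)\sigma(x,y)\overline{\sigma(y,x)}=1$, yielding $w_x w_x^* w_y = w_y w_x w_x^*$ as claimed.

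There is no real obstacle here: once one observes that $x\vee y = x+y$ is precisely the condition that makes the exponents in relation (3) reduce to $y$ and $x$, the remainder is bookkeeping with the cocycle identity and the $^*$-relation in (2). I would present the computation as a single displayed chain of equalities, tagging each step with the relation used, exactly in the spirit of the manipulation already seen in the proof of Lemma~\ref{lem:nicacovariance}.
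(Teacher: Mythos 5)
Your proposal is correct and follows exactly the paper's argument: apply Definition~\ref{dfn-covariance}(3) with the exponents collapsing to $y$ and $x$ thanks to $x\vee y = x+y$, then use property (2) twice, and observe that the accumulated scalar $\overline{\sigma(x,y)}\sigma(y,x)\sigma(x,y)\overline{\sigma(y,x)}$ equals $1$. Nothing is missing.
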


It will be useful to have a description of $\Tt_r(\NN^n, \sigma)$ as the Nica--Toeplitz algebra of a  compactly aligned  product system over 
$\NN^n$ (see \cite[Theorem~6.3]{Fowler:Product_systems}). This will allow us to apply results of~\cite{FOWLER1998171} and to relate our notion of a Toeplitz noncommutative torus to the one considered by Afsar, an~Huef, Raeburn and Sims \cite{AaHRS}. In order to simplify the notation, we assume from now on that~$\sigma$ is normalized, that is, $\sigma(0,0)=1$. For each $p\in \NN^n$, let $\Hilm_p\coloneqq \CC$ as a complex vector space and we write $\lambda\delta_p$ for an element $\lambda\in \Hilm_p$. We equip $\Hilm_p$ with the canonical structure of a correspondence over~$\CC$, that is, the left and right actions are given by multiplication in $\CC$ and the right inner product is simply $\braket{\lambda\delta_p}{\alpha\delta_p}=\bar{\lambda}\alpha\delta_0$ for all $\lambda,\alpha\in\Hilm_p$. We will use the cocycle $\sigma$ to define the multiplication maps of the product system 
$\Esig=(\Hilm_p)_{p\in\NN^n}$. For $p,q\in \NN^n$, we set \begin{equation*}
\begin{aligned}
\mu_{p,q}&&\colon \Hilm_p\times\Hilm_q&\to \Hilm_{p+q}\\
&&(\lambda\delta_p,\alpha\delta_q)&\mapsto\sigma(p,q) \lambda\alpha\delta_{p+q}.
\end{aligned}
\end{equation*} These maps are associative because $\sigma$ is a cocycle. So $\Esig$ is a product system over $\NN^n$, which is compactly aligned because $\Comp(\Hilm_p)=\CC$ for all $p\in \NN^n$.

\begin{proposition}\label{prop:product_system} Let $\mathcal{N}\Tt_{\Esig}$ be the Nica--Toeplitz algebra of the product system $\Esig=(\Hilm_p)_{p\in\NN^n}$ as above. The map which sends $\delta_p\in\Hilm_p$ to the isometry~$\ls_p$ in the left regular $\sigma$-representation induces an isomorphism $\mathcal{N}\Tt_{\Esig}\cong \Tt_r(\NN^n,\sigma)$.
\end{proposition}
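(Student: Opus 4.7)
The plan is to use the universal property of $\mathcal{N}\Tt_{\Esig}$ together with amenability of the quasi-lattice ordered group $(\ZZ^n,\NN^n)$. First I would unpack what a Toeplitz representation of $\Esig$ amounts to. Since each fiber $\Hilm_p$ is one-dimensional, a (Toeplitz) representation $\psi$ is completely determined by the operators $W_p\coloneqq \psi_p(\delta_p)$: the compatibility with inner products forces $W_p^*W_p=1$, and the multiplicativity $\psi_{p+q}(\mu_{p,q}(\delta_p,\delta_q))=\psi_p(\delta_p)\psi_q(\delta_q)$ is exactly the relation $W_pW_q=\sigma(p,q)W_{p+q}$. Thus Toeplitz representations of $\Esig$ correspond bijectively to isometric $\sigma$\nb-representations of $\NN^n$, and by \lemref{lem:nicacovariance} the Nica-covariance condition on $\psi$ translates precisely into the covariance condition of \defref{dfn-covariance}.

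Second, I would apply this correspondence to the left regular $\sigma$\nb-representation $\ls$, which is covariant by the remark in \lemref{lem:nicacovariance}. The universal property of $\mathcal{N}\Tt_{\Esig}$ (as in \cite[Theorem 6.3]{Fowler:Product_systems}) yields a $*$\nb-homomorphism $\pi\colon \mathcal{N}\Tt_{\Esig}\to \Tt_r(\NN^n,\sigma)$ sending $\delta_p$ to $\ls_p$. Surjectivity is immediate since the operators $\ls_p$ generate $\Tt_r(\NN^n,\sigma)$ by definition.

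Third, for injectivity I would invoke amenability. The quasi-lattice ordered group $(\ZZ^n,\NN^n)$ is abelian, hence amenable in the sense of Nica, so by the results of Laca--Raeburn/Fowler the canonical map from $\mathcal{N}\Tt_{\Esig}$ to the Fock representation is a $*$\nb-isomorphism. The Fock module of the product system $\Esig$ is $\bigoplus_{p\in\NN^n}\Hilm_p\cong \ell^2(\NN^n)$ under the identification $\delta_p\leftrightarrow \delta_p$, and the induced creation operator associated to $\delta_p$ sends $\delta_q$ to $\mu_{p,q}(\delta_p,\delta_q)=\sigma(p,q)\delta_{p+q}$, which is exactly $\ls_p\delta_q$. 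Hence the Fock representation of $\Esig$ coincides with the left regular $\sigma$\nb-representation, and $\pi$ is faithful.

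The main obstacle is the amenability step; an alternative route avoiding it would be a gauge-invariant uniqueness argument using the canonical $\TT^n$-action on both algebras (with respect to which $\pi$ is equivariant), combined with the observation that the fixed-point subalgebra of $\mathcal{N}\Tt_{\Esig}$ is the closed span of the range projections $w_pw_p^*$, which $\pi$ sends to the projections $1_{p+\NN^n}\in \BB(\ell^2(\NN^n))$, a faithful copy of the universal Nica spectrum of $\NN^n$. Either way the two algebras are identified via $\delta_p\mapsto \ls_p$.
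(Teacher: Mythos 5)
Your proposal is correct and follows essentially the same route as the paper: you check that the left regular $\sigma$\nb-representation gives a Nica-covariant representation of $\Esig$ (hence a surjection by the universal property), and you obtain injectivity from amenability of $\NN^n\subset\ZZ^n$ via the Fowler--Raeburn faithfulness criterion. The only cosmetic difference is that the paper verifies the criterion $\prod_i(1-\ls_{p_i}(\ls_{p_i})^*)\delta_0=\delta_0\neq 0$ directly for the representation at hand, whereas you observe that the left regular $\sigma$\nb-representation coincides with the Fock representation of $\Esig$ and invoke faithfulness of the latter, which is the same fact.
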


\begin{proof} Let $\phi_p$ denote the map $\lambda\delta_p\mapsto \lambda \ls_p$. For $p,q\in\NN^n$, we have $$\phi_p(\delta_p)\phi_q(\delta_q)=\ls_p\ls_q=\sigma(p,q)\ls_{p+q}=\phi_{p+q}(\delta_p\cdot \delta_q).$$ Hence $\phi=\{\phi_p\}_{p\in P}$ preserves the multiplication on~$\Esig$. It is also compatible with inner products because each~$\ls_p$ is an isometry. Notice next that the  homomorphism $\phi^{(p)}\colon\Comp(\Hilm_p)\to \Tt_r(\NN^n, \sigma)$ induced by~$\phi$ is given by $\lambda1_{\Hilm_p}\in\Comp(\Hilm_p)\mapsto \lambda \ls_p(\ls_p)^*.$ 
Hence \lemref{lem:nicacovariance} gives
\begin{equation*}
\phi^{(p)}(1_{\Hilm_p})\phi^{(q)}(1_{\Hilm_q})
=\ls_p(\ls_p)^*\ls_q(\ls_q)^*
=\ls_{p\vee q}(\ls_{p\vee q})^*
= \phi^{p\vee q}(1_{\Hilm_{p\vee q}}).
\end{equation*} 
This proves that $\phi$ is Nica covariant and therefore it induces a surjective  homomorphism $\mathcal{N}\Tt_{\Esig}\to\Tt_r(\NN^n, \sigma)$, which we still denote by~$\phi$. To see that $\phi$ is an isomorphism, notice that  \[
 \prod_{\substack{i=1}}^k(1_{\ell^2(\NN^n)}-\ls_{p_i}\ls_{p_i}{}^*)\delta_0=\delta_0
 \]
 whenever $p_1,p_2,\ldots,p_k\in \NN^n\setminus\{0\}$. Because $\NN^n$ is abelian, we can apply \cite[Theorem~5.1]{FOWLER1998171} (see  also \cite[Example~5.6]{FOWLER1998171}(1)) to deduce that~$\phi$ is injective.
\end{proof}

\begin{corollary}\label{cor:faithful_charac} Let $\sigma$ be a $2$\nb-cocycle on~$\NN^n$ and let $w\colon \NN^n\to B$ be a covariant isometric $\sigma$\nb-representation in a $\Cst$\nb-algebra $B$. Then there is a  homomorphism $\rho\colon\Tt_r(\NN^n,\sigma)\to B$ that sends the isometry $\ls_p$ to $w_p$ for all $p\in\NN^n$. Moreover, $\rho$ is faithful if and only if
\begin{equation}\label{eq:faithful}
\prod_{\substack{i=1}}^n(1_{\Hilm[H]} - w_{e_i}w_{e_i}^*)\neq 0.
\end{equation} As a consequence, relations \textup{(1)--(3)} from Definition~\textup{\ref{dfn-covariance}} form a presentation of $\Tt_r(\NN^n,\sigma)$.
\end{corollary}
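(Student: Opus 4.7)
The key is that \proref{prop:product_system} identifies $\Tt_r(\NN^n,\sigma)$ with the Nica--Toeplitz algebra $\mathcal{N}\Tt_{\Esig}$, so the plan is to translate a given covariant isometric $\sigma$\nb-representation $w$ into a Nica-covariant representation of $\Esig$ and then invoke Fowler's uniqueness result \cite[Theorem~5.1]{FOWLER1998171}.

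First I would set $\psi_p(\lambda\delta_p)\coloneqq\lambda w_p$. Relation \textup{(2)} of \defref{dfn-covariance} gives $\psi_p(\xi)\psi_q(\eta)=\psi_{p+q}(\mu_{p,q}(\xi,\eta))$; relation \textup{(1)} gives compatibility with inner products; and the induced homomorphism on the compacts satisfies $\psi^{(p)}(1_{\Hilm_p})=w_pw_p^*$. Nica covariance of $\psi$ then reads $w_pw_p^*w_qw_q^*=w_{p\vee q}w_{p\vee q}^*$, which is precisely the content of \lemref{lem:nicacovariance} applied to $w$. The universal property of $\mathcal{N}\Tt_{\Esig}$, together with \proref{prop:product_system}, supplies the homomorphism $\rho\colon\Tt_r(\NN^n,\sigma)\to B$ with $\rho(\ls_p)=w_p$.

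For the faithfulness criterion, the only-if direction is immediate, since $\prod_{i=1}^n(1-\ls_{e_i}\ls_{e_i}^*)$ is the nonzero rank-one projection onto $\CC\delta_0\subset\ell^2(\NN^n)$. For the if-direction, \cite[Theorem~5.1]{FOWLER1998171} requires $\prod_{p\in F}(1-w_pw_p^*)\neq 0$ for every finite $F\subset\NN^n\setminus\{0\}$, and the main step is to derive this from the hypothesis involving only the generators. Here I would use that Nica covariance forces the range projections $\{w_pw_p^*\}_{p\in\NN^n}$ to commute pairwise (so finite products become complements of finite suprema), and that for each nonzero $p$ there is some $i$ with $p\geq e_i$, whence \lemref{lem:nicacovariance} again yields $w_pw_p^*\leq w_{e_i}w_{e_i}^*$. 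Consequently
\[
\prod_{p\in F}(1-w_pw_p^*)=1-\bigvee_{p\in F}w_pw_p^*\geq 1-\bigvee_{i=1}^n w_{e_i}w_{e_i}^*=\prod_{i=1}^n(1-w_{e_i}w_{e_i}^*),
\]
so nonvanishing of the right-hand side forces nonvanishing of the left. The presentation statement is then automatic: $\ls$ itself satisfies relations \textup{(1)--(3)} by \lemref{lem:nicacovariance}, and the construction above shows that any other covariant isometric $\sigma$\nb-representation factors through $\Tt_r(\NN^n,\sigma)$. The main obstacle in this plan is the faithfulness reduction: transferring Fowler's condition on all finite subsets to the single product over $e_1,\ldots,e_n$, which is the step where both the commutativity of the range projections and the order relation from Nica covariance play a decisive role; the rest amounts to bookkeeping with the product system structure.
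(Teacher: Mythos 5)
Your proposal is correct and follows essentially the same route as the paper: identify $\Tt_r(\NN^n,\sigma)$ with $\mathcal{N}\Tt_{\Esig}$ via \proref{prop:product_system}, use \lemref{lem:nicacovariance} to see that relation (3) is Nica covariance, and reduce Fowler's faithfulness condition over all finite subsets of $\NN^n\setminus\{0\}$ to the single product over the generators via the domination $w_pw_p^*\leq w_{e_i}w_{e_i}^*$ for $p\geq e_i$. Your treatment of the reduction step is in fact slightly more explicit than the paper's, which leaves the commutativity of the range projections and the resulting comparison of products implicit.
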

\bp By \lemref{lem:nicacovariance}, an isometric $\sigma$\nb-representation of $\NN^n$ satisfies relation (3) of Definition~\ref{dfn-covariance} if and only if it is Nica covariant. Hence the first assertion in the statement follows from an application of the universal property of $\mathcal{N}\Tt_{\Esig}$ combined with the isomorphism $\Tt_r(\NN^n,\sigma)\cong \mathcal{N}\Tt_{\Esig}$ from Proposition~\ref{prop:product_system}.

Since $\NN^n$ is abelian, it follows from \cite[Theorem~5.1]{FOWLER1998171} that a representation of $\Tt_r(\NN^n,\sigma)$ is faithful if and only if 
\begin{equation}\label{eq:faithfulf}
\prod_{\substack{i=1}}^k(1 - w_{p_i}w_{p_i}^*)\neq 0\quad\text{ whenever }\,p_1,p_2,\ldots,p_k\in \NN^n\setminus\{0\}.
\end{equation} Now every projection of the form $1-w_{p}w_{p}^*$ with $p\neq 0$ dominates a projection $1-w_{e_i}w_{e_i}^*$ for some $i\in\{1,\ldots,n\}$, and so \eqref{eq:faithful} and \eqref{eq:faithfulf} are equivalent. This establishes the equivalence in the second statement. The last assertion then follows because the left regular $\sigma$\nb-representation of~$\NN^n$ in $\Tt_r(\NN^n,\sigma)$ satisfies relations (1)--(3) of \defref{dfn-covariance}.
\ep

The $\Cst$\nb-algebra $\Tt_r(\NN^n, \sigma)$ can also be realised as a twisted semigroup crossed product.  
Following~\cite{LACA1996415} 
we consider the characteristic function $\1_{p+\NN^n}$ of the cone with vertex $p \in \NN^n$. We define~$B_{\NN^n}$ to be the $\Cst$\nb-subalgebra of $\ell^\infty(\NN^n)$ generated by all the projections $\1_{p+\NN^n}$ for~$p \in \NN^n$. There is a canonical action $\beta=\{\beta_p \mid p\in \NN^n\}$ of~$\NN^n$ on~$B_{\NN^n}$ by injective endomorphisms with hereditary range. Precisely, the endomorphism $\beta_p$ is defined on a generator by 
$$\beta_p(\1_{q+\NN^n})\coloneqq\1_{p+q+\NN^n}.$$

\begin{proposition} Let $(B_{\NN^n},\NN^n, \beta)$ be as above and let $\sigma$ be a $2$\nb-cocycle on~$\NN^n$. Then $\Tt_r(\NN^n,\sigma)$ is isomorphic to the twisted crossed product $B_{\NN^n} \rtimes_{\beta,\sigma} \NN^n$ via an isomorphism that identifies the canonical generating isometries.
\bp The result follows from \cite[Theorem~4.3]{FOWLER1998171} since the product system $\Esig$ has one-dimensional fibres. 
\ep
\end{proposition}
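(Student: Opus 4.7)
The plan is to combine the identification $\Tt_r(\NN^n,\sigma) \cong \mathcal{N}\Tt_{\Esig}$ from \proref{prop:product_system} with a direct comparison of universal properties, which is essentially the content of Fowler's Theorem~4.3 in \cite{FOWLER1998171} specialised to product systems with one-dimensional fibres.

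First I would unpack what a covariant representation of the twisted system $(B_{\NN^n},\NN^n,\beta,\sigma)$ is: a pair $(\pi,V)$ consisting of a unital representation $\pi$ of $B_{\NN^n}$ and an isometric $\sigma$\nb-representation $V$ of $\NN^n$ satisfying $V_p \pi(a) V_p^* = \pi(\beta_p(a))$ for all $a\in B_{\NN^n}$, $p\in \NN^n$. Taking $a=1$ yields $V_p V_p^* = \pi(\1_{p+\NN^n})$, and the commutativity of $B_{\NN^n}$ together with the identity $\1_{p+\NN^n}\1_{q+\NN^n} = \1_{(p\vee q)+\NN^n}$ in $B_{\NN^n}$ forces $V_pV_p^*V_qV_q^* = V_{p\vee q}V_{p\vee q}^*$, i.e.\ $V$ is Nica-covariant in the sense of \lemref{lem:nicacovariance}. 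Hence $V$ gives rise to a homomorphism $\Tt_r(\NN^n,\sigma) \to C^*(\pi,V)$ by \corref{cor:faithful_charac}.

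Conversely, I would start with a covariant isometric $\sigma$\nb-representation $w$ of $\NN^n$ and define $\pi$ on the generators of $B_{\NN^n}$ by $\pi(\1_{p+\NN^n}) \coloneqq w_p w_p^*$, which extends to a $^*$-homomorphism of $B_{\NN^n}$ because the map $\1_{p+\NN^n}\mapsto w_pw_p^*$ respects the meet relation $\1_{p+\NN^n}\1_{q+\NN^n}=\1_{(p\vee q)+\NN^n}$ by Nica-covariance. The covariance identity $w_p \pi(a) w_p^* = \pi(\beta_p(a))$ is verified on generators using \lemref{lem:relprimecomm} (after rewriting $\1_{q+\NN^n} = \pi(\beta_q(1))$) and the cocycle relations. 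Together with $w_pw_q = \sigma(p,q)w_{p+q}$, this shows that $(\pi,w)$ is a covariant representation of the twisted system.

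This sets up a bijection between Nica\nb-covariant isometric $\sigma$\nb-representations of $\NN^n$ and covariant representations of $(B_{\NN^n},\NN^n,\beta,\sigma)$. Matching universal properties then produces a surjective homomorphism $B_{\NN^n}\rtimes_{\beta,\sigma}\NN^n \to \Tt_r(\NN^n,\sigma)$ sending the canonical generating isometry of the crossed product to $\ls_p$. The main obstacle is faithfulness: the left regular $\sigma$\nb-representation gives a covariant pair on $\ell^2(\NN^n)$ satisfying the nondegeneracy condition \eqref{eq:faithful} of \corref{cor:faithful_charac}, and one then needs the corresponding faithfulness criterion for the twisted semigroup crossed product (this is where the one-dimensional fibre hypothesis of Fowler's Theorem~4.3 is essential, ensuring no further relations survive in the crossed product). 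Once this is in place, the map identifies the canonical generating isometries on both sides, proving the proposition.
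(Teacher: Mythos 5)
Your argument is correct, and it is in substance the same argument as the paper's: the paper disposes of the proposition in one line by citing Fowler--Raeburn's Theorem~4.3 (via \proref{prop:product_system} and the fact that $\Esig$ has one-dimensional fibres), and what you have written is precisely the mechanism behind that theorem in this special case, namely the bijection between covariant representations of the twisted system $(B_{\NN^n},\NN^n,\beta,\sigma)$ and Nica-covariant isometric $\sigma$\nb-representations of $\NN^n$, obtained from $V_pV_p^*=\pi(\1_{p+\NN^n})$ in one direction and $\pi(\1_{p+\NN^n})\coloneqq w_pw_p^*$ in the other. The one place where your write-up is heavier than necessary is the injectivity step: you do not need a separate faithfulness criterion for the twisted crossed product. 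Since \corref{cor:faithful_charac} already records that relations (1)--(3) are a presentation of $\Tt_r(\NN^n,\sigma)$, and since the canonical isometries $i_{\NN^n}(p)$ in $B_{\NN^n}\rtimes_{\beta,\sigma}\NN^n$ form a Nica-covariant isometric $\sigma$\nb-representation (by the forward direction of your bijection), the universal properties give homomorphisms in both directions that are mutually inverse on generators; note also that the crossed product is generated by its isometries alone because $i_{B_{\NN^n}}(\1_{p+\NN^n})=i_{\NN^n}(p)i_{\NN^n}(p)^*$. This closes the argument without appealing to the left regular representation's faithfulness on the crossed product, which is the only point you left as something one ``needs''.
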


In analogy to what happens with the semigroup $\Cst$\nb-algebra $\Toepr(\NN^n)$ (see, for example, \cite{Nica:Wiener--hopf_operators}), there is a canonical  \emph{gauge action} $\gamma$ of~$\TT^n$ on~$\Tt_r(\NN^n,\sigma)$. It is implemented spatially by the unitaries $\{U_z: z\in \TT^n\}$ defined by $U_z \delta_p = z^p \delta_p$ for $p\in\NN^n$, where $z^p=\prod_{\substack{i=1}}^nz_i^{p_i}$. The gauge action $z\mapsto \gamma_z\coloneqq \operatorname{Ad}_{U_z}$  can also be obtained through the relations \ref{dfn:cov-relation1}--\ref{dfn:cov-relation3}
by stating that $\gamma_z$ is the automorphism that sends an isometry~$\ls_p$ to~$z^p\ls_p$. This action gives rise to a faithful conditional expectation $$E\colon \Tt_r(\NN^n,\sigma)\to\Tt_r(\NN^n,\sigma)^{\gamma}=\overline{\mathrm{span}}\{\ls_p(\ls_p)^*\mid p\in\NN^n\}$$ given by $E(b)=\int_{\substack{\TT^n}}\gamma_\lambda(b) d\lambda$
and $\Tt_r(\NN^n,\sigma)$ is the closed linear span of the set $\{\ls_p(\ls_q)^*\colon p,q\in\NN^n\}$. 

The Toeplitz noncommutative torus $\Tt_r(\NN^n,\sigma)$ has a canonical $n$\nb-dimensional noncommutative torus as a quotient. We show this in the next result, where we also  identify the canonical generating set for the kernel of the quotient map.

 \begin{proposition}\label{pro:toeplitz-extension} Every $2$\nb-cocycle $\sigma\in Z^2(\NN^n ,\TT)$  has an extension to a 
$2$\nb-cocycle $\tilde\sigma$ on~$\ZZ^n$, and this extension is unique up to coboundaries. Moreover, the map that sends an isometry $L_p^{\sigma}\in\Tt_r(\NN^n,\sigma)$ to the unitary $\lambda_p^{\tilde\sigma}\in \Cst_r(\ZZ^n,\tilde\sigma)$ determines an exact sequence 
\[
 0 \too \mathcal{I} \too \Tt_r(\NN^n,\sigma) \too \Cst_r(\ZZ^n,\tilde\sigma) \too 0,
\]
where $\mathcal{I}$ is the ideal of $\Tt_r(\NN^n,\sigma)$ generated by the projections $$\{1-\ls_{e_j}(\ls_{e_j})^*\mid j=1,\ldots,n\}.$$ 
 \end{proposition}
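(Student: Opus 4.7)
The first statement is essentially the content of \cite[Corollary 2.3]{LR1}, already cited in the introduction: restriction induces an isomorphism $H^2(\ZZ^n,\TT) \to H^2(\NN^n,\TT)$, so any $\sigma \in Z^2(\NN^n,\TT)$ lifts to some $\tilde\sigma \in Z^2(\ZZ^n,\TT)$, uniquely up to coboundaries. I fix one such extension $\tilde\sigma$ for the rest of the proof.

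To build the quotient map $\pi\colon \Tt_r(\NN^n,\sigma)\to\Cst_r(\ZZ^n,\tilde\sigma)$ I apply \corref{cor:faithful_charac} to the restriction of the left regular $\tilde\sigma$-representation $\lambda^{\tilde\sigma}$ of $\ZZ^n$ to $\NN^n$. Since $\tilde\sigma|_{\NN^n\times\NN^n}=\sigma$, the operators $\lambda^{\tilde\sigma}_p$ ($p\in\NN^n$) are unitaries satisfying $\lambda^{\tilde\sigma}_p\lambda^{\tilde\sigma}_q=\sigma(p,q)\lambda^{\tilde\sigma}_{p+q}$; being unitaries, their range projections are all $1$, so Nica-covariance---equivalently condition (3) of \defref{dfn-covariance} by \lemref{lem:nicacovariance}---is trivially satisfied. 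Thus $\pi$ exists and sends $\ls_p$ to $\lambda^{\tilde\sigma}_p$. Surjectivity of $\pi$ is immediate from $\lambda^{\tilde\sigma}_p\lambda^{\tilde\sigma}_{-p}=\tilde\sigma(p,-p)\,1$, which expresses every $\lambda^{\tilde\sigma}_{-p}$ ($p\in\NN^n$) as a scalar multiple of $(\lambda^{\tilde\sigma}_p)^*$, so every $\lambda^{\tilde\sigma}_x$ with $x\in\ZZ^n$ lies in the image.

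The containment $\mathcal{I}\subseteq \ker(\pi)$ is clear since $\pi(\ls_{e_j}\ls_{e_j}^*)=1$. The reverse inclusion will follow once I construct a homomorphism $\rho\colon\Cst_r(\ZZ^n,\tilde\sigma)\to\Tt_r(\NN^n,\sigma)/\mathcal{I}$ that inverts the map induced by $\pi$ on the quotient. In $\Tt_r(\NN^n,\sigma)/\mathcal{I}$ each $[\ls_{e_j}]$ is unitary, and relation (2) of \defref{dfn-covariance} together with an easy induction shows that $[\ls_p]$ is unitary for every $p\in\NN^n$. For $x\in\ZZ^n$ with disjoint-support decomposition $x=x^+ - x^-$ I set
\[
v_x \coloneqq c(x)\,[\ls_{x^+}]\,[\ls_{x^-}]^{*},
\]
where $c(x)\in\TT$ is chosen so that $\{v_x\}_{x\in\ZZ^n}$ satisfies $v_xv_y=\tilde\sigma(x,y)v_{x+y}$. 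Because $\ZZ^n$ is amenable, $\Cst_r(\ZZ^n,\tilde\sigma)$ is the universal $\Cst$-algebra for $\tilde\sigma$-unitary representations of $\ZZ^n$, so $x\mapsto v_x$ induces the desired $\rho$; by construction $\rho$ sends $\lambda^{\tilde\sigma}_p\mapsto [\ls_p]$ for $p\in\NN^n$, which is an inverse on a generating set, giving $\ker\pi=\mathcal{I}$.

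The main technical step is the existence of a consistent phase function $c\colon\ZZ^n\to\TT$ for which $v$ is a $\tilde\sigma$-unitary representation. This is a direct but slightly tedious bookkeeping exercise with the cocycle identity of $\tilde\sigma$, greatly simplified in the quotient because all range projections $[\ls_p][\ls_p]^{*}$ collapse to $1$: only cocycle factors survive, with no Nica-covariant rearrangements to track. An alternative would be to bypass the explicit $c(x)$ by observing that the quotient $\Tt_r(\NN^n,\sigma)/\mathcal{I}$ is visibly the universal $\Cst$-algebra for $\sigma$-unitary representations of $\NN^n$, and then invoking the same cocycle-extension machinery from \cite{LR1} used for the first statement to identify this universal object with $\Cst(\ZZ^n,\tilde\sigma)=\Cst_r(\ZZ^n,\tilde\sigma)$.
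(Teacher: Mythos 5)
Your proof is correct, but for the key point---injectivity of the induced map $\dot\phi\colon\Tt_r(\NN^n,\sigma)/\mathcal{I}\to\Cst_r(\ZZ^n,\tilde\sigma)$---it takes a genuinely different route from the paper. The paper stays inside the product-system picture: it checks that the canonical representation of $\Esig$ in the quotient is Cuntz--Pimsner covariant, uses \cite[Lemma~3.11]{sehnem2021} to show that the gauge fixed-point algebra of the quotient is a copy of $\CC$, and then applies gauge-invariant uniqueness \cite[Proposition~19.8]{Exel:Partial_dynamical}, with amenability of $\ZZ^n$ entering only through faithfulness of the conditional expectation. You instead build an explicit inverse from the universal property of $\Cst(\ZZ^n,\tilde\sigma)=\Cst_r(\ZZ^n,\tilde\sigma)$ (amenability again), which is more elementary and exhibits the isomorphism on generators, but it puts all the weight on the phase function $c$, which you assert rather than construct. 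That step does go through, and you should write it out: since each $[\ls_p]$ is unitary in the quotient and condition (3) of \defref{dfn-covariance} survives there, the operators $u_x\coloneqq[\ls_{x^+}][\ls_{x^-}]^*$ satisfy $u_xu_y=\omega(x,y)u_{x+y}$ for a scalar $\omega(x,y)\in\TT$ (move $[\ls_{x^-}]^*$ past $[\ls_{y^+}]$ using (3) and cancel the now-trivial range projections); associativity forces $\omega$ to be a $2$\nb-cocycle on $\ZZ^n$, which visibly restricts to $\sigma$ on $\NN^n$. By the uniqueness clause of the first statement, $\omega\overline{\tilde\sigma}$ is the coboundary of some $\lambda\colon\ZZ^n\to\TT$; triviality of this coboundary on $\NN^n\times\NN^n$ makes $\lambda|_{\NN^n}$ a character, which extends to a character of $\ZZ^n$, and after dividing by that extension you may assume $\lambda|_{\NN^n}\equiv1$ and set $c(x)=\overline{\lambda(x)}$. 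Your alternative ending---observing that $\Tt_r(\NN^n,\sigma)/\mathcal{I}$ is universal for unitary $\sigma$\nb-representations of $\NN^n$ (Nica covariance is automatic for unitaries by \lemref{lem:nicacovariance}) and quoting the extension machinery of \cite{LR1}---is equally sound and is arguably the cleanest way to package the same computation. What the paper's argument buys is uniformity with the rest of \secref{ToepExtNoncommTori} and no cocycle bookkeeping; what yours buys is a concrete inverse and independence from the product-system/Fell-bundle machinery.
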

 \begin{proof}  Since $\NN^n$ is abelian and  $\ZZ^n = \NN^n - \NN^n$ is the group of right (and left) quotients of $\NN^n$,
 $2$\nb-cocycles on~$\NN^n$ extend  to $2$\nb-cocycles on $\ZZ^n$, uniquely up to coboundaries, by \cite[Corollary~2.3]{LR1}. More precisely, the restriction of $2$\nb-cocycles from $\ZZ^n$ to~$\NN^n$ induces an isomorphism of the second cohomology group $H^2(\ZZ^n,\TT)$ onto $H^2(\NN^n,\TT)$.

Let $\sigma$ be a $2$\nb-cocycle on $\NN^n$ and let $\tilde{\sigma}$ be a $2$\nb-cocycle on $\ZZ^n$ extending~$\sigma$.  It follows that the map that sends $p\in\NN^n$ to the unitary $\lambda_p^{\tilde\sigma}\in \Cst_r(\ZZ^n,\tilde\sigma)$ is a covariant isometric $\sigma$\nb-representation of~$\NN^n$. This gives a  homomorphism $\phi\colon\Tt_r(\NN^n,\sigma)\to \Cst_r(\ZZ^n,\tilde\sigma)$ mapping $\ls_p$ to $\lambda_p^{\tilde\sigma}$, which is surjective because the unitaries $\lambda_p^{\tilde\sigma}$ for $p\in \NN^n$ generate $\Cst_r(\ZZ^n,\tilde\sigma)$ as a $\Cst$\nb-algebra. 

Let $\mathcal{I}$ be the ideal of $\Tt_r(\NN^n,\sigma)$ generated by the projections $1-\ls_{e_j}(\ls_{e_j})^*$ for $j=1,\ldots, n$. Then $\phi$ vanishes on $\mathcal{I}$ since it vanishes on the projection $1-\ls_{e_j}(\ls_{e_j})^*$ for each $j\in\{1,\ldots, n\}$.  Hence it factors through a  homomorphism $$\dot{\phi}\colon \Tt_r(\NN^n,\sigma)/\Hilm[I]\to \Cst_r(\ZZ^n,\tilde\sigma).$$ So it remains to show that $\dot{\phi}$ is injective. To do so, we invoke the description of $\Tt_r(\NN^n,\sigma)$ as the Nica--Toeplitz algebra of the product system $\Esig$ arising from Proposition~\ref{prop:product_system}.                                The quotient map $\dot{\phi}$ gives a representation of $\Esig$ in 
$\Cst_r(\ZZ^n,\tilde\sigma)$. In this case $\Esig$ is a product system of Hilbert bimodules since $\Comp(\Hilm_p)=\CC$ for all $p\in\NN^n$. We claim that the canonical representation of $\Esig$ in the quotient $\Tt_r(\NN^n,\sigma)/\Hilm[I]$ is Cuntz--Pimsner covariant on $\BRAKET{\Hilm_p}{\Hilm_p}=\CC$ for all $p\in \NN^n$, where $\BRAKET{\cdot}{\cdot}$ stands for the left inner product of a Hilbert bimodule. Indeed, it suffices to show that for all $p\in P$ the image of the projection $\ls_p$ under the quotient map is a unitary. We can write $p=\sum_{\substack{j=1}}^np_je_j,$ so that an application of the Nica covariance relation in $\Tt_r(\NN^n,\sigma)$ yields \begin{equation}\label{eq:n-covariance}
\ls_p(\ls_p)^*=\prod_{\substack{j=1}}^n\ls_{p_je_j}(\ls_{p_je_j})^*.
\end{equation} Now each isometry $\ls_{p_je_j}$ has the form $a_j(\ls_{e_j})^{p_j}$ for some scalar $a_j\in \CC$ of modulus~$1$. So the image of $\ls_{p_je_j}$ in $\Tt_r(\NN^n,\sigma)/\Hilm[I]$ is a unitary for all $j=1,2,\ldots,n$. Hence so is the image of $\ls_p$ by \eqref{eq:n-covariance}. This proves the claim. 

Next we observe that $\Esig$ is simplifiable in the sense of  \cite[Definition~3.7]{sehnem2021} since for all $p,q\in\NN^n$ one has $$\BRAKET{\Hilm_p}{\Hilm_p}\BRAKET{\Hilm_q}{\Hilm_q}=\CC=\BRAKET{\Hilm_{p\vee q}}{\Hilm_{p\vee q}}.$$ 
Since the canonical representation of $\Esig$ in the quotient $\Tt_r(\NN^n,\sigma)/\Hilm[I]$ is Cuntz--Pimsner covariant on $\BRAKET{\Hilm_p}{\Hilm_p}$ for all $p\in\NN^n$, it follows from \cite[Lemma~3.11]{sehnem2021} that the fixed-point algebra of the canonical gauge action of $\TT^n$ on $\Tt_r(\NN^n,\sigma)/\Hilm[I]$ is a copy of~$\CC$. The induced conditional expectation onto this copy is faithful because $\ZZ^n$ is amenable. Now the  homomorphism $\dot{\phi}\colon \Tt_r(\NN^n,\sigma)/\Hilm[I]\to \Cst_r(\ZZ^n,\tilde\sigma)$ is gauge-compatible and clearly injective onto the fixed-point algebra~$\CC$, being a nonzero representation.  Hence $\dot{\phi}$ is an isomorphism by \cite[Proposition~19.8]{Exel:Partial_dynamical}, completing the proof of the proposition.
\end{proof}

\begin{remark} It may be helpful  to compare the conclusion of Proposition~\ref{pro:toeplitz-extension} with the analogous result in the theory of (untwisted) semigroup $\Cst$\nb-algebras. When $\sigma$ is the trivial cocycle, that is, $\sigma(p,q)=1$ for all $p,q\in \NN^n$, $\Tt_r(\NN^n,\sigma)$ is simply the Toeplitz $\Cst$\nb-algebra of $\NN^n$. Proposition~\ref{pro:toeplitz-extension} gives a presentation for the (reduced) group $\Cst$\nb-algebra $\Cst_r(\ZZ^n)$. This presentation may be obtained as an application of \cite[Theorem~6.7]{Crisp-Laca} given that $\Cst_r(\ZZ^n)$ is the boundary quotient $\partial\Toepr(\NN^n)$ of $\Toepr(\NN^n)$. 
\end{remark}

\subsection{ Symplectic bicharacters and antisymmetric matrices} 
The second cohomology group of a discrete abelian group~$G$ has a convenient parametrisation in terms of a specific subgroup of cocycles. Let $\sigma^*(x,y) \coloneqq \overline{\sigma(y,x)}$ for $x,y\in G$ and recall that 
a $2$\nb-cocycle~$\sigma$ on~$G$ is a {\em symplectic bicharacter} if it is a bicharacter as a function $\sigma\colon G\times G\to \TT$ and satisfies $\sigma^* = \sigma$. Denote by
 $X^2(G,\TT)$ the group of symplectic bicharacters on~$G$.  
By~\cite[Proposition~3.2]{OPT}, the homomorphism $\sigma\mapsto\sigma\sigma^*$ of  $Z^2( G,\TT)$ to $X^2(G,\TT)$ is surjective and has kernel precisely $B^2(G,\TT)$, so it induces an isomorphism of $H^2(G,\TT)$ onto $X^2(G,\TT)$.

The symplectic bicharacters on~$\ZZ^n$ are precisely the $2$\nb-cocycles associated to $n\times n$ antisymmetric matrices over~$\RR$.  For each $n\times n$ antisymmetric matrix $\Theta=(\theta_{i,j})$ over~$\RR$, 
there is an associated symplectic bicharacter $\sigma_\Theta$ on $\ZZ^n$ given by
\[
 \sigma_\Theta(x,y)= e^{-\pi i \braket{x}{\Theta y}} \qquad\qquad (x,y\in\ZZ^n),
 \]
 and the map $\Theta \mapsto \sigma_\Theta$ is a group homomorphism.
To retrieve the matrix from a given symplectic bicharacter $\sigma$, simply let $e^{\pi i\theta_{j,k}} \coloneqq \sigma(e_k,e_j)$ for $1\leq j\leq k\leq n$, which determines $\theta_{j,k}$ up to an additive {even} integer.

Recall that, at the level of twisted group algebras, $\Cst(\ZZ^n,\sigma_\Theta)$ is canonically isomorphic to the noncommutative torus $\A_\Theta$. Indeed, if $x\mapsto v_x$ is the canonical unitary $\sigma_\Theta$\nb-representation of~$\ZZ^n$ in $\Cst(\ZZ^n,\sigma_\Theta)$,  then the generators $v_{e_j}$ for $j = 1, 2, \cdots, n$ satisfy the defining commutation relations of $\A_\Theta$. Conversely,  if  $U_1, U_2,\ldots , U_n$ are the canonical unitary generators of $\A_\Theta$, and for each  $x=(x_1,x_2,\ldots,x_n)\in\ZZ^n$   we define
\begin{equation}\label{eqn:VintermsofU}
\bar{v}_x \coloneqq  e^{-\pi i \braket{x-x_1e_1}{\Theta x_1e_1 }}e^{-\pi i \braket{x-x_1e_1-x_2e_2}{\Theta x_2e_2}}\ldots e^{-\pi i \braket{x_ne_n}{\Theta x_{n-1} e_{n-1}}}U_{1}^{x_1}U_{2}^{x_2}\ldots U_{n}^{x_n},
\end{equation} 
then $x\mapsto \bar{v}_x$ is a unitary $\sigma_\Theta$-representation of $\ZZ^n$ in $\A_\Theta$.

Next we show that, in analogy to what happens for $\A_\Theta$, the Toeplitz noncommutative torus $\Tt_r(\NN^n,\sigma_\Theta)$ can be characterised as the universal $\Cst$\nb-algebra generated by~$n$ isometries subject to certain commutation relations.
  \begin{proposition}\label{pro:presentationtoeplitztori} 
Let $\Theta$ be an $n \times n$  antisymmetric real matrix and let  $ \sigma_\Theta(p,q)= e^{-\pi i \braket{p}{\Theta q}} $ for $p,q\in\NN^n$ be the associated symplectic bicharacter. Let $\{w_j\mid j\in\{1,\ldots, n\}\}$ be isometries in a $\Cst$\nb-algebra $B$ satisfying the relations
\begin{equation}
\tag{$\mathcal R_\Theta $} 
\begin{cases}w_{j}w_{k}=e^{-2\pi i\theta_{j,k}}w_{k}w_{j} & j, k = 1, 2, \ldots , n; \\
w_{j}^* w_{k}= e^{2\pi i\theta_{j,k}}w_{k}w_{j}^* &  j\neq k.\\
\end{cases}
\end{equation}
Then the map that sends $L^{\sigma_\Theta}_{e_j}$ to $w_j$  induces a homomorphism from $\Tt_r(\NN^n,\sigma_\Theta)$ to~$B$ that maps an isometry  $L^{\sigma_\Theta}_p$ to the product
\begin{equation}\label{eq:ext-formula}
 \sigma_\Theta( p-p_1e_1,e_1)^{p_1}\sigma_\Theta(p-p_2e_2-p_1e_1,e_2)^{p_2}\cdots\sigma_\Theta\left(p-\textstyle{\sum_{\substack{i=1}}^{n-1}}p_ie_i,e_{n-1}\right)^{p_{n-1}}w_{1}^{p_1}w_{2}^{p_2}\ldots w_{n}^{p_n}.
\end{equation}
This homomorphism is an isomorphism if and only if
$\prod_{\substack{j=1}}^n(1 - w_{j}w_{j}^*)\neq 0$.
 \end{proposition}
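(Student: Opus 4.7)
The plan is to define, for each $p=(p_1,\ldots,p_n)\in \NN^n$, an operator $W_p\in B$ as the right-hand side of \eqref{eq:ext-formula}, and then verify that $p\mapsto W_p$ is a covariant isometric $\sigma_\Theta$-representation of $\NN^n$ in the sense of \defref{dfn-covariance}. Evaluating the formula at $p=e_j$ shows that $W_{e_j}=w_j$ (all prefactors reduce to $1$), so once the three conditions are checked, \corref{cor:faithful_charac} delivers the desired homomorphism $\Tt_r(\NN^n,\sigma_\Theta)\to B$ sending $L^{\sigma_\Theta}_{e_j}$ to $w_j$ (and more generally $L^{\sigma_\Theta}_p$ to $W_p$), together with the faithfulness criterion $\prod_{j=1}^n(1-w_jw_j^*)\neq 0$.

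Conditions (1) and (3) of \defref{dfn-covariance} should be the easy ones. For (1), $W_p^*W_p$ equals the modulus-one prefactor times $w_n^{*p_n}\cdots w_1^{*p_1}w_1^{p_1}\cdots w_n^{p_n}$, which telescopes to $1$ by innermost-outward cancellation $w_j^{*p_j}w_j^{p_j}=1$. For (3), the two relations in $\mathcal R_\Theta$ together force the range projection $w_jw_j^*$ to commute with every $w_k$ for $k\neq j$: indeed $w_jw_j^*w_k = e^{2\pi i\theta_{j,k}}w_jw_kw_j^* = e^{2\pi i\theta_{j,k}}\cdot e^{-2\pi i\theta_{j,k}} w_kw_jw_j^* = w_kw_jw_j^*$. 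Using this commutativity to rearrange factors gives $W_pW_p^* = \prod_{j=1}^n w_j^{p_j}(w_j^{p_j})^*$, and combining with the single-generator identity $w_j^a(w_j^a)^*w_j^b(w_j^b)^* = w_j^{\max(a,b)}(w_j^{\max(a,b)})^*$ yields $W_pW_p^*W_qW_q^* = W_{p\vee q}W_{p\vee q}^*$, which via \lemref{lem:nicacovariance} is equivalent to condition (3).

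The main obstacle is condition (2), $W_pW_q = \sigma_\Theta(p,q)W_{p+q}$. Reordering $w_1^{p_1}\cdots w_n^{p_n}w_1^{q_1}\cdots w_n^{q_n}$ into the ascending form $w_1^{p_1+q_1}\cdots w_n^{p_n+q_n}$ by repeated use of $w_jw_k = e^{-2\pi i\theta_{j,k}}w_kw_j$ accumulates a scalar factor $\exp(-2\pi i\sum_{j>k}\theta_{j,k}p_jq_k)$; this must combine with the ratio of prefactors from \eqref{eq:ext-formula} to produce exactly $\sigma_\Theta(p,q) = e^{-\pi i\braket{p}{\Theta q}}$. The cleanest route I see is induction on $|q|$, reducing to the case $q=e_k$: here only a single $w_k$ needs to be moved past $w_{k+1}^{p_{k+1}}\cdots w_n^{p_n}$, and the telescoping structure of the prefactors in \eqref{eq:ext-formula}, together with the cocycle and bicharacter identities for $\sigma_\Theta$, makes them collapse into $\sigma_\Theta(p,e_k)$. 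A useful sanity check is that precisely the same algebraic manipulation is valid for the left regular representation $L^{\sigma_\Theta}$ (which also satisfies $\mathcal R_\Theta$), and there $L^{\sigma_\Theta}_pL^{\sigma_\Theta}_{e_k} = \sigma_\Theta(p,e_k)L^{\sigma_\Theta}_{p+e_k}$ holds by construction; since only $\mathcal R_\Theta$ is invoked in the derivation, the identity is forced in $B$. With all three conditions verified, \corref{cor:faithful_charac} simultaneously supplies the homomorphism and its characterisation of injectivity.
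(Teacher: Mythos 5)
Your proposal is correct and follows the same route as the paper's proof: verify that the products in \eqref{eq:ext-formula} define a covariant isometric $\sigma_\Theta$\nb-representation in the sense of \defref{dfn-covariance} and then invoke \corref{cor:faithful_charac} for both the existence of the homomorphism and the injectivity criterion. The paper's argument is simply a terser version of this verification, and your device of transferring the scalar bookkeeping in condition (2) from the left regular representation (after noting that \eqref{eq:ext-formula} holds for $L^{\sigma_\Theta}$ itself) is a legitimate shortcut for the same computation.
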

\bp The first relation in ($\mathcal{R}_\Theta$) implies that the map from $\NN^n$ to $B$ given by \eqref{eq:ext-formula} is an isometric $\sigma_\Theta$\nb-representation of $\NN^n$. The second relation in ($\mathcal{R}_\Theta$) implies that such a representation satisfies condition \ref{dfn:cov-relation3} of  Definition~\ref{dfn-covariance}, and hence is indeed a covariant $\sigma_\Theta$\nb-representation. 
The  homomorphism from $\Tt_r(\NN^n,\sigma_\Theta)$ to~$B$ and the condition for it to be an isomorphism now follow from Corollary~\ref{cor:faithful_charac}. 
\ep

In \cite{AaHRS}, Afsar, an Huef, Raeburn, and Sims  consider certain Toeplitz  extensions of noncommutative tori and use them as building blocks to construct their Toeplitz noncommutative solenoids. We will see below that these building blocks are quotients of special cases of our Toeplitz noncommutative tori.  

Let~$k$ and~$d$ be nonnegative integers with $n = k+d$ and let $\Lambda\in M_{k,d}([0,\infty))$ be a $k\times d$ matrix. The $\Cst$\nb-algebra~$B_{\Lambda}$ considered in~\cite{AaHRS} is the universal $\Cst$\nb-algebra generated by a Nica covariant representation~$v\colon \NN^k\to B_{\Lambda}$ of~$\NN^k$ and a unitary representation $u\colon \ZZ^d\to B_{\Lambda}$ of~$\ZZ^d$, satisfying the commuting relations $$u_xv_p=e^{2\pi i \braket{p}{\Lambda x}}v_pu_x,\qquad x\in\ZZ^d, p\in \NN^k.$$

\begin{proposition}\label{pro:AaHRStoeptori}
Suppose $k$ and $d$ are nonnegative integers with $n = k+d$. For each rectangular $k\times d$ matrix $\Lambda\in M_{k,d}([0,\infty))$ define $\Theta\in M_{k+d}(\RR)$ by
\begin{equation*}\label{LambdafromAaHRS}
\Theta\coloneqq
\left[
\begin{array}{c|c}
0_{k\times k} & \Lambda \\
\hline
-\Lambda^T & 0_{d\times d}
\end{array}
\right].
\end{equation*}
Then the $\Cst$\nb-algebra $B_\Lambda$ associated to $\Lambda$ in \cite{AaHRS} is canonically isomorphic to the quotient of $\Tt_r(\NN^n, \sigma_{\Theta})$ by the ideal generated by the projections $1- L^{\sigma_\Theta}_{e_j}(L^{\sigma_\Theta}_{e_j})^*$ for $j = k+1, k+2, \ldots, k+d$. 
\end{proposition}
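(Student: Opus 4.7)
My plan is to apply \proref{pro:presentationtoeplitztori} to present $\Tt_r(\NN^n,\sigma_\Theta)$ by generators and relations, then realise the claimed isomorphism by constructing mutually inverse homomorphisms using the universal properties on both sides. The block structure of $\Theta$ means that $\theta_{j,j'}=0$ whenever $j$ and $j'$ lie in the same block (both $\le k$, or both $>k$), while $\theta_{j,k+l}=\Lambda_{j,l}$ for $1\le j\le k$ and $1\le l\le d$. Writing $Q$ for the quotient appearing in the statement and $\bar w_j$ for the image of $L^{\sigma_\Theta}_{e_j}$ in $Q$, the elements $\bar w_{k+1},\ldots,\bar w_{k+d}$ are unitaries by construction.

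In one direction I would build a homomorphism $\beta\colon B_\Lambda\to Q$ using the universal property of $B_\Lambda$. The relations $\mathcal R_\Theta$ restricted to indices in $\{1,\ldots,k\}$ reduce to commutation and $^*$\nb-commutation, since the corresponding $\theta_{j,j'}$ vanish, so $\bar w_1,\ldots,\bar w_k$ determine a Nica-covariant isometric representation $\tilde v\colon \NN^k\to Q$ via $\tilde v_p=\bar w_1^{p_1}\cdots\bar w_k^{p_k}$. Likewise, the commuting unitaries $\bar w_{k+1},\ldots,\bar w_{k+d}$ give a unitary representation $\tilde u\colon \ZZ^d\to Q$ via $\tilde u_x=\bar w_{k+1}^{x_1}\cdots\bar w_{k+d}^{x_d}$. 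Iterating the mixed relation $\bar w_{k+l}\bar w_j=e^{2\pi i\Lambda_{j,l}}\bar w_j\bar w_{k+l}$ over the coordinates of $x$ and $p$ then yields $\tilde u_x\tilde v_p=e^{2\pi i\braket{p}{\Lambda x}}\tilde v_p\tilde u_x$, so $(\tilde v,\tilde u)$ satisfies the defining relations of $B_\Lambda$ and the universal property supplies $\beta$.

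For the reverse direction, I would take $v_{e_j}$ for $1\le j\le k$ and $u_{e_l}$ for $1\le l\le d$ as candidate images in $B_\Lambda$ of $L^{\sigma_\Theta}_{e_1},\ldots,L^{\sigma_\Theta}_{e_n}$ and verify the relations $\mathcal R_\Theta$. The first relation amounts to commutativity within each block together with the AaHRS commutation across blocks; the second is the Nica $^*$\nb-commutation for the $v$-block, trivial for the $u$-block, and, in the mixed case, is deduced from $u_{e_l}v_{e_j}=e^{2\pi i\Lambda_{j,l}}v_{e_j}u_{e_l}$ by conjugating both sides by $u_{e_l}^*$ (which exists because $u_{e_l}$ is unitary) and taking adjoints. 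Then \proref{pro:presentationtoeplitztori} produces a homomorphism $\alpha\colon \Tt_r(\NN^n,\sigma_\Theta)\to B_\Lambda$ that sends each $1-L^{\sigma_\Theta}_{e_{k+l}}(L^{\sigma_\Theta}_{e_{k+l}})^*$ to zero, hence factors through $\bar\alpha\colon Q\to B_\Lambda$. Since $\bar\alpha$ and $\beta$ agree with the identity on the generators, they are mutual inverses. The only point requiring real care is that the $^*$\nb-commutation in $\mathcal R_\Theta$ for mixed indices is not among the defining relations of $B_\Lambda$; it becomes available only after $u_{e_l}$ is invertible, which is precisely why the quotient by the ideal in the statement is needed for the comparison to work.
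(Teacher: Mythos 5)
Your proposal is correct and follows essentially the same route as the paper: one direction via the universal property of $B_\Lambda$ applied to the images of the generators in the quotient, the other via the presentation $(\mathcal R_\Theta)$ from \proref{pro:presentationtoeplitztori}, with the key observation in both cases being that the mixed $^*$\nb-commutation relation is recovered from the $B_\Lambda$ commutation relation once the $u$\nb-generators are unitary. No substantive differences to report.
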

\begin{proof} Let $\Hilm[I]_d$ be the ideal of $\Tt_r(\NN^n, \sigma_{\Theta})$ generated by the projections $$\{1- L^{\sigma_\Theta}_{e_j}(L^{\sigma_\Theta}_{e_j})^*\mid j = k+1, k+2, \ldots, k+d\}.$$ For $p\in\NN^n\setminus 0_k\times\NN^d$, we write $\bar{v}_p$ for the image of the isometry~$L^{\sigma_\Theta}_p$ in the quotient $\Tt_r(\NN^n, \sigma_{\Theta})/\Hilm[I]_d$. Then the isometries $\{\bar{v}_p\mid p\in\NN^k\times 0_d\}$ commute among themselves because the upper diagonal block of $\Theta$
consists of zeros. Also, the representation $p\in\NN^k\cong\NN^k\times 0_d\mapsto \bar{v}_p$ is Nica covariant by Proposition~\ref{prop:product_system}. If $j\in\{k+1,\ldots, k+d\}$, let $\bar{u}_j $ denote the image of~$L^{\sigma_\Theta}_{e_j}$ under the quotient map. Thus $\{\bar{u}_{j}\mid  j=k+1,\ldots,k+d\}$ is a set of commuting unitaries.  Hence there is a unique unitary representation $\bar{u}\colon \ZZ^d\cong 0_k\times\ZZ^d\to \Tt_r(\NN^n, \sigma_{\Theta})/\Hilm[I]_d$ mapping a canonical generator $e_{j}\in 0_k\times\ZZ^d$ to $\bar{u}_j$, $j=k+1, k+2, \ldots k+d$. In addition, we have for $p\in\NN^k\times 0_d$ and $j = k+1, k+2, \ldots, k+d$, $$\bar{u}_j\bar{v}_p=e^{-\pi i \braket{e_j}{ \Theta p}}\bar{v}_{e_j+p}= e^{-2\pi i \braket{e_j}{ \Theta p}}\bar{v}_p\bar{u}_j= e^{2\pi i \braket{p}{\Lambda e_j}}\bar{v}_p\bar{u}_j.$$ This guarantees that the isometries $\{\bar{v}_p\mid p\in\NN^k\times 0_d\}$ and the unitaries $\{\bar{u}_x\mid x\in 0_k\times \ZZ^d\}$ satisfy the commuting relations required in~\cite[Section 2]{AaHRS}. We then get a  homomorphism from $B_{\Lambda}$ onto $\Tt_r(\NN^n, \sigma_{\Theta})/\Hilm[I]_d$ induced by the maps $\NN^k\times 0_d\ni p\mapsto \bar{v}_p$, $0_k\times\ZZ^d\ni x \mapsto \bar{u}_x$. For the inverse map, we use the description of $\Tt_r(\NN^n, \sigma_{\Theta})$ as a universal $\Cst$\nb-algebra generated by~$n$ isometries satisfying the relations ($\mathcal{R}_\Theta$) given in Proposition~\ref{pro:presentationtoeplitztori}. Notice that the second relation in $(\mathcal{R}_\Theta)$ is satisfied because $u_{e_j}v_{e_i}=e^{-2\pi i \braket{e_j}{\Theta e_i}}v_{e_i}u_{e_j}$ in $B_\Lambda$ implies $$u_{e_j}^*v_{e_i}=e^{2\pi i \braket{e_j}{\Theta e_i}}v_{e_i}u_{e_j}^*.$$ Thus the map that sends $L^{\sigma_\Theta}_{e_i}$ to $v_{e_i}$ for $i=1,\ldots, k$ and $L^{\sigma_\Theta}_{e_j}$ to $u_{e_{j-k}}$ for $j=k+1,\ldots, k+d$ gives a  homomorphism from $\Tt_r(\NN^n, \sigma_{\Theta})$ onto $B_{\Lambda}$.  The resulting  homomorphism $\Tt_r(\NN^n, \sigma_{\Theta})\to B_\Lambda$ factors through the quotient $\Tt_r(\NN^n, \sigma_{\Theta})/\Hilm[I]_d$, giving an inverse map for the canonical map $B_{\Lambda}\to \Tt_r(\NN^n, \sigma_{\Theta})/\Hilm[I]_d$ built above. This finishes the proof.
\end{proof}

 In contrast to \cite{AaHRS}, our approach to Toeplitz noncommutative tori does not require either of the subsets $\{L^{\sigma_\Theta}_{p} \mid p\in\NN^k\times 0_d\}$ and  $ \{L^{\sigma_\Theta}_{x}\mid x\in 0_k\times \NN^d\}$ to consist  of mutually commuting isometries. So our antisymmetric matrix $\Theta$ can have nontrivial diagonal blocks. 

We also do not require any of the isometries to be unitaries, as is the case for the generators $u_{e_1}, \ldots ,u_{e_d}$  considered in \cite{AaHRS}. Nevertheless, for the dynamics considered in the following sections, KMS states necessarily factor through the quotient of $\Tt_r(\NN^n,\sigma_\Theta)$ 
in which the generating isometries that are fixed by the dynamics become unitaries, see \lemref{GNSunitaries} below. This confirms and explains the insight behind the choice made in \cite[Section 2]{AaHRS} where a subset of  generators are chosen to be unitaries from the onset, and the dynamics acts nontrivially only on the others.

\section{Dynamics and  KMS states}\label{sec:characterisation}
We fix an $n\times n$ antisymmetric matrix~$\Theta=(\theta_{ij})$ throughout this section and we let $\Tt_r(\NN^n,\sigma_{\Theta})$ be the associated $n$\nb-dimensional Toeplitz noncommutative torus. When the cocycle in question is understood and there is no chance of confusion, we will omit it from the notation of the left regular $\sigma_{\Theta}$\nb-representation of~$\NN^n$, simply writing $L_p$ instead of $L_p^{\sigma_\Theta}$ for a canonical generating isometry. So $\{L_p\mid p\in\NN^n\}$ is the canonical set of isometries on $\ell^2(\NN^n)$ satisfying \begin{equation*}%\label{eq:left-relations}
L_pL_q=\sigma_\Theta(p,q)L_{p+q}=e^{-\pi i \braket{p}{\Theta q}}L_{p+q}, \quad\text{and}\quad L_pL_p^*L_qL_q^*=L_{p\vee q}L_{p\vee q}^*,\end{equation*} and thus also condition (3) of \defref{dfn-covariance}.

Let $r\in\RR^n$ and let 
 $\braket{p}{r}=\sum_{\substack{j=1}}^np_jr_j$ be the usual scalar product of $p$ and $r$. For each $t\in \RR$ the map  
$$u_t^r\colon \delta_q \mapsto  e^{i\braket{q}{r}t}\delta_q \qquad (q\in \NN^n)$$ 
extends to a (diagonal)  unitary operator $u^r_t$  on $\ell^2(\NN^n)$ and   $\{u^r_t\mid t\in \RR\}$ 
is a strongly continuous one-parameter unitary group. The automorphism group of $\B(\ell^2(\NN^n))$ 
obtained by conjugation with  $\{u^r_t\mid t\in \RR\}$  satisfies 
$$ u^r_t L_p (u^r_t)^*=  e^{i\braket{p}{r}t}L_p\qquad (p\in \NN^n),$$
and hence restricts to a strongly continuous one-parameter  automorphism group $\{\alpha^r_t\mid t\in \RR\}$  of $\Tt_r(\NN^n,\sigma_{\Theta})$. 
This automorphism group is characterized by its action on generators:
$$\alpha_t^r(L_p)= e^{i\braket{p}{r}t}L_p\qquad (p\in \NN^n).$$
An alternative way to obtain the dynamics is to observe that the vector $r$ determines a group homomorphism $t\mapsto e^{i( r)t} = (e^{i r_1t}, e^{i r_2t}, \ldots ,e^{i r_n t})$  from  $\RR$ to $\TT^n$,
and hence $\alpha^r_t =\gamma_{e^{i( r)t}} $ is simply the composition of this homomorphism with the canonical  gauge action $\gamma$ of~$\TT^n$ on~$\Tt_r(\NN^n,\sigma)$.

 We wish to study the $\mathrm{KMS}$ states for the dynamics associated to the vector $r$. Let $\beta\in\RR$ and recall that a state $\varphi$ of $\Tt_r(\NN^n,\sigma_{\Theta})$ is a \kmsb state for the automorphism group $\{\alpha^r_t\mid t\in \RR\}$ of $\Tt_r(\NN^n,\sigma_{\Theta})$ if it satisfies the \kmsb condition \begin{equation}\label{kmscondition}
\varphi(AB)=\varphi(B\alpha^r_{i\beta}(A))
\end{equation}
 for~$A$ $\alpha^r$\nb-analytic and $B \in \Tt_r(\NN^n,\sigma_{\Theta})$. It is well known that it suffices to verify this for~$A$ and~$B$ in an $\alpha^r$\nb-invariant set of analytic elements with dense linear span, such as the 
generating elements $L_pL_q^*$ with $p,q \in \NN^n$, see \cite[Proposition 8.12.3]{ped}.

It is immediate that $e^{-\braket{p}{r}\beta}=\varphi(L_pL_p^*) \leq 1$ for every  \kmsb state $\varphi$  and each $p\in \NN^n$. Hence, if $r_j<0$ for some $j\in\{1,2,\ldots,n\}$, then there is no \kmsb state for~$\beta>0$, and similarly, if $r_j>0$ for some $j\in\{1,2,\ldots,n\}$, then there is no \kmsb state for~$\beta<0$. It follows that for a \kmsb state to exist at nonzero $\beta$, each~$r_j$ must have the same sign as~$\beta$. Here we choose to work with nonnegative inverse temperatures. From now on we fix a vector~$r\in[0,\infty)^n$, which we often omit  from the notation, writing~$\alpha$ instead of~$\alpha^r$ when there is no risk of confusion.

We begin by verifying that the generating  isometries corresponding to the vanishing coordinates of $r$ become unitary operators in the GNS representation of any \kmsb state.

\begin{lemma} \label{GNSunitaries} Let~$\varphi_\beta$ be a \kmsb state of~$(\Tt_r(\NN^n,\sigma_{\Theta}),\alpha)$. Let~$(\Hilm[H]_{\varphi_\beta},\pi_{\varphi_\beta})$ denote the associated GNS representation. If $p\in\NN^n$ and $\braket{p}{r}=0$, then $\pi_{\varphi_\beta}(L_p)$ is unitary. Moreover, $\varphi_\beta$ factors through the quotient of $\Tt_r(\NN^n,\sigma_\Theta)$ modulo the ideal generated by the projections $\{1-L_{e_j}L_{e_j}^*\mid r_j=0\}$.

\begin{proof} Set $Q_p\coloneqq 1-L_pL_p^*$. Then $Q_p$ is analytic, and by the KMS condition,
\[\varphi_{\beta}(Q_p)=  1-\varphi_{\beta}(L_pL_p^*) = 1-e^{-\braket{p}{r}\beta}= 1-1= 0.\]
 We know that the elements of the form $L_pL_q^*$ have dense linear span in~$\Tt_r(\NN^n,\sigma_{\Theta})$ and satisfy $\alpha_z(L_pL_q^*) = e^{i\braket{p-q}{r}z} L_pL_q^*$. Hence the hypotheses of \cite[Lemma 2.2]{aHLRS13} are satisfied, and so we conclude that $\varphi$ factors through the quotient of $\Tt_r(\NN^n,\sigma_{\Theta})$ by the ideal generated by the projections $\{Q_p\mid \braket{p}{r}=0\}$.  This implies that~$\varphi_{\beta}(B^*Q_pB)=0$ for every~$B\in\Tt_r(\NN^n,\sigma_{\Theta})$, which implies  that 
$\pi_{\varphi_\beta}(L_p)$ is unitary as wished. In order to complete the proof of the lemma, notice that the ideal generated by the projections $\{Q_p\mid \braket{p}{r}=0\}$ coincides with the one generated by  $\{1-L_{e_j}L_{e_j}^*\mid r_j=0\}$.
\end{proof}
\end{lemma}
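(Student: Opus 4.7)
The plan is to produce $\pi_{\varphi_\beta}(Q_p) = 0$ for $Q_p \coloneqq 1 - L_p L_p^*$ whenever $\braket{p}{r} = 0$, by first showing $\varphi_\beta(Q_p) = 0$ and then promoting this scalar vanishing to ideal vanishing. Since $\alpha_t(L_p) = e^{i\braket{p}{r}t} L_p$, the generator $L_p$ is entire analytic with $\alpha_{i\beta}(L_p) = e^{-\braket{p}{r}\beta} L_p$. The \kmsb condition applied to the pair $L_p$, $L_p^*$ then yields
\[
\varphi_\beta(L_p L_p^*) = \varphi_\beta\bigl(L_p^* \alpha_{i\beta}(L_p)\bigr) = e^{-\braket{p}{r}\beta} \varphi_\beta(L_p^* L_p) = e^{-\braket{p}{r}\beta},
\]
which equals $1$ when $\braket{p}{r} = 0$, so $\varphi_\beta(Q_p) = 0$.

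To lift this to $\varphi_\beta(B^* Q_p B) = 0$ for every $B \in \Tt_r(\NN^n,\sigma_\Theta)$, I would invoke \cite[Lemma~2.2]{aHLRS13}. The hypotheses fit our setting: the elements $L_x L_y^*$ with $x, y \in \NN^n$ form a dense linear span of $\alpha$-analytic eigenvectors with $\alpha_t(L_x L_y^*) = e^{i\braket{x-y}{r}t} L_x L_y^*$, and $Q_p$ is an $\alpha$-fixed positive element killed by $\varphi_\beta$. The conclusion is that $\varphi_\beta$ vanishes on the two-sided ideal generated by $Q_p$; equivalently, $Q_p$ lies in the left kernel of the GNS representation, so $\pi_{\varphi_\beta}(L_p)\pi_{\varphi_\beta}(L_p)^* = 1$ and $\pi_{\varphi_\beta}(L_p)$ is unitary.

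For the \emph{moreover} clause I need to match the ideal generated by $\{Q_p : \braket{p}{r} = 0\}$ with the smaller ideal generated by $\{1 - L_{e_j} L_{e_j}^* : r_j = 0\}$. One inclusion is immediate since $r_j = 0$ gives $\braket{e_j}{r} = 0$. For the reverse, if $p \neq 0$ satisfies $\braket{p}{r} = 0$, then since $r \in [0,\infty)^n$ every index $j$ with $p_j > 0$ must satisfy $r_j = 0$. I would use the Nica-covariance factorisation \eqref{eq:n-covariance}, $L_p L_p^* = \prod_j L_{p_j e_j} L_{p_j e_j}^*$, expand $1 - L_p L_p^*$ as a telescoping sum of commuting-projection terms, each majorised by some $1 - L_{p_j e_j} L_{p_j e_j}^*$, and then reduce $1 - L_{p_j e_j} L_{p_j e_j}^* = 1 - (L_{e_j})^{p_j} (L_{e_j}^*)^{p_j}$ inductively via the identity $1 - L_{e_j}^k (L_{e_j}^*)^k = (1 - L_{e_j} L_{e_j}^*) + L_{e_j}\bigl(1 - L_{e_j}^{k-1} (L_{e_j}^*)^{k-1}\bigr) L_{e_j}^*$ to land in the ideal generated by $1 - L_{e_j}L_{e_j}^*$ for some $j$ with $r_j = 0$.

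The step I expect to require most care is the appeal to \cite[Lemma~2.2]{aHLRS13}; its statement is tailored to an invariant spanning set of analytic eigenvectors and a fixed element annihilated by the state, and I would want to verify that our setup fits the precise hypotheses rather than only a paraphrased variant. If a direct application is awkward, a backup route using the $\alpha$-invariant conditional expectation onto the fixed-point algebra together with the Cauchy--Schwarz consequences of the \kmsb condition should still deliver $\varphi_\beta(B^* Q_p B) = 0$ from $\varphi_\beta(Q_p) = 0$, since $Q_p$ is itself $\alpha$-fixed.
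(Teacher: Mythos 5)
Your proposal is correct and follows essentially the same route as the paper: compute $\varphi_\beta(Q_p)=0$ from the KMS condition, apply \cite[Lemma~2.2]{aHLRS13} to the spanning eigenvectors $L_xL_y^*$ to get vanishing on the ideal and hence unitarity in the GNS representation, and identify the two ideals. The only difference is that you spell out the telescoping/Nica-covariance argument for the ideal identification, which the paper simply asserts; that detail is accurate and matches what the paper's \eqref{eq:n-covariance} and the factorisation $L_{p_je_j}=a_j(L_{e_j})^{p_j}$ provide.
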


Given $d\in\NN$, we denote by $0_d$ the zero element of $\NN^d$, so that 
$$0_d=\underbrace{(0,\ldots,0)}_{d \text{ times}}.$$ 
We shall assume without loss of generality that all the nonzero coordinates of~$r$ appear at the beginning. Thus we fix the notation 
 $r = (r_1, r_2, \ldots r_k, 0_d)$ to indicate a vector in $[0,\infty)^n$ with strictly positive first $k$ coordinates and zeros for the remaining $d$ coordinates. This includes the two extreme cases of  $r=0$, in which case  we have $k =0$,  and of strictly positive $r$, in which case~$k =n$.

\begin{proposition}\label{condexp} Let $n = k+d$ with $k,d\in\NN$ and let $E^{(k)}\coloneqq E^{\TT^k \times \{1_d\} }$ denote the conditional expectation associated to the restriction of the gauge action of $\TT^n$ to the closed subgroup~$\TT^k \times \{1_d\}$. Then 

\begin{enumerate}

\item $\Tt_r(\NN^n,\sigma_\Theta) = \clsp\{L_p L_x L_y^* L_q^*\mid p,q \in \NN^k\times 0_d, \ x,y \in 0_k\times \NN^d\};$

\smallskip
\item $
E^{(k)} (\Tt_r(\NN^n,\sigma_\Theta)) =  \clsp \{ L_p L_x L_y^* L_p^*\mid p \in \NN^k\times 0_d,\ x,y \in 0_k\times \NN^d\}.
$
\end{enumerate}
\begin{proof}
That the elements of the form $L_p L_x L_y^* L_q^*$ with $p,q \in \NN^k\times 0_d$ and $x,y \in 0_k\times \NN^d $ span a dense \Star subalgebra of $\Tt_r(\NN^n,\sigma_{\Theta})$ is a consequence of relations (1)--(3) of Definition~\ref{dfn-covariance}. The second assertion follows from part (1)  because $E^{(k)}$ is a contraction that satisfies  $E^{(k)} ( L_p L_x L_y^* L_q^*) = \delta_{p,q} L_p L_x L_y^* L_p^*$, where $\delta_{p,q}$ is the Kronecker delta.
\end{proof}
\end{proposition}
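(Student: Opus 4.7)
The plan is to start from the dense spanning set $\{L_a L_b^* \mid a,b \in \NN^n\}$ recalled in the paragraph just before the proposition, and to refactor each such monomial into the four-factor form on the right-hand side of (1). The key observation is the direct sum decomposition $\NN^n = (\NN^k\times 0_d) \oplus (0_k\times \NN^d)$, which writes every $a\in\NN^n$ uniquely as $a=p+x$ with $p\in\NN^k\times 0_d$ and $x\in 0_k\times\NN^d$, and similarly $b=q+y$.

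For part (1), I would apply the twisted multiplicativity relation~(2) of Definition~\ref{dfn-covariance} (and its adjoint form) to produce unimodular scalars $c_1,c_2\in\TT$ with $L_a=c_1 L_p L_x$ and $L_b^*=c_2 L_y^* L_q^*$, hence $L_a L_b^* = c_1c_2\, L_p L_x L_y^* L_q^*$. Conversely, every element of the proposed spanning family is a unimodular scalar multiple of $L_{p+x}L_{q+y}^*$. Consequently the two plain linear spans coincide, and therefore so do their norm closures; the latter equals $\Tt_r(\NN^n,\sigma_\Theta)$.

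For part (2), the restriction of the gauge action of $\TT^n$ to the closed subgroup $\TT^k\times\{1_d\}$ fixes the generators $L_x$ and the adjoints $L_y^*$ with $x,y\in 0_k\times\NN^d$, and sends $L_p L_x L_y^* L_q^*$ to $z^{p-q}\, L_p L_x L_y^* L_q^*$ for $z\in\TT^k$ (identifying $p,q$ with their nonzero coordinates). Integrating over $\TT^k$ then selects the diagonal terms, giving $E^{(k)}(L_p L_x L_y^* L_q^*) = \delta_{p,q}\, L_p L_x L_y^* L_p^*$. Since $E^{(k)}$ is a contractive linear map, applying it to the equality obtained in (1) and passing to the norm closure yields (2).

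The only obstacle, such as it is, is careful bookkeeping of the unimodular cocycle phases through the factorisation. No invocation of the Nica-covariance relation (3) of Definition~\ref{dfn-covariance} is needed for this proposition, because the ``outer'' indices $p,q$ and the ``inner'' indices $x,y$ come from orthogonal subsemigroups, so we never have to commute a generator $L_x$ past an adjoint $L_q^*$ (they are already in the prescribed order).
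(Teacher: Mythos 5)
Your argument is correct and follows essentially the same route as the paper: reduce to the known spanning set $\{L_aL_b^*\mid a,b\in\NN^n\}$, split indices along $\NN^n=(\NN^k\times 0_d)\oplus(0_k\times\NN^d)$ using relation (2) of Definition~\ref{dfn-covariance} to absorb the resulting unimodular cocycle factors, and average the gauge action over $\TT^k\times\{1_d\}$ to get $E^{(k)}(L_pL_xL_y^*L_q^*)=\delta_{p,q}\,L_pL_xL_y^*L_p^*$. The only quibble is your closing remark: relation (3) is still implicitly used, because your starting point --- that $\{L_aL_b^*\}$ has dense linear span --- is itself a consequence of Nica covariance, which is why the paper's proof attributes the density to relations (1)--(3) rather than to (1)--(2) alone.
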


\begin{proposition}\label{prop:KMS-characterisation} 
Let $n = k+d$ with $k,d\in\NN$ and let  $\alpha$ be the dynamics determined by $r=(r_1,\ldots, r_k, 0_d)$. Let $0<\beta<\infty$ and suppose that $\varphi$ is a \kmsb state of $(\Tt_r(\NN^n,\sigma_{\Theta}), \alpha)$.
 Then $\varphi $ restricts to a trace on the $\Cst$\nb-subalgebra $\Cst(L_x: x\in  0_k\times\NN^d)$ and satisfies
\begin{align}\label{equ:charac-KMS}
\varphi(L_pL_xL_y^*L_q^*)=\delta_{p,q}e^{-\beta\braket{p}{r}} \varphi(L_xL_y^*)
\end{align}
for all~$p,q\in\NN^k\times 0_d$ and $x,y \in 0_k \times \NN^d$, where $\delta_{p,q}$ is the Kronecker delta. 

\bp 
Suppose that $\varphi$ is a $\mathrm{KMS}_\beta$ state of $(\Tt_r(\NN^n,\sigma_\Theta),\alpha)$. Since the elements in $\Cst(L_x: x\in  0_k\times\NN^d)$ are fixed points of $\alpha$, the \kmsb condition implies that the restriction of $\varphi$ to  $\Cst(L_x: x\in  0_k\times\NN^d)$ is a trace.

We show next that $\varphi$ satisfies  \eqref{equ:charac-KMS}. Let $p,q\in \NN^k\times 0_d$ and $x,y\in 0_k\times\NN^d$. In case $p=q$, the \kmsb condition with $A =L_p$ and $B=L_xL_y^*L_q^*$ gives \eqref{equ:charac-KMS} because  $L_p^*L_q=1$. So suppose $p\neq q$. We aim to prove that $$\varphi(L_pL_xL_y^*L_q^*)=0.$$ For this, we write $p'\coloneqq(p\vee q)-p$ and $q'\coloneqq(p\vee q)-q$. Notice that at least one of the numbers $p', q'$ is nonzero. Because $\varphi$ is a state and so preserves adjoints, we may assume without loss of generality that $q'\neq 0$. Applying the KMS condition \eqref{kmscondition} with $A = L_pL_xL_y^*$ and $B=L_q^*$ and using that $\sigma_\Theta$ is a circle-valued function we get
 \[\big|\varphi(L_pL_xL_y^*L_q^*)\big|=e^{-\beta\braket{p}{r}}\big|\varphi(L_q^*L_pL_xL_y^*)\big|=e^{-\beta\braket{p}{r}}\big|\varphi(L_{q'}L_{p'}^*L_xL_y^*)\big|.\]
 Since $p'\vee x=x+p'$, we have $L_{p'}^*L_x=\sigma_{\Theta}(x,p')^2L_xL_{p'}^*$. Thus
  \begin{equation}\label{first-pq}
  \big|\varphi(L_pL_xL_y^*L_q^*)\big|=e^{-\beta\braket{p}{r}}\big|\varphi(L_{q'}L_xL_{p'}^*L_y^*)\big|=e^{-\beta\braket{p}{r}}\big|\varphi(L_{q'}L_xL_y^*L_{p'}^*)\big|.
 \end{equation}
  Observe that $p'\vee q'=p'+q'$ and $p'\vee y=p'+y$. Hence the same argument as above gives
  \begin{equation*}
  \big|\varphi(L_{q'}L_xL_y^*L_{p'}^*)\big|=e^{-\beta\braket{q'}{r}}\big|\varphi(L_{q'}L_xL_y^*L_{p'}^*)\big|.
  \end{equation*}  
Continuing this process, we see that for each $l\in \NN$ \begin{equation}\label{second-q'p'}
\big|\varphi(L_{q'}L_xL_y^*L_{p'}^*)\big|=e^{-l\beta\braket{q'}{r}}\big|\varphi(L_{q'}L_xL_y^*L_{p'}^*)\big|.
\end{equation} Substituting  \eqref{second-q'p'} into \eqref{first-pq} we arrive at
\[
  \big|\varphi(L_pL_xL_y^*L_q^*)\big|=e^{-\beta\braket{p}{r}}e^{-l\beta\braket{q'}{r}}\big|\varphi(L_{q'}L_xL_y^*L_{p'}^*)\big|.
\]
Since $q'\neq 0$, it follows that $\lim_{\substack{l\to\infty}}e^{-l\beta \braket{q'}{r}}=0.$ Therefore $\varphi(L_pL_xL_y^*L_q^*)=0$ as wished. 
\ep
\end{proposition}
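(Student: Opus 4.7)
The plan is to prove the two assertions separately, with the trace property being almost immediate and the real work going into \eqref{equ:charac-KMS}. For the trace claim, I would observe that every generator $L_x$ with $x \in 0_k \times \NN^d$ satisfies $\braket{x}{r} = 0$ and is therefore fixed by $\alpha$. Thus the whole subalgebra $\Cst(L_x : x \in 0_k\times \NN^d)$ sits inside the fixed-point algebra of $\alpha$, on which the KMS condition specialises to the trace identity $\varphi(AB) = \varphi(BA)$.

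For \eqref{equ:charac-KMS}, the diagonal case $p = q$ follows at once from the KMS condition with $A = L_p$ and $B = L_x L_y^* L_p^*$: this produces the factor $e^{-\beta \braket{p}{r}}$ and reduces the expression to $\varphi(L_x L_y^* L_p^* L_p) = \varphi(L_x L_y^*)$, since $L_p$ is an isometry. The off-diagonal case $p \neq q$ is the heart of the proof. Writing $p' \coloneqq (p\vee q)-p$ and $q' \coloneqq (p\vee q)-q$, at least one of $p',q'$ is nonzero; after passing to the adjoint if necessary, I may assume $q' \neq 0$, so $\braket{q'}{r} > 0$ because the first $k$ coordinates of $r$ are strictly positive.

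The strategy is then to apply the KMS condition with $A = L_p L_x L_y^*$ and $B = L_q^*$ (noting that $\braket{x}{r} = \braket{y}{r} = 0$ so no contribution comes from $x,y$), to rewrite $L_q^* L_p$ via relation~(3) of \defref{dfn-covariance} as a unit-modulus scalar times $L_{q'} L_{p'}^*$, and then to commute $L_{p'}^*$ past $L_x$ and $L_y^*$ using the same relation. This commutation is available because $p'$ is supported in the first $k$ coordinates and $x,y$ in the last $d$, so $p' \vee x = p' + x$ and $p' \vee y = p' + y$. Absorbing all phase factors into absolute values, this yields the intermediate identity
\[
\big|\varphi(L_p L_x L_y^* L_q^*)\big| = e^{-\beta \braket{p}{r}}\, \big|\varphi(L_{q'} L_x L_y^* L_{p'}^*)\big|.
\]

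The main obstacle is now to show the right-hand side vanishes. I would repeat the same manoeuvre with $A = L_{q'} L_x L_y^*$ and $B = L_{p'}^*$: the KMS condition introduces an extra factor $e^{-\beta \braket{q'}{r}}$, Nica covariance rewrites $L_{p'}^* L_{q'}$ as a scalar times $L_{q'} L_{p'}^*$ (using that $p' \vee q' = p' + q'$, which holds because $p'$ and $q'$ have disjoint coordinate support by construction), and commuting $L_{p'}^*$ back through $L_x, L_y^*$ returns to the same expression. The result is the self-reducing identity $|\varphi(L_{q'} L_x L_y^* L_{p'}^*)| = e^{-\beta \braket{q'}{r}} |\varphi(L_{q'} L_x L_y^* L_{p'}^*)|$, and since $e^{-\beta \braket{q'}{r}} < 1$ this forces the expression, and hence $\varphi(L_p L_x L_y^* L_q^*)$, to be zero. (Equivalently, iterating $l$ times gives a factor $e^{-l\beta \braket{q'}{r}}$ that tends to zero as $l\to\infty$.)
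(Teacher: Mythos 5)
Your proposal is correct and follows essentially the same route as the paper: the trace property from $\alpha$-invariance of the subalgebra, the diagonal case from a direct application of the KMS condition, and the off-diagonal case via the elements $p'=(p\vee q)-p$, $q'=(p\vee q)-q$ and the self-reducing identity $\lvert\varphi(L_{q'}L_xL_y^*L_{p'}^*)\rvert = e^{-\beta\braket{q'}{r}}\lvert\varphi(L_{q'}L_xL_y^*L_{p'}^*)\rvert$. The only (harmless) cosmetic difference is that you observe a single application of this identity already forces vanishing since $e^{-\beta\braket{q'}{r}}<1$, whereas the paper iterates and lets $l\to\infty$.
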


\begin{lemma}\label{lemma:Q-properties}
Let $n = k+d$ with $k,d\in\NN$. The product
\begin{equation}\label{eqn:projQ}
Q \coloneqq \prod_{j =1}^k( 1 - L_{e_i} L_{e_i}^*)
\end{equation}
is a projection in $\Tt_r(\NN^n,\sigma_\Theta)$ satisfying
\begin{enumerate}
\item $ QL_{p} = 0 = L_{p}^* Q $ for every $p \in \NN^k\times 0_d \setminus \{0\} $;
\item $Q L_p^*L_xL_y^*L_p=   L_p^*L_xL_y^*L_pQ = Q L_p^*L_xL_y^*L_p Q$ for every $x,y \in 0_k\times \NN^d$ and $p\in \NN^k\times 0_d$;
\item $Q \Tt_r(\NN^n,\sigma_\Theta) Q = \clsp\{ QL_x L_y^* Q \mid x,y \in 0_k \times \NN^d\} $.
\end{enumerate}
\bp
By definition, $Q$ is the product of a finite collection of commuting projections, thus is a projection as well.
In order to prove (1), let
$0\neq p\in \NN^k \times 0_d $ and let~$j$ be the first nonzero coordinate of~$p$. Then the product $QL_p $ contains a factor of the form $(1 - L_{e_j}L_{e_j}^*) L_{e_j}$ and hence vanishes. Taking adjoints shows that $L_p^* Q $ vanishes too.

%%%%%%%%%%%%%%%%%%
In order to prove (2), notice first that 
 $e_i\vee x = e_i+x$ for every   $x \in 0_k \times \NN^d$ and every $i =1, 2, \ldots, k$. Thus by \lemref{lem:relprimecomm}
 the projection $L_{e_i}L_{e_i}^*$ commutes with 
 $L_x$, with every $L_y^*$, and with every $L_xL_y^*$ for $y  \in 0_k \times \NN^d$. 
Thus $Q$ commutes with $L_xL_y^*$ for all $x,y\in 0_k\times\NN^d$. Now take $p\in \NN^k\times 0_d$ and notice that 
\defref{dfn-covariance}(3) applied twice gives 
$$L_p^*L_xL_y^*L_p=\sigma_{\Theta}(x,p)^2\sigma_{\Theta}(p,y)^2L_xL_y^*.$$
 So $Q$ commutes with $L_p^*L_xL_y^*L_p$ because it commutes with $L_xL_y^*$. This completes the proof of part (2).

For part (3), we use that $$\Tt_r(\NN^n,\sigma_\Theta) = \overline{\operatorname {span}} \{L_p L_x L_y^* L_q^* \mid p,q \in \NN^k\times 0_d, \ x,y\in 0_k\times \NN^d\}.$$ So the products $Q L_p L_x L_y^* L_q^* Q $ span a dense subset of the corner.
If $p,q \in \NN^k \times 0_d$ are not both zero, and if $x,y \in 0_k \times \NN^d$, then $Q L_p L_x L_y^* L_q^* Q = 0$ because of part (1). 
Hence the corner is the closed linear span of the products $Q L_x L_y^* Q $ with $x,y\in 0_k\times \NN^d$.
\ep
\end{lemma}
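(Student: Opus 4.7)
The plan is to address the three parts in order, treating the well-definedness of $Q$ as a preliminary. First, $Q$ is a projection because the range projections $L_{e_i}L_{e_i}^*$ mutually commute: by \lemref{lem:nicacovariance}, $L_{e_i}L_{e_i}^* \cdot L_{e_j}L_{e_j}^* = L_{e_i\vee e_j}L_{e_i\vee e_j}^*$, an expression symmetric in $i,j$. For part (1), if $0 \neq p \in \NN^k \times 0_d$, pick any $j \in \{1,\ldots,k\}$ with $p_j > 0$; the cocycle identity lets me factor $L_p$ as a unit scalar multiple of $L_{e_j} L_{p-e_j}$. Since the factor $1 - L_{e_j}L_{e_j}^*$ inside $Q$ commutes with the other factors and kills $L_{e_j}$ on the left, we get $QL_p = 0$. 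Taking adjoints yields $L_p^* Q = 0$.

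For part (2), I would split the argument into two observations. First, $Q$ commutes with $L_xL_y^*$ for all $x,y \in 0_k \times \NN^d$: since the support of each $e_i$ (with $i \leq k$) is disjoint from that of $x$, we have $e_i \vee x = e_i + x$, so \lemref{lem:relprimecomm} gives $L_{e_i}L_{e_i}^* L_x = L_x L_{e_i}L_{e_i}^*$, and the analogous identity (by taking adjoints) holds with $L_y^*$. So each factor of $Q$, and hence $Q$ itself, commutes with $L_x L_y^*$. Second, I would apply relation (3) of \defref{dfn-covariance} twice to show that for $p \in \NN^k \times 0_d$, the element $L_p^* L_x L_y^* L_p$ is a unit scalar multiple of $L_x L_y^*$: because $p \vee x = p+x$ and $p \vee y = p+y$ thanks to disjoint coordinate supports, the indices in relation (3) collapse nicely, and the middle $L_p^* L_p = 1$ then cancels. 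Combining these two observations shows that $L_p^* L_x L_y^* L_p$ commutes with $Q$, yielding the first equality of (2); the second equality is immediate from $Q = Q^2$.

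For part (3), I would start from the spanning family for $\Tt_r(\NN^n,\sigma_\Theta)$ provided by \proref{condexp}(1), namely $\{L_p L_x L_y^* L_q^* \mid p,q \in \NN^k \times 0_d,\ x,y \in 0_k \times \NN^d\}$. Since $A \mapsto QAQ$ is a bounded linear idempotent, $Q\Tt_r(\NN^n,\sigma_\Theta)Q$ is the closed linear span of the products $Q L_p L_x L_y^* L_q^* Q$. Part (1) kills every summand with $p \neq 0$ (via $QL_p = 0$) or $q \neq 0$ (via $L_q^* Q = 0$), so only the terms with $p=q=0$ survive, yielding the claimed spanning set. I do not anticipate any genuine obstacle in this proof; the main point is simply to recognise that the disjointness between the first $k$ and last $d$ coordinates lets all relevant joins degenerate to sums, so that the Nica covariance and covariance relation (3) collapse everything cleanly. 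The only bookkeeping to watch carefully is the tracking of cocycle scalars in part (2), but the antisymmetry of $\Theta$ guarantees they pair off into unit modulus.
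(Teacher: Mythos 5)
Your proposal is correct and follows essentially the same route as the paper: commuting range projections for the well-definedness of $Q$, killing $L_{e_j}$ by the factor $1-L_{e_j}L_{e_j}^*$ for part (1), the two observations (commutation of $Q$ with $L_xL_y^*$ via \lemref{lem:relprimecomm} and the scalar identity $L_p^*L_xL_y^*L_p=\sigma_{\Theta}(x,p)^2\sigma_{\Theta}(p,y)^2L_xL_y^*$ from \defref{dfn-covariance}(3)) for part (2), and compression of the spanning family from \proref{condexp}(1) for part (3). The only cosmetic difference is that you factor $L_p$ explicitly through an arbitrary nonzero coordinate rather than the first one; the argument is the same.
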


\begin{lemma}\label{lemma:reconstruction}
Let $0 < \beta < \infty$ and suppose that $\varphi$ is a \kmsb state for $(\Tt_r(\NN^n,\sigma_\Theta), \alpha)$. Let $Q$ be the projection from \eqref{eqn:projQ}. Then $\varphi(Q) >0$ and the sum $Z(\beta)\coloneqq\sum_{p\in \NN^k\times 0_d} e^{-\beta \braket{p}{r}}$ satisfies $Z(\beta) =\varphi(Q)^{-1} $. Moreover, the map $\omega_\varphi\colon X\mapsto Z(\beta)\varphi(X)$ for $X\in Q \Tt_r(\NN^n,\sigma_\Theta) Q$
is a tracial state of the corner  $Q \Tt_r(\NN^n,\sigma_\Theta) Q$ and  $\varphi$ can be reconstructed 
from $\omega_\varphi$ by
\begin{equation}\label{reconstruction-formula}
\varphi(X)= \frac{1}{Z(\beta)} \sum_{p\in \NN^k\times 0_d} e^{-\beta \braket{p}{r}} \omega_\varphi( QL_p^* X L_p Q)\quad \text{for  all } X\in\Tt_r(\NN^n,\sigma_\Theta).
\end{equation}
\end{lemma}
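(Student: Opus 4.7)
The plan is to follow the Bost--Connes strategy of \cite{LACA1998330}, exploiting the partition of unity
\begin{equation*}
\sum_{p\in\NN^k\times 0_d}L_pQL_p^*=1,
\end{equation*}
which holds in the strong operator topology on $\ell^2(\NN^n)$ because $L_pQL_p^*$ is the projection onto the subspace of vectors whose first $k$ coordinates equal $p$, and these subspaces partition $\NN^n$. Applying the KMS condition with $A=L_p$ and $B=QL_p^*$ yields $\varphi(L_pQL_p^*)=e^{-\beta\braket{p}{r}}\varphi(Q)$, and summing (in the normal extension of $\varphi$ to the bicommutant of its GNS representation) gives $1=Z(\beta)\varphi(Q)$. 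Since $Z(\beta)<\infty$ by the standing assumption $r_i>0$ for $i\leq k$, this forces $\varphi(Q)=Z(\beta)^{-1}>0$. The normalisation $\omega_\varphi(Q)=1$ is then immediate and positivity is clear; the trace property on $Q\Tt_r(\NN^n,\sigma_\Theta)Q$ follows from \lemref{lemma:Q-properties}(3), which writes the corner as the closed span of the elements $QL_xL_y^*Q$ with $x,y\in 0_k\times\NN^d$. These elements are $\alpha$-invariant because $r$ vanishes on the last $d$ coordinates, and KMS gives $\varphi(AB)=\varphi(BA)$ whenever one factor is analytic and $\alpha$-invariant.

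For the reconstruction formula, apply the partition of unity on both sides of $X$ to obtain
\begin{equation*}
\varphi(X)=\sum_{p,q\in\NN^k\times 0_d}\varphi(L_pQL_p^*\,X\,L_qQL_q^*).
\end{equation*}
A second use of KMS (with $A=L_p$ and $B=QL_p^*XL_pQL_p^*$) collapses the diagonal terms to $\varphi(L_pQL_p^*XL_pQL_p^*)=e^{-\beta\braket{p}{r}}\varphi(QL_p^*XL_pQ)=Z(\beta)^{-1}e^{-\beta\braket{p}{r}}\omega_\varphi(QL_p^*XL_pQ)$, which is exactly the summand in \eqref{reconstruction-formula}. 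For the off-diagonal terms, density reduces the problem to generators $X=L_aL_uL_v^*L_b^*$; by \lemref{lemma:Q-properties}(1) the expression $\varphi(L_pQL_p^*\,X\,L_qQL_q^*)$ vanishes unless $a\leq p$ and $b\leq q$, and once those conditions hold, applying Nica covariance together with the commutation of $L_uL_v^*$ with $Q$ from \lemref{lemma:Q-properties}(2) introduces the residual factor $L_{p-a}^*L_{q-b}$ between two $Q$'s; for $p\ne q$ this produces a creation or annihilation operator with a nonzero index in $\NN^k\times 0_d$ adjacent to a $Q$, which kills the expression via $L_{p'}^*Q=0=QL_{p'}$.

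The main technical hurdle is the justification of the interchange of $\varphi$ with the infinite sum and the repeated use of the KMS condition. This is handled by passing to the normal extension $\bar\varphi$ of $\varphi$ on $\pi_\varphi(\Tt_r(\NN^n,\sigma_\Theta))''$: the finite partial sums $\sum_{p\in F}L_pQL_p^*$ form an increasing net of projections converging strongly to a projection $P\leq 1$, and the computation $Z(\beta)\varphi(Q)=1$ from the first paragraph gives $\bar\varphi(P)=1$, so $\bar\varphi$ vanishes on $1-P$. Hence $\bar\varphi(X)=\bar\varphi(XP)$ and all the rearrangements above are justified by normality.
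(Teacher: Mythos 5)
Your overall strategy is the paper's (reconstruct $\varphi$ from its restriction to the corner via the family $L_pQL_p^*$), but there is a genuine circularity at the crucial first step. The identity $\sum_{p}L_pQL_p^*=1$ holds in the strong topology of $\BB(\ell^2(\NN^n))$, but it is not a statement about the $\Cst$\nb-algebra: in the GNS representation of $\varphi$ the increasing net of partial sums converges only to some projection $P\leq 1$, and a priori $\bar\varphi(P)$ could be less than $1$ (compare the Toeplitz algebra, where $1-S^nS^{*n}\nearrow 1$ on $\ell^2(\NN)$ yet all these projections die in the quotient $\mathrm{C}(\TT)$). You derive $Z(\beta)\varphi(Q)=1$ by summing against the partition of unity, i.e.\ by assuming $\bar\varphi(P)=1$, and then in your final paragraph you justify $\bar\varphi(P)=1$ by appealing to $Z(\beta)\varphi(Q)=1$. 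Neither statement is ever established independently. The paper breaks this circle by computing $\varphi(Q)$ directly: expanding $Q=\sum_{J\subseteq\{1,\dots,k\}}(-1)^{|J|}\prod_{j\in J}L_{e_j}L_{e_j}^*$ and applying the KMS condition termwise gives $\varphi(Q)=\prod_{j=1}^k(1-e^{-\beta r_j})>0$, which equals $Z(\beta)^{-1}$ by the geometric-series (Euler product) identity; only then does $\varphi(P_q)\to 1$ follow, and Cauchy--Schwarz (rather than normal extensions) yields $\varphi(P_qXP_q)\to\varphi(X)$. An alternative non-circular route would be to dominate $1-P_q$ by $\sum_{j=1}^kL_{(q_j+1)e_j}L_{(q_j+1)e_j}^*$, but some such independent estimate is indispensable.

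There is also a smaller gap in your treatment of the off-diagonal terms. After reducing to $a\leq p$, $b\leq q$, the residual factor is $QL_{p-a}^*L_{q-b}Q$, and your argument that a nonzero index lands next to a $Q$ fails precisely when $p-a=q-b$ with $p\neq q$ (e.g.\ $X=L_{e_1}$, $p=e_1$, $q=0$ gives $L_{e_1}QL_{e_1}^*\cdot X\cdot Q=L_{e_1}Q\neq 0$ as an operator). In that case the term is killed not at the operator level but by $\varphi$, via \proref{prop:KMS-characterisation} or one further use of the KMS condition. The clean argument, which the paper uses, is to note that each $L_pQL_p^*$ is a fixed point of $\alpha$, so the \kmsb condition gives $\varphi(L_pQL_p^*\,X\,L_qQL_q^*)=\varphi(X\,L_qQL_q^*\,L_pQL_p^*)=0$ for $p\neq q$ directly from the mutual orthogonality of these projections. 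Your verification that $\omega_\varphi$ is tracial and your handling of the diagonal terms are correct.
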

\bp
Expanding the product defining~$Q$, we have 
\begin{equation}\label{equ:Q-sum}
Q=\sum_{J\subseteq \{1,\dots, k\}}(-1)^{|J|}\prod_{j \in J}L_{e_j}L_{e_j}^*
\end{equation} since $\{L_pL_p^*\mid p\in\NN^n\}$ are commuting projections.
Let  $i,j\in\{1,\ldots,k\}$ with $i\neq j$, so that $L_{e_i}L_{e_i}^*$ commutes with $L_{e_j}$ by \lemref{lem:relprimecomm}. Then 
\[
\varphi(L_{e_i}L_{e_i}^*L_{e_j}L_{e_j}^*) =\varphi(L_{e_j}L_{e_i}L_{e_i}^*L_{e_j}^*)= e^{-\beta( r_j+r_i)}
\]
by the KMS condition. An analogous computation shows that for each $J\subset \{1,\dots, k\}$, we have
\[\varphi\big(\prod_{j \in J}L_{e_j}L_{e_j}^*\big)=e^{-\beta \sum_{j\in J}r_j}.\]
It follows that 
\begin{align}\label{eqn:phiofQcomputation}
\varphi(Q)= \sum_{J\subseteq \{1,\dots, k\}}(-1)^{|J|}\varphi\big(\prod_{j \in J}L_{e_j}L_{e_j}^*\big) 
=\sum_{J\subseteq \{1,\dots, k\}}(-1)^{|J|}e^{-\beta \sum_{j\in J}r_j} =\prod_{j=1}^k(1-e^{-\beta r_j}),
\end{align}
which is obviously positive and  equal to $Z(\beta)^{-1}$ by an Euler product like expansion.

If $p\neq l$, then
 \[L_pQL_p^*L_{l}QL_{l}^*=\overline{\sigma_\Theta(p,p\vee l-p)}\sigma_\Theta(l,p\vee l-l)L_pQL_{p\vee l-p}L_{p\vee l-l}^*QL_{l}^*=0\]
 by Lemma~\ref{lemma:Q-properties}(1) because $p\vee l-p$ and $p\vee l-l$ cannot both be zero. 
 Hence  the  sum of mutually orthogonal projections
$$P_q\coloneqq\sum_{ \substack{p\leq q}}L_pQL_p^*$$  is a projection itself and satisfies 
 \begin{align*}
\varphi( P_q)
=\sum_{\substack{p\leq q}}\varphi(L_pQL_p^* )
=\sum_{\substack{p\leq q}}e^{-\beta\braket{p}{r}}\varphi(L_p^*L_pQ)
=\sum_{\substack{p\leq q}}e^{-\beta \braket{p}{r}}\varphi(Q).
\end{align*}
It follows that $\varphi(P_q )\to1$ as $q\to \infty$ in the  directed set $\NN^k \times 0_d$ since $\sum_{\substack{p\leq q}}e^{-\beta \braket{p}{r}}\to Z(\beta)=\varphi(Q)^{-1}$ by the first part of the proof. Hence an application of the Cauchy--Schwarz inequality implies that $\varphi(P_q X P_q)\to \varphi(X)$ as $q\to \infty$ for all $X\in \Tt_r(\NN^n,\sigma_\Theta)$. Thus by the KMS condition we have
\begin{align*}
\varphi(X)&=\sum_{p,l\in \NN^k\times 0_d}\varphi(L_pQL_p^* X L_lQL_l^*)\\
&=\sum_{p\in \NN^k\times 0_d}\varphi(L_pQL_p^* X L_pQL_p^*)\\
&=\sum_{p\in \NN^k\times 0_d}e^{-\beta \braket{p}{r}}\varphi(QL_p^* X L_pQ)
\end{align*}
We then get the reconstruction formula \eqref{reconstruction-formula} by replacing the summand $\varphi(QL_p^* X L_pQ)$ by $\frac{1}{Z(\beta)}\omega_\varphi(QL_p^* X L_pQ)$ .
\ep

\begin{proposition}\label{isomorphisms kmsbeta states}
Let $0 < \beta < \infty$ and let $Q $ be the projection from \eqref{eqn:projQ}. For each tracial state~$\omega$ of the corner  $Q \Tt_r(\NN^n,\sigma_\Theta) Q$,  define
\begin{equation}\label{eqn:Tbeta0}
T_{\beta}( \omega)(X) \coloneqq \frac{1}{Z(\beta)} \sum_{l\in \NN^k\times 0_d} e^{-\beta\braket{l}{r}} 
\omega(Q L_l^* X L_l Q), \qquad X\in \Tt_r(\NN^n,\sigma_\Theta).
\end{equation}
Then $T_\beta$ is an affine weak* 
homeomorphism of the tracial state space of the corner onto the \kmsb state space of $(\Tt_r(\NN^n,\sigma_\Theta), \alpha)$.
\end{proposition}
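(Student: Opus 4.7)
The strategy has four ingredients: (a) $T_\beta(\omega)$ is a state; (b) it satisfies the \kmsb condition; (c) the assignment $\varphi\mapsto\omega_\varphi\coloneqq Z(\beta)\varphi|_{Q\Tt_r(\NN^n,\sigma_\Theta)Q}$ is a two-sided inverse of $T_\beta$; (d) both maps are affine and weak*-continuous.

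For (a), the series defining $T_\beta(\omega)(X)$ converges absolutely with $|T_\beta(\omega)(X)|\leq\|X\|$ since $\|QL_l^*XL_lQ\|\leq\|X\|$ and $\sum_le^{-\beta\braket{l}{r}}=Z(\beta)$. Positivity is immediate: $QL_l^*X^*XL_lQ=(XL_lQ)^*(XL_lQ)$ is a positive element of the hereditary subalgebra $Q\Tt_r(\NN^n,\sigma_\Theta)Q$, on which $\omega$ is positive. Normalisation follows from $L_l^*L_l=1$ and $\omega(Q)=1$.

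For (b), I would verify the two conclusions of \proref{prop:KMS-characterisation} for $T_\beta(\omega)$: that it restricts to a trace on $\Cst(L_x\mid x\in 0_k\times\NN^d)$ and satisfies \eqref{equ:charac-KMS}. The main computational input is the identity $L_l^*L_xL_y^*L_l = e^{2\pi i\braket{l}{\Theta(x-y)}}L_xL_y^*$ for $l\in\NN^k\times 0_d$ and $x,y\in 0_k\times\NN^d$, obtained from Nica-covariance and the cocycle formula; summed against the weights $e^{-\beta\braket{l}{r}}$, this yields the Euler-product phase appearing in $T_\beta(\omega)(L_xL_y^*)$. The trace property on the subalgebra follows because this Euler-product phase for a product $L_xL_y^*L_{x'}L_{y'}^*$ is symmetric under the swap $(x,y)\leftrightarrow(x',y')$, combined with the trace property of $\omega$ on the corner and the fact (\lemref{lemma:Q-properties}(2)) that $Q$ commutes with such products. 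For \eqref{equ:charac-KMS}, when $p\neq q$ the reduction of $L_l^*L_p$ and $L_q^*L_l$ via Nica-covariance produces a factor $QL_aL_b^*Q$ with $a$ or $b$ nonzero in $\NN^k\times 0_d$, which vanishes by \lemref{lemma:Q-properties}(1); when $p=q\leq l$ the accompanying cocycle factors cancel and the substitution $l\mapsto l+p$ reorganises the sum to produce $e^{-\beta\braket{p}{r}}T_\beta(\omega)(L_xL_y^*)$. Finally, a short argument is needed to show that these two conclusions together with the state property are also sufficient for the full \kmsb condition: this is done by verifying the identity $\varphi(AB)=e^{-\beta\braket{p-q}{r}}\varphi(BA)$ on generators $A=L_pL_xL_y^*L_q^*$ and $B=L_{p'}L_{x'}L_{y'}^*L_{q'}^*$ by expanding both sides using Nica-covariance and applying the trace and formula.

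For (c), \lemref{lemma:reconstruction} already establishes $T_\beta(\omega_\varphi)=\varphi$ for every \kmsb state $\varphi$, so $T_\beta$ is surjective and $\varphi\mapsto\omega_\varphi$ is a right inverse. For the other direction, if $X\in Q\Tt_r(\NN^n,\sigma_\Theta)Q$ then $X=QXQ$ and $QL_l^*Q=0$ for $l\neq 0$ (since $L_l^*Q=0$ by \lemref{lemma:Q-properties}(1), hence $QL_l^*Q=Q(L_l^*Q)=0$); thus only the $l=0$ term of $Z(\beta)T_\beta(\omega)(X)$ survives and equals $\omega(X)$, proving $\omega_{T_\beta(\omega)}=\omega$. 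For (d), affinity is linearity in $\omega$, and weak*-continuity of both maps follows from dominated convergence applied to the uniformly bounded summands $|\omega(QL_l^*XL_lQ)|\leq\|X\|$ against the summable weights $e^{-\beta\braket{l}{r}}$. The main technical obstacle is (b): carefully tracking the cocycle phases generated by the Nica-covariance commutations, and establishing that the two necessary conditions from \proref{prop:KMS-characterisation} are also sufficient for the \kmsb property.
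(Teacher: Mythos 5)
Your proposal is correct, and its skeleton coincides with the paper's: show $T_\beta(\omega)$ is a \kmsb state, get surjectivity from \lemref{lemma:reconstruction}, get injectivity by restricting to the corner, and conclude by compactness. The one place where you genuinely depart from the paper is step (b). The paper never asserts that the two necessary conditions of \proref{prop:KMS-characterisation} are sufficient; instead it verifies $e^{\beta\braket{p}{r}}T_\beta(\omega)(XY)=e^{\beta\braket{q}{r}}T_\beta(\omega)(YX)$ directly on generators by expanding both sides as series over $l\in\NN^k\times 0_d$ (equations \eqref{vlequ1}--\eqref{vlequ4}) and matching them term by term using the trace property of $\omega$ on the corner. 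You instead propose a converse to \proref{prop:KMS-characterisation}: a state satisfying \eqref{equ:charac-KMS} and restricting to a trace on $\Cst(L_x\mid x\in 0_k\times\NN^d)$ is automatically \kmsb. This converse is true and your route works, but be aware that the ``short argument'' you defer is exactly where the paper's computation lives. After using \eqref{equ:charac-KMS} to collapse $\varphi(AB)$ and $\varphi(BA)$ for $A=L_pL_xL_y^*L_q^*$ and $B=L_aL_sL_t^*L_b^*$, one must check: (i) the two Kronecker conditions agree (both say $p-q=b-a$); (ii) the exponentials agree, which uses $(q\vee a)-a=(b\vee p)-b$ when $p-b=q-a$; and (iii) the residual cocycle scalars --- four factors of the form $\sigma_\Theta(\,\cdot\,,(q\vee a)-q)^{\pm 1}$ together with $\sigma_\Theta(x-y,\cdot)^2\sigma_\Theta(s-t,\cdot)^2$ --- coincide on both sides, which they do precisely because $(q\vee a)-q=(b\vee p)-p$ and $(q\vee a)-a=(b\vee p)-b$ under the constraint in (i). So the bookkeeping is the same amount of work as in the paper, just packaged as a reusable sufficiency lemma (which is a mild strengthening of what the paper states). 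The remaining ingredients of your proposal --- positivity of $T_\beta(\omega)$ via $QL_l^*X^*XL_lQ=(XL_lQ)^*(XL_lQ)$, the trace property on the fixed subalgebra from the symmetry of the phase $\sigma_\Theta(x-y+s-t,l)^2$ together with \lemref{lemma:Q-properties}, and the left-inverse computation using $L_l^*Q=0$ for $l\neq 0$ --- are all correct and agree with the paper where the paper spells them out.
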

\bp
Let $\omega$ be a tracial state on the corner $Q \Tt_r(\NN^n,\sigma_\Theta) Q$. Clearly $T_{\beta}( \omega)$ is a state of $\Tt_r(\NN^n,\sigma_\Theta)$. 
First we verify that $T_{\beta}( \omega)$ satisfies \eqref{equ:charac-KMS}. Let $X=L_p L_xL_y^*L_q^*$, where $p,q\in\NN^k\times 0_d$, $x,y\in 0_k\times \NN^d$. Given $ l\in \NN^k\times 0_d$,  Lemma~\ref{lemma:Q-properties}(1) implies that if the elements $l\vee p-l $ and $l\vee q-l$ are not both zero, then $Q L_l^* X L_l Q=0$. Since $l\vee p-l=0=l\vee q-l $ is equivalent to $l\geq p\vee q$, we may restrict the sum to $l\geq p\vee q$ and simplify $L_l^*L_p = \sigma_\Theta(p,l-p) L^*_{l-p}$ using \defref{dfn-covariance} to get
\begin{align}\label{eqn:Tbeta}
T_{\beta}( \omega)(X)&=\frac{1}{Z(\beta)} \sum_{\substack{\underset{l\geq p\vee q}{l\in \NN^k\times 0_d}}}\tfrac{\sigma(p,l-p)}{\sigma(q,l-q)}  \, e^{-\beta\braket{l}{r}} \, \omega(Q L_{l-p}^* L_xL_y^*L_{l-q} Q).
\end{align}
When $p=q$, a change of the index of summation to $m = l-p$ gives \begin{eqnarray}\label{eqn:Tbetapradepois}
T_{\beta}( \omega)(X)&=&\frac{1}{Z(\beta)} \sum_{\substack{\underset{l\geq p}{l\in \NN^k\times 0_d}}} \, e^{-\beta\braket{l}{r}}  \omega(Q L_{l-p}^* L_xL_y^*L_{l-p} Q) \notag\\
&=&\frac{1}{Z(\beta)} \sum_{\substack{\underset{m\geq 0}{m\in \NN^k\times 0_d}}} \, e^{-\beta \braket{p}{r}} e^{-\beta \braket{m}{r}}  \omega(Q L_{m}^* L_xL_y^*L_{m} Q) \\
&= &e^{-\beta \braket{p}{r}}T_{\beta}( \omega)(L_xL_y^*), \notag
\end{eqnarray} To see that $T_\beta(\omega)(X)=0$ if $p\neq q$, it suffices to show $Q L_{l-p}^* L_xL_y^*L_{l-q} Q$ vanishes for all $l\geq p \vee q$, $l\in\NN^k\times 0_d$. By \lemref {lem:relprimecomm}  the projection $L_{l-p}L_{l-p}^*$ commutes $L_sL_t^*$ with $s,t\in 0_k\times\NN^d$. In addition, it follows from Lemma~\ref{lemma:Q-properties}(2) that $L_{l-p}^* L_xL_y^*L_{l-p}$ commutes with~$Q$. So
\begin{equation*}\label{equ:vp-l}
\begin{aligned}
QL_{l-p}^* L_xL_y^*L_{l-q}Q&=QL_{l-p}^* L_xL_y^*L_{l-p}L_{l-p}^*L_{l-q} Q=QL_{l-p}^* L_xL_y^*L_{l-p}QL_{l-p}^*L_{l-q} Q.\end{aligned}
\end{equation*} Since $p\neq q$ implies that at least one of the elements  $((l-p)\vee (l-q))-(l-p)$ and $((l-p)\vee (l-q))-(l-q)$ is nonzero, \lemref{lemma:Q-properties}(1) yields $QL_{l-p}^*L_{l-q} Q=0$ and hence $QL_{l-p}^* L_xL_y^*L_{l-q}Q=0$. Therefore $T_{\beta}( \omega)(X)=0$ when $p\neq q$, as asserted. 

We aim to prove next that $T_\beta(\omega)$ satisfies the \kmsb condition for the dynamics $\alpha$. Let $p,q,a,b \in \NN^k \times 0_d$ and $x, y, s,t \in 0_k \times \NN^d$ and consider the elements $X= L_pL_xL_y^*L_q^*$ and $Y = L_aL_sL_t^*L_b^*$, so that $\alpha_{i\beta}(X) =  e^{-\beta \braket{p-q}{r}} X $. In order to conclude that $T_\beta(\omega)$ is a \kmsb state it suffices to show that
$ T_{\beta}( \omega)(XY)  e^{\beta \braket{p}{r}} =  e^{\beta \braket{q}{r}}T_{\beta}( \omega)( YX)$. We may assume that $p-q+ a -b =0$, for otherwise both sides vanish because we have shown that $T_\beta(\omega)$ satisfies \eqref{equ:charac-KMS}. 
By definition 
\[ 
T_\beta(XY) =    \sum_{\substack{l\in \NN^k\times 0_d}} \frac{e^{-\beta\braket{l}{r}}}{Z(\beta)}\omega(QL_l^*L_pL_xL_y^*L_q^* L_aL_sL_t^*L_b^* L_l Q). 
\]
The argument leading to \eqref{eqn:Tbeta} shows that we may restrict the summation to  $l\geq p\vee b$, in which case
$L_l^* L_p = \sigma_\Theta(p,l-p)L_{l-p}^*$ and similarly $L_b^* L_l = \overline{\sigma_\Theta(b,l-b)} L_{l-b}$ by \defref{dfn-covariance}(3). Thus 
 \begin{equation}\label{vlequ1}
\begin{aligned}
T_\beta(XY) &= \sum_{\substack{\underset{l\geq  p\vee b}{l\in \NN^k\times 0_d}}} \tfrac{e^{-\beta\braket{l}{r}}}{Z(\beta)}\tfrac{\sigma(p,l-p)}{\sigma(b,l-b)}
\, \omega(Q L_{l-p}^* L_xL_y^*L_q^* L_aL_sL_t^*L_{l-b} Q) \\
&=  \sum_{\substack{\underset{l\geq  p\vee b}{l\in \NN^k\times 0_d}}} \tfrac{e^{-\beta\braket{l}{r}}}{Z(\beta)}\tfrac{\sigma_\Theta(p,l-p)}{\sigma_\Theta(b,l-b)}
\, \omega(Q L_{l-p}^* L_xL_y^* (L_{l-p}L_{l-p}^*) L_q^* L_a (L_{l-b}L_{l-b}^*)L_sL_t^*L_{l-b} Q)\\
&= \sum_{\substack{\underset{l\geq  p\vee b}{l\in \NN^k\times 0_d}}} \tfrac{e^{-\beta\braket{l}{r}}}{Z(\beta)}\tfrac{\sigma_\Theta(p,l-p)}{\sigma_\Theta(b,l-b)}
\, \omega((Q L_{l-p}^* L_xL_y^* L_{l-p}Q) L_{l-p}^* L_q^* L_a L_{l-b}  (Q L_{l-b}^*L_sL_t^*L_{l-b} Q)), 
\end{aligned}
\end{equation}  
where we have used that $L_{l-p}L_{l-p}^*$ commutes with $L_xL_y^*$, that  $L_{l-b}L_{l-b}^*$ commutes with 
$L_sL_t^*$, and that $Q$ commutes with $L_{l-p}^* L_xL_y^* L_{l-p} $ and with $L_{l-b}^*L_sL_t^*L_{l-b}$. Notice also that the product in the middle simplifies to a scalar, namely
\[
L_{l-p}^* L_q^* L_a L_{l-b} = \tfrac{\sigma_\Theta(a,l-b)}{\sigma_\Theta(q,l-p)} L_{q+l-p}^* L_{a+l-b} = \tfrac{\sigma_\Theta(a,l-b)}{\sigma_\Theta(q,l-p)}
\]
because $q-p = a -b$. When we substitute this in the formula and 
change the index of summation to $m = l-p$ we get \begin{multline}
\label{vlequ2}
e^{\beta \braket{p}{r}} T_\beta(XY) = \\
= \sum_{\substack{\underset{m+p\geq b}{m\in \NN^k\times 0_d}}} \tfrac{e^{-\beta \braket{m}{r}}}{Z(\beta)}
\tfrac{\sigma_\Theta(p,m)\sigma_\Theta(a, p+m-b)}{\sigma_\Theta(b,m+p-b) \sigma_\Theta(q,m)} \,
\omega((Q L_{m}^* L_xL_y^*L_{m} Q) (Q L_{m+p-b}^*L_sL_t^*L_{m+p-b} Q)).
\end{multline}

Exchanging now the roles of $X$ and $Y$ and carrying out a computation like \eqref{vlequ1} gives
 \begin{equation*}\label{vlequ3}
\begin{aligned}
T_\beta(YX) &=\\
&=\sum_{\substack{\underset{l\geq a\vee q}{l\in \NN^k\times 0_d}}} 
\tfrac{e^{-\beta\braket{l}{r}}}{Z(\beta)}\tfrac{\sigma_\Theta(a,l-a)}{\sigma_\Theta(q,l-q)}
\,\omega((Q L_{l-a}^* L_sL_t^* L_{l-a}Q) L_{l-a}^* L_b^* L_p L_{l-q}  (Q L_{l-q}^*L_xL_y^*L_{l-q} Q))
\end{aligned}
\end{equation*}  
Substituting the scalar $L_{l-a}^* L_b^* L_p L_{l-q} =\overline{\sigma_\Theta(b,l-a)}  \sigma_\Theta(p, l-q)$ in the middle  and changing the index of summation to $m = l-q$,  we get
\begin{multline}
\label{vlequ4}
e^{\beta \braket{q}{r}}T_\beta(YX) = \\
=  \sum_{\substack{\underset{m+q\geq  a}{m\in \NN^k\times 0_d}}} 
 \tfrac{e^{-\beta \braket{m}{r}}}{Z(\beta)}\tfrac{\sigma_\Theta(a,m+q-a) \sigma_\Theta(p, m)}{\sigma_\Theta(q,m) \sigma_\Theta( b,q+m-a)}
\, \omega((Q L_{m+q-a}^* L_sL_t^* L_{m+q-a}Q) (Q L_{m}^*L_xL_y^*L_{m} Q)).
\end{multline}

Since $q-a =p-b$,  so that $m+q\geq a$ iff $m + p \geq b$, and since
$\omega$ is a trace on the corner, the two series in \eqref{vlequ2} and \eqref{vlequ4} are the same, term by term, which shows
$ e^{\beta \braket{p}{r}} T_{\beta}( \omega)(XY)   =  e^{\beta \braket{q}{r}}T_{\beta}( \omega)( YX)$. This completes the proof that $T_\beta$ maps tracial states of $Q\Tt_r(\NN^n,\sigma_\Theta)Q$ to \kmsb states of $( \Tt_r(\NN^n,\sigma_\Theta),\alpha)$.

\bigskip

 Notice that $T_\beta$ is surjective by Lemma~\ref{lemma:reconstruction}  and  injective because the restriction of $T_\beta(\omega)$  to $Q\Tt_r(\NN^n,\sigma_\Theta)Q$
  is equal to $Z(\beta) \omega$. Clearly it is also an affine map; its inverse is given by the map $\varphi \mapsto \omega_\varphi$ of Lemma~\ref{lemma:reconstruction}, which is obviously weak* continuous. 
 We then conclude that $T_\beta$ is a weak* homeomorphism, as it is a bijection with continuous inverse between compact Hausdorff spaces. This completes the proof of the proposition. 
\ep

\section{\kmsb states and traces on noncommutative tori}\label{sec:traces}
Our first goal in this section is to show that the corner $Q\Tt_r(\NN^n,\sigma_{\Theta})Q$ is isomorphic to  the $\Cst$\nb-subalgebra of~$\Tt_r(\NN^n,\sigma_{\Theta})$ generated by $\{L_x\mid x\in 0_k\times\NN^d\}$, which is itself isomorphic to the Toeplitz noncommutative torus $\Tt_r(\NN^d,\sigma_{\Thetad})$ associated to the restriction $\Thetad$ of~$\Theta$ to the last $d$ coordinates. 
We  then show that the traces of $\Tt_r(\NN^d,\sigma_{\Thetad})$ factorize through its canonical quotient~$\athd$,  and in fact come from states of its center, which is  a classical torus of dimension equal to the degeneracy index of $\Thetad$. 
 
\begin{lemma}\label{lem:iso-corner} Let $Q$ be the projection from \eqref{eqn:projQ} and denote by $\Thetad$ the  lower right $d\times d$ corner of~$\Theta$. Then $\Cst(L_x: x\in  0_k\times\NN^d)$  is canonically isomorphic to the Toeplitz noncommutative torus $ \Tt_r(\NN^d,\sigma_{\Thetad}) $, and the map  $\rho_Q\colon \Cst(L_x: x\in  0_k\times\NN^d) \to Q \Tt_r(\NN^n,\sigma_\Theta) Q$ given by the compression $X\mapsto QXQ$ is an isomorphism.
\bp 
The set of isometries $\{L_x\mid x\in 0_k\times \NN^d\}$ satisfies relations (1)--(3) from Definition~\ref{dfn-covariance}. Using the obvious identification $\NN^d\cong0_k\times \NN^d$, we obtain a covariant isometric $\sigma_{\Thetad}$\nb-representation of~$\NN^d$ in $\Cst(L_x: x\in 0_k\times\NN^d)$. By \corref{cor:faithful_charac}, this gives a  homomorphism from $\Tt_r(\NN^d,\sigma_{\Thetad})$ onto $\Cst(L_x: x\in 0_k\times\NN^d)$ mapping $L_x^{\sigma_{\Thetad}}$ to $L_x$.  It is faithful because $\prod_{\substack{j=k+1}}^n(1-L_{e_j}L_{e_j}^*)\neq 0.$
Notice that $Q$ is the projection of 
$\ell^2(\NN^n)$ onto the subspace $\ell^2(0_k\times \NN^d)$, which is invariant for $\Cst(L_x: x\in  0_k\times\NN^d)$
and hence $\rho_Q\colon \Cst(L_x: x\in  0_k\times\NN^d) \to Q \Tt_r(\NN^n,\sigma_\Theta) Q$ is an isomorphism. 
\ep
\end{lemma}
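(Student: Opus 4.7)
My plan is to prove both assertions by exhibiting concrete spatial realizations and then invoking Corollary~\ref{cor:faithful_charac} together with Lemma~\ref{lemma:Q-properties}(3).

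First I would establish the isomorphism $\Tt_r(\NN^d,\sigma_{\Thetad}) \cong \Cst(L_x : x \in 0_k \times \NN^d)$. Under the identification $\NN^d \cong 0_k \times \NN^d$, the restriction of the cocycle $\sigma_\Theta$ to $0_k \times \NN^d$ equals $\sigma_{\Thetad}$, because only the lower right $d \times d$ block of $\Theta$ contributes to $\braket{x}{\Theta y}$ when $x,y$ have vanishing first $k$ coordinates. Thus the family $\{L_x : x \in 0_k \times \NN^d\}$ satisfies the defining relations (1)--(3) of Definition~\ref{dfn-covariance} for a covariant isometric $\sigma_{\Thetad}$\nb-representation of $\NN^d$. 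By Corollary~\ref{cor:faithful_charac} this induces a surjective homomorphism $\Tt_r(\NN^d,\sigma_{\Thetad}) \to \Cst(L_x : x \in 0_k \times \NN^d)$ that is faithful if and only if $\prod_{j=1}^d (1 - L_{e_{k+j}} L_{e_{k+j}}^*) \neq 0$. Spatially, each factor $1 - L_{e_{k+j}} L_{e_{k+j}}^*$ is the projection onto the subspace of $\ell^2(\NN^n)$ spanned by basis vectors $\delta_y$ with $y_{k+j} = 0$, so the product is the nonzero projection onto $\ell^2(\NN^k \times 0_d)$ (which contains $\delta_0$).

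Next, for the compression map I would argue spatially. By the same reasoning, the projection $Q = \prod_{i=1}^k (1 - L_{e_i} L_{e_i}^*)$ coincides with the orthogonal projection of $\ell^2(\NN^n)$ onto $\ell^2(0_k \times \NN^d)$. This subspace is invariant under every generating isometry $L_x$ with $x \in 0_k \times \NN^d$, since $L_x \delta_y = \sigma_\Theta(x,y) \delta_{x+y}$ and $x+y$ has vanishing first $k$ coordinates whenever $x$ and $y$ do. Consequently $Q$ commutes with every element of $\Cst(L_x : x \in 0_k \times \NN^d)$, which ensures that the compression $X \mapsto QXQ = XQ$ is a well-defined \Star{}homomorphism into the corner $Q \Tt_r(\NN^n,\sigma_\Theta) Q$.

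Surjectivity of $\rho_Q$ is immediate from Lemma~\ref{lemma:Q-properties}(3), which identifies $Q \Tt_r(\NN^n,\sigma_\Theta) Q$ with $\clsp\{Q L_x L_y^* Q : x,y \in 0_k \times \NN^d\}$; each such element lies in the image of $\rho_Q$. For injectivity, note that via the isomorphism of the first paragraph, the action of $\Cst(L_x : x \in 0_k \times \NN^d)$ on $Q \ell^2(\NN^n) = \ell^2(0_k \times \NN^d)$ coincides (up to the obvious unitary identification) with the left regular $\sigma_{\Thetad}$\nb-representation of $\NN^d$ on $\ell^2(\NN^d)$, which is faithful by definition of $\Tt_r(\NN^d,\sigma_{\Thetad})$. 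Hence $XQ = 0$ forces $X = 0$, and $\rho_Q$ is an isomorphism.

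The only genuinely delicate point, and the step I would write out most carefully, is the verification that $Q$ is exactly the spatial projection onto $\ell^2(0_k \times \NN^d)$ and that the induced restriction to this invariant subspace is a copy of the left regular $\sigma_{\Thetad}$\nb-representation; once this is in hand, both faithfulness statements (the nonvanishing of the product of defect projections needed to apply Corollary~\ref{cor:faithful_charac}, and the injectivity of the compression) reduce to the same transparent spatial fact.
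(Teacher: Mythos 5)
Your proposal is correct and follows essentially the same route as the paper: identify $\{L_x \mid x\in 0_k\times\NN^d\}$ as a covariant isometric $\sigma_{\Thetad}$\nb-representation, apply Corollary~\ref{cor:faithful_charac} with the nonvanishing of $\prod_{j=k+1}^{n}(1-L_{e_j}L_{e_j}^*)$, and recognise $Q$ as the orthogonal projection onto the reducing subspace $\ell^2(0_k\times\NN^d)$ on which the compression restricts to a copy of the left regular $\sigma_{\Thetad}$\nb-representation. Your write-up merely makes explicit the details (surjectivity via Lemma~\ref{lemma:Q-properties}(3), injectivity via faithfulness of the left regular representation) that the paper compresses into its final sentence.
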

The isomorphisms from the lemma above allow us to express our characterisation of \kmsb states in terms of traces on $\Tt_r(\NN^d,\sigma_{\Thetad})$.

\begin{proposition} \label{pro:KMSfromtraces} Let $\rho_Q$ be the isomorphism from \lemref{lem:iso-corner} and identify $\Cst(L_x\mid x\in 0_k\times \NN^d) $ with $\Tt_r(\NN^d,\sigma_{\Thetad})$ canonically.
For each tracial state $\tau$ of $\Tt_r(\NN^d,\sigma_{\Thetad})$ there is  a \kmsb state  of $(\Tt_r(\NN^n,\sigma_\Theta),\alpha)$
determined by
\begin{equation}\label{eqn:TbetaEuler}
T_{\beta}( \tau\circ \rho_Q\inv)(L_p L_x L_y^* L_q^*) = \delta_{p,q}\, \tau( L_xL_y^*)  \prod_{j=1}^k\frac{e^{-\beta r_j p_j}(1 - e^{-\beta r_j })}{1 - e^{-\beta r_j +2\pi i \braket{\Theta (x-y)}{e_j}}},
\end{equation} 
where $x,y \in 0_k\times \NN^d \cong \NN^d$. 
The map $\tau \mapsto T_{\beta}( \tau\circ \rho_Q\inv)$ is an affine weak* homeomorphism
of the  tracial state space of $\Tt_r(\NN^d,\sigma_{\Thetad})$ onto the  simplex of \kmsb states of $(\Tt_r(\NN^n,\sigma_\Theta),\alpha)$.
\begin{proof}
The (tracial) states of the corner $Q\Tt_r(\NN^n,\sigma_\Theta)Q$ come from the (tracial) states of 
$\Tt_r(\NN^d,\sigma_{\Thetad})$ via the map $\tau \mapsto \tau \circ \rho_Q\inv $. Combining this with the map $T_\beta$ from \proref{isomorphisms kmsbeta states} we see that $\tau \mapsto T_{\beta}( \tau\circ \rho_Q\inv)$ is an affine weak* homeomorphism
of the  tracial state space of $\Tt_r(\NN^d,\sigma_{\Thetad})$ onto the simplex of \kmsb states of $(\Tt_r(\NN^n,\Theta),\alpha)$. 

%The inverse of $\rho_Q\colon \Tt_r(\NN^d,\sigma_{\Thetad}) \to Q\Tt_r(\NN^n,\sigma_\Theta)Q$ satisfies  for all $x,y\in0_k\times \NN^d$. 
In order to  write $T_{\beta}( \tau\circ \rho_Q\inv)$ in \eqref{eqn:Tbeta0} in terms of the tracial state~$\tau$ of 
$\Tt_r(\NN^d,\sigma_{\Thetad})$, we first use  \defref{dfn-covariance}(3) and the fact that $\sigma_\Theta$ is a symplectic bicharacter to write
 $$Q L_{m}^* L_xL_y^*L_{m} Q=\sigma_\Theta(x, m)^2\sigma_\Theta(m,y)^2Q L_x L_y^*Q = \sigma_\Theta(x-y, m)^2 Q L_x L_y^*Q$$ 
 for all $m\in\NN^k\times 0_d$ and $x,y \in 0_k \times \NN^d$. Next, we use  \eqref{eqn:Tbetapradepois}  with $\tau \circ \rho_Q\inv $ playing the role of $\omega$, observing that
 $ \rho_Q\inv (QL_x L_y^*Q) = L_xL_y^*$, and  we conclude that 
 \begin{align}\label{eqn:Tbeta2}
T_{\beta}( \tau\circ \rho_Q\inv)(L_p L_x L_y^* L_q^*) &= \, \tau( L_xL_y^*) \frac{\delta_{p,q}e^{-\beta \braket{p}{r}}}{Z(\beta)} \sum_{m\in \NN^k\times 0_d}\  \, e^{-\beta \braket{m}{r}} \, \sigma_\Theta(x-y,m)^2.
\end{align}
The rest of the proof is a computation to obtain an Euler product formula for the series involving the cocycle. Observe that  $\braket{m}{r} = \braket{r}{m} \geq   0$ and that $\sigma_\Theta(x-y,m)^2 = e^{-2\pi i \braket{x-y}{\Theta m}} =e^{2\pi i \braket{\Theta(x-y)}{m}} $. The series from~\eqref{eqn:Tbeta2} then becomes 
\begin{eqnarray*} 
%\[
 \sum_{m\in \NN^k\times 0_d}\  \, e^{-\beta \braket{m}{r}} \, \sigma_\Theta(x-y,m)^2 
&=&  \sum_{m\in \NN^k\times 0_d}\  e^{\braket{-\beta r+2\pi i \Theta(x-y)}{m}}\\
&=&  \sum_{m\in \NN^k\times 0_d}\  e^{\braket{-\beta r+2\pi i \Theta(x-y)}{m}}\\
&=&  \sum_{m\in \NN^k\times 0_d}\  e^{\sum_{j=1}^k\braket{-\beta r+2\pi i \Theta(x-y)}{e_j} \braket{e_j}{m}}\\
&=&  \sum_{m\in \NN^k\times 0_d}\  \prod_{j=1}^k \bigg(e^{\braket{-\beta r+2\pi i \Theta(x-y)}{e_j} }\bigg)^{m_j}.
\end{eqnarray*}
To simplify the notation, we define $A_j \coloneqq e^{\braket{-\beta r +2\pi i \Theta (x-y)}{e_j}}$  and notice that $|A_j| <1$ because $\braket{\beta r}{e_j} >0$ for $1\leq j \leq k$. 
Then 
\[
 \sum_{m\in \NN^k\times 0_d}\  \, e^{-\beta \braket{m}{r}} \, \sigma_\Theta(x-y,m)^2   
= \sum_{m\in \NN^k\times 0_d}\  \prod_{j=1}^k A_j^{m_j}
= \prod_{j=1}^k  \sum_{n=0}^\infty A_j^n 
= \prod_{j=1}^k  \frac{1}{1-A_j} .
\]
Thus we obtain \eqref{eqn:TbetaEuler} by substituting into  \eqref{eqn:Tbeta2} the above expression and the usual Euler product expansion for $Z(\beta)^{-1}$ from \eqref{eqn:phiofQcomputation},  and replacing $e^{-\beta \braket{p}{r}}$ by the product $\prod_{\substack{j=1}}^ke^{-\beta r_jp_j}$. 
 \end{proof}
\end{proposition}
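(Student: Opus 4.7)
The parametrisation statement is essentially automatic from the earlier results. The map $\tau \mapsto \tau \circ \rho_Q^{-1}$ is an affine weak* homeomorphism from the tracial state space of $\Tt_r(\NN^d,\sigma_{\Theta_d})$ onto that of the corner $Q\Tt_r(\NN^n,\sigma_\Theta)Q$, since $\rho_Q$ is an isomorphism by \lemref{lem:iso-corner}. Composing with the affine weak* homeomorphism $T_\beta$ of Proposition~\ref{isomorphisms kmsbeta states} yields the asserted parametrisation of the \kmsb simplex, so the substantive content of the proof is the explicit formula~\eqref{eqn:TbetaEuler}.

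The plan is to feed $\omega = \tau \circ \rho_Q^{-1}$ and $X = L_p L_x L_y^* L_q^*$ into \eqref{eqn:Tbeta0}. The computations already carried out in the proof of Proposition~\ref{isomorphisms kmsbeta states}, culminating in \eqref{eqn:Tbetapradepois}, immediately yield that the expression vanishes when $p \neq q$, and that for $p = q$ it reduces, after the change of summation index $m = l - p$, to
\[
\frac{e^{-\beta \braket{p}{r}}}{Z(\beta)} \sum_{m \in \NN^k \times 0_d} e^{-\beta \braket{m}{r}} \, (\tau \circ \rho_Q^{-1})(Q L_m^* L_x L_y^* L_m Q).
\]

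The next step is to collapse $L_m^* L_x L_y^* L_m$ to a scalar multiple of $L_x L_y^*$. For $m \in \NN^k \times 0_d$ and $x, y \in 0_k \times \NN^d$, one has $m \vee x = m + x$ and $m \vee y = m + y$, so two applications of \defref{dfn-covariance}(3) produce the scalar factors $\sigma_\Theta(x, m)^2$ and $\sigma_\Theta(m, y)^2$; combining them using that $\sigma_\Theta$ is a symplectic bicharacter and that $\Theta$ is antisymmetric collapses the product into the single scalar $\sigma_\Theta(x - y, m)^2$. Thus $L_m^* L_x L_y^* L_m = \sigma_\Theta(x - y, m)^2 L_x L_y^*$, and using $\rho_Q^{-1}(Q L_x L_y^* Q) = L_x L_y^*$ pulls $\tau(L_x L_y^*)$ cleanly out of the sum.

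What remains is a direct calculation. Writing $\sigma_\Theta(x - y, m)^2 = e^{2\pi i \braket{\Theta(x - y)}{m}}$ and $e^{-\beta \braket{m}{r}} = \prod_{j=1}^k e^{-\beta r_j m_j}$, the sum over $m \in \NN^k \times 0_d$ factors as a product of $k$ geometric series, namely $\sum_{n \geq 0} A_j^n = (1 - A_j)^{-1}$ with $A_j \coloneqq e^{-\beta r_j + 2\pi i \braket{\Theta(x - y)}{e_j}}$; absolute convergence is guaranteed by $|A_j| = e^{-\beta r_j} < 1$. Combining this with the Euler product $Z(\beta)^{-1} = \prod_{j=1}^k (1 - e^{-\beta r_j})$ from \eqref{eqn:phiofQcomputation} and writing $e^{-\beta \braket{p}{r}}$ as $\prod_{j=1}^k e^{-\beta r_j p_j}$ delivers \eqref{eqn:TbetaEuler}. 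The main conceptual step is the bicharacter simplification of $L_m^* L_x L_y^* L_m$; everything else is routine bookkeeping with geometric series.
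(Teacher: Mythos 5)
Your proposal is correct and follows essentially the same route as the paper's proof: reduce the parametrisation to the composition of $\tau\mapsto\tau\circ\rho_Q^{-1}$ with the homeomorphism $T_\beta$ of Proposition~\ref{isomorphisms kmsbeta states}, collapse $L_m^*L_xL_y^*L_m$ to $\sigma_\Theta(x-y,m)^2L_xL_y^*$ via \defref{dfn-covariance}(3) and the symplectic bicharacter property, and evaluate the resulting sum as a product of $k$ geometric series against the Euler expansion of $Z(\beta)^{-1}$. No gaps.
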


%% we see that we need to express the argument of $\omega$ in the form $Q L_xL_y^*Q$, for then we can write
%%$\omega (QL_xL_y^*Q) = \tau(L_xL_y^*) $. 
%% Since for each $l\in\NN^k\times0_d$, $l\geq p$ we have
%
%%to transform a given (tracial) state $\tau$ on $\Tt_r(\NN^d,\sigma_{\Thetad})$ into a tracial state $\omega$ on $Q\Tt_r(\NN^n,\sigma_\Theta)Q$, and thus
%
%Using \defref{dfn-covariance}(3) on the summands  of the series \eqref{eqn:Tbeta} with $p=q$, we see that 
%\begin{multline*}
%Q L_{l-p}^* L_xL_y^*L_{l-p} Q=Q\overline{\sigma_\Theta(l-p,x)}^2 L_xL^*_{l-p} \overline{\sigma\Theta(y,l-p)}^2  L_{l-p}L_y^*Q
%= \\
%Q\overline{\sigma_\Theta(l-p,x)}^2 L_xL^*_{l-p} \overline{\sigma\Theta(y,l-p)}^2  L_{l-p}L_y^*Q
%=\frac{\sigma(x,l-p)}{\sigma(y,l-p)} QL_xL_y^*Q =  \frac{\sigma(x,l-p)}{\sigma(y,l-p)}\rho_Q(L_xL_y^*).
%\end{multline*}
% With this simplification, the parametrisation of \kmsb states given in 
% \proref{isomorphisms kmsbeta states} composed with the isomorphism $\tau \mapsto \tau \circ \rho_Q\inv$ gives \eqref{eqn:Tbeta2}.
\def\Tht{D}

The remainder of this section is dedicated to provide a concrete description of the space of tracial states of  $\Tt_r(\NN^d,\sigma_{\Thetad})$. Since our considerations are general, we momentarily adjust the notation and consider a generic~$d\times d$ matrix~$\Tht$. We begin by showing that the traces of $\Tt_r(\NN^d,\sigma_{\Tht})$ factor through the noncommutative torus $\A_{\Tht}$.

\begin{lemma}\label{tracesfromcentre}
Let $\Tht\in M_d(\RR)$ be an antisymmetric matrix and let $\A_\Tht$ be the associated noncommutative $d$\nb-torus, with generating unitaries $U_j$, for $j = 1,\ldots , d$. The map $L_{e_j} \mapsto U_j$  extends to a surjective homomorphism 
$\pi\colon\Tt_r(\NN^d,\sigma_{\Tht}) \to \A_{\Tht}$ which, in turn, induces 
an affine homeomorphism $\tau \mapsto \tau \circ \pi$ from  the space of tracial states of $\A_{\Tht}$ onto the space of tracial states of $\Tt_r(\NN^d,\sigma_{\Tht})$.

\end{lemma}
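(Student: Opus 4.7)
The plan is to construct the homomorphism $\pi$ first, and then to argue that every tracial state of $\Tt_r(\NN^d,\sigma_\Tht)$ automatically factors through~$\pi$. For the first part, I would verify that the unitaries $U_1,\dots,U_d$ in $\A_\Tht$ satisfy both relations in $(\mathcal{R}_\Tht)$ of \proref{pro:presentationtoeplitztori}. The first relation is the defining relation of the noncommutative torus. The second is immediate from it: since $U_k U_j=e^{2\pi i\theta_{j,k}}U_jU_k$, multiplying on the left by~$U_j^*$ and using unitarity of~$U_j$ gives $U_j^*U_k=e^{2\pi i\theta_{j,k}}U_kU_j^*$ for $j\neq k$. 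An application of \proref{pro:presentationtoeplitztori} then yields a homomorphism $\pi\colon\Tt_r(\NN^d,\sigma_\Tht)\to\A_\Tht$ with $\pi(L_{e_j})=U_j$, which is surjective because the $U_j$ generate~$\A_\Tht$. (Alternatively, one may identify $\pi$ with the composition of the quotient map from \proref{pro:toeplitz-extension} with the canonical isomorphism $\Cst_r(\ZZ^d,\tilde\sigma_\Tht)\cong\A_\Tht$.)

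Next, the map $\tau\mapsto\tau\circ\pi$ is clearly affine, weak* continuous, and takes tracial states to tracial states. Injectivity follows from surjectivity of~$\pi$, since tracial states of~$\A_\Tht$ are determined by their values on the generators $U_1,\dots,U_d$, which lie in the image of~$\pi$.

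The main step, and the only place where a real argument is needed, is surjectivity: I want to show that every tracial state $\phi$ of $\Tt_r(\NN^d,\sigma_\Tht)$ vanishes on the ideal $\mathcal{I}$ generated by the projections $Q_j\coloneqq1-L_{e_j}L_{e_j}^*$, so that $\phi$ descends to the quotient $\A_\Tht$ by \proref{pro:toeplitz-extension}. The key observation is that, by the trace property together with $L_{e_j}^*L_{e_j}=1$,
\begin{equation*}
\phi(Q_j)=1-\phi(L_{e_j}L_{e_j}^*)=1-\phi(L_{e_j}^*L_{e_j})=0.
\end{equation*}
Since $Q_j$ is a projection, for any $a\in\Tt_r(\NN^d,\sigma_\Tht)$ the trace and Cauchy--Schwarz yield
\begin{equation*}
\phi(Q_j a^* a Q_j)\leq \|a^*a\|\,\phi(Q_j)=0,
\end{equation*}
and then $|\phi(a Q_j)|^2=|\phi(Q_j a)|^2\leq \phi(Q_j a^* a Q_j)\phi(1)=0$. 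Hence $\phi$ vanishes on the (two-sided) ideal generated by each $Q_j$, and therefore on $\mathcal{I}$.

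Combining, $\phi$ factors through a tracial state $\bar\phi$ of $\Tt_r(\NN^d,\sigma_\Tht)/\mathcal{I}\cong\A_\Tht$ with $\bar\phi\circ\pi=\phi$, proving surjectivity. The inverse map is visibly weak* continuous, so we have a homeomorphism between compact Hausdorff spaces. I expect the Cauchy--Schwarz argument for $\mathcal{I}$ to be the only real content; everything else is bookkeeping with the presentation and the results already established in Section~\ref{ToepExtNoncommTori}.
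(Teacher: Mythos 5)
Your proof is correct, but the key step is organised differently from the paper's. For surjectivity of $\tau\mapsto\tau\circ\pi$ the paper passes to the GNS representation $(\Hilm[H]_\psi,\pi_\psi)$ of a given trace $\psi$, invokes \lemref{GNSunitaries} (with the trivial dynamics) to see that each $\pi_\psi(L_x)$ is unitary, and then uses the universal property of $\A_\Tht$ to produce a homomorphism $\rho$ with $\rho\circ\pi$ agreeing with $\pi_\psi$ on generators. You instead work directly with the exact sequence of \proref{pro:toeplitz-extension}: you show $\phi(1-L_{e_j}L_{e_j}^*)=0$ from traciality and then run the standard Cauchy--Schwarz argument to conclude that $\phi$ annihilates the ideal $\mathcal I$, hence descends to $\Tt_r(\NN^d,\sigma_\Tht)/\mathcal I\cong \Cst_r(\ZZ^d,\tilde\sigma_\Tht)\cong\A_\Tht$ (the last isomorphism using amenability of $\ZZ^d$, as the paper notes in the introduction). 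In effect you have unpacked the appeal to \cite{aHLRS13}*{Lemma 2.2} that sits inside the proof of \lemref{GNSunitaries}, which makes your argument more self-contained, at the cost of leaning on the full strength of \proref{pro:toeplitz-extension}; the paper's GNS route avoids identifying the quotient and only needs the universal property of $\A_\Tht$. One cosmetic point: in your last display the Cauchy--Schwarz bound for $|\phi(Q_ja)|^2$ should read $\phi(a^*Q_ja)\,\phi(1)$, which by traciality equals $\phi(Q_jaa^*Q_j)\,\phi(1)$ rather than $\phi(Q_ja^*aQ_j)\,\phi(1)$; this is harmless since your preceding estimate shows both quantities vanish.
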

\begin{proof} By Proposition~\ref{pro:presentationtoeplitztori}, the map that sends an isometry~$L_{e_j}\in\Tt_r(\NN^d,\sigma_{\Tht})$ to the unitary $U_j\in \A_{\Tht}$ for each $1\leq j\leq d$  induces a surjective   homomorphism $\pi\colon\Tt_r(\NN^d,\sigma_{\Tht}) \to \A_{\Tht}$. Hence  the map that sends a tracial state~$\tau$ of $\A_{\Tht}$ to the composite $ \tau\circ\pi$ is an injective affine weak* continuous map from the tracial state space of $\A_{\Tht}$  into the tracial state space  of $\Tt_r(\NN^d,\sigma_{\Tht})$. 
To see that this map is a homeomorphism, it suffices to show that it is also surjective as the underlying spaces are compact and Hausdorff.

Suppose $\psi$ is a tracial state on $\Tt_r(\NN^d,\sigma_{\Tht})$ and let 
$(H_{\psi}, \pi_{\psi})$ be its GNS representation. 
By Lemma~\ref{GNSunitaries} each $\pi_{\psi}(L_x)$ is a unitary operator
and by \proref{pro:presentationtoeplitztori} the collection of unitaries $\{\pi_{\psi}(L_{e_j})\mid j=1,\ldots,d\}$ satisfies the defining relations of $\A_{\Tht} $. So by the universal property of~$\A_{\Tht}$, there is a  homomorphism $\rho\colon\A_{\Tht}\to\pi_\psi(\Tt_r(\NN^d,\sigma_{\Tht}))$ such that  $\rho(U_j) =\pi_{\psi}(L_{e_j})$ for each $j =1, \ldots, d$. Thus
$$\psi(L_{e_j}) = \langle \pi_\psi(L_{e_j}) \xi_\psi, \xi_\psi \rangle = \langle \rho(U_j) \xi_\psi, \xi_\psi\rangle = \langle (\rho\circ \pi)(L_{e_j})  \xi_\psi, \xi_\psi\rangle, $$ showing that~$\psi$ factors through the  homomorphism $\pi\colon\Tt_r(\NN^d,\sigma_{\Tht}) \to \A_{\Tht}$ and finishing the proof of the lemma.
\end{proof}

Following \cite[Section 1]{slawny1972}, see also \cite{C.Phillips}, we say that an antisymmetric, real, $d\times d$ matrix~$\Tht$ is  \emph{nondegenerate} if whenever $x\in \ZZ^d$ and~$\braket{ x}{\Tht y} \in\ZZ$ for all~$y\in\ZZ^d$, then~$x=0$. The noncommutative torus $\A_{\Tht}$ is simple if and only if $\Tht$ is nondegenerate. See \cite[Theorem 3.7]{slawny1972} and also \cite[Theorem 1.9]{C. Phillips}. 

There is always a canonical tracial state on $\A_{\Tht}$, which is given by the conditional expectation $E^\gamma\colon \A_{\Tht}\to \A_{\Tht}^\gamma=\CC,$ where $\gamma$ is the canonical gauge action of $\TT^d$ on~$\A_{\Tht}.$ This is the unique tracial state of $\A_{\Tht}$ when $\Tht$ is nondegenerate (see, for example, \cite[Theorem 1.9]{C. Phillips}). In general, we consider the subgroup 
$$H\coloneqq\{x\in\ZZ^d\mid \braket{x}{\Tht y}\in\ZZ\text{ for all } y\in\ZZ^d\}$$
of $\ZZ^d$. We refer to the rank of $H$ as the {\em degeneracy index}  of $\Tht$. We write $m\coloneqq\operatorname{rank}H$ for the degeneracy index of~$\Tht$ and recall that there exist a basis $\{p_1,p_2,\ldots,p_d\}$ of~$\ZZ^d$ and positive integers $a_1,a_2,\ldots,a_m$ with $a_i|a_{i+1}$ such that 
$\{a_1p_1,\ldots,a_mp_m\}$ is a basis for~$H$ (see, for example, \cite[Theorem~2.6]{fuchs2015abelian}). We consider such a basis in what follows.

\def\gp{\gamma^\Lambda}

\begin{lemma}\label{lem:actionandfpa}
Let $\Tht\in M_d(\RR)$ be an antisymmetric matrix and let $\A_\Tht$
be the associated noncommutative torus, with canonical unitary generators $U_1, \cdots, U_d$.  Let $m=\rank H$  be the degeneracy index of $\Tht$ and let $\{p_1,p_2,\ldots,p_d\}$ be a basis for $\ZZ^d$ as above. Consider the compact group
$$
 \Lambda\coloneqq \ZZ_{a_1}\times \cdots\ZZ_{a_m}\times\TT^{d-m} \subset \TT^d,
 $$
 where $\ZZ_{a_j} $ is the cyclic group of order~$a_j$ viewed as 
 the subgroup of~$\TT$ generated by a primitive $a_j{}^{th}$ root of unity. Then
there is a continuous action $\gamma'$ of 
 $\TT^d$ on $\A_\Tht$ satisfying $\gamma'_\lambda (U_{p_i}) = \lambda_i U_{p_i}$ for all $\lambda = (\lambda_1, \cdots, \lambda_d) \in\TT^d$ and such that if we let $\gp$ denote the restriction of $\gamma'$ to $\Lambda$, $\A_\Tht^{\Lambda}$  denote the corresponding fixed point algebra, and $Z(\A_\Tht)$ denote the center of $\A_\Tht$, then 
 \begin{equation}\label{eqn:fpa=C*U=ZAD}
\Cst(U_b\mid b\in H) =\A_\Tht^{\Lambda} = Z(\A_\Tht).
  \end{equation}

\begin{proof} 
 For each $x = (x_1, x_2, \ldots, x_d) \in \ZZ^{d}$ we set  $U_x\coloneqq U_1^{x_{1}}U_2^{x_{2}}\ldots U_{d}^{x_{d}}.$
 Let $B\in \mathrm{GL}_d(\ZZ)$ be the matrix whose $i$\nb-th column is~$p_i$. 
 Since 
 \[
 U_{p_i}U_{p_j} = e^{-2\pi i\langle{p_i}\mid {\Tht p_j}\rangle}U_{p_j}U_{p_i}=
e^{-2\pi i\langle{B e_i}\mid {\Tht B e_j}\rangle}U_{p_j}U_{p_i} =
e^{-2\pi i(B^T\Tht B)_{i,j}}U_{p_j}U_{p_i}
 \]
  there is a homomorphism  $\pi_B\colon \A_{B^T\Tht B} \to \A_{\Tht}$
  that sends the canonical generator $\widetilde{U}_i\in \A_{B^T\Tht B}$ to $U_{p_i}\in \A_{\Tht}$ for $i=1,\ldots, d$. This is in fact an isomorphism with inverse given by the map $\pi_{B^{-1}}\colon \A_{\Tht}\to \A_{B^T\Tht B}$ that sends the generator $U_i$ to $\widetilde{U}_{B^{-1}e_i}$, $i=1,\ldots,d$ (see \cite[Remark~1.2]{C.Phillips}). The action $\gamma'\colon \TT^d\to\mathrm{Aut}(\A_\Tht)$ as in the statement of the lemma can now be defined by using the isomorphism $\pi_B$ to conjugate  the canonical gauge action of $\TT^d$ on 
 $\A_{B^T\Tht B}$ into an action on $\A_\Tht$. This proves the first assertion.

 In order to prove the first equality in \eqref{eqn:fpa=C*U=ZAD}, observe that
 $b\in H$ iff  $b=x_1a_1p_1+\cdots +x_ma_mp_m $ for some $x_1, \ldots , x_m \in \ZZ$. So the product $U_b = U_1^{b_1}\ldots U_d^{b_d}$ can be rearranged using the cocycle to yield
\[
U_b = \alpha U^{a_1x_1}_{p_1}\ldots U^{a_mx_m}_{p_m}
\]
 for some scalar $\alpha\in\TT$. 
 Hence $\Cst(U_b\mid b\in H)=\Cst(U^{a_i}_{p_i}\mid i\in\{1,\ldots,m\})$. 

 Next, recall that  the action $\gamma'$ is determined by 
\[
\gamma'_\lambda(U_{p_1}^{n_1} U_{p_2}^{n_2} \cdots U_{p_d}^{n_d}) = (\lambda_1^{n_1} \lambda_2^{n_2} \cdots \lambda_d^{n_d}) U_{p_1}^{n_1} U_{p_2}^{n_2} \cdots U_{p_d}^{n_d} \qquad (\lambda \in \TT^d). \]
So let $\gp$ be the restriction of $\gamma'$ to $\Lambda$.  If $E^{\Lambda}\colon \A_\Tht \to \A_\Tht^{\Lambda}$ denotes the conditional expectation obtained by averaging over $\Lambda$, then 
\[
 E^{\Lambda} (U_{p_1}^{n_1} U_{p_2}^{n_2} \cdots U_{p_d}^{n_d}) = \begin{cases}U_{p_1}^{n_1} U_{p_2}^{n_2} \cdots U_{p_d}^{n_d} & \text{if } \lambda_1^{n_1} \lambda_2^{n_2} \cdots \lambda_d^{n_d} =1 \text{ for all } \lambda \in \Lambda,  \\
 0& \text{otherwise.}
 \end{cases}
 \]
Clearly $\lambda_1^{n_1} \lambda_2^{n_2} \cdots \lambda_d^{n_d} =1$ for every $\lambda\in \Lambda$ iff $n_j = 0 \pmod {a_j}$ for $1\leq j \leq m$ and $n_j =0$ for $m+1\leq j \leq d$, that is, iff 
$U_{p_1}^{n_1} U_{p_2}^{n_2} \cdots U_{p_d}^{n_d} = U^{a_1x_1}_{p_1}\ldots U^{a_mx_m}_{p_m}$ for some $x_1, \ldots , x_m \in \ZZ$. This implies that 
$\Cst(U_b\mid b\in H) = \A_\Tht^{\Lambda}$ because $E^{\Lambda}$ is a contraction and products of the form $U_{p_1}^{n_1} U_{p_2}^{n_2} \cdots U_{p_d}^{n_d}$ have dense linear span in $\A_\Tht$.

%for which it is convenient to have the following realization of $\hat\Lambda$ derived from the usual duality pairing of $\TT^d $ and $\ZZ^d$.
 
 For the second equality in \eqref{eqn:fpa=C*U=ZAD},
take $b, c\in \ZZ^d$. Then $U_bU_c = e^{2\pi i\braket{ b}{\Thetad c}} U_cU_b$, and hence $U_b \in \mathrm{Z}(\A_{\Tht})$ if and only if~$b\in H$. This shows that $ \A_\Tht^{\Lambda} \subset \mathrm{Z}(\A_\Tht) $ because we have already shown that $\Cst(U_b\mid b\in H) = \A_\Tht^{\Lambda}$. 
In order to establish the reverse inclusion, 
we use the spectral subspaces of  $\gp$. Since $\Lambda$ is a closed subgroup of $\TT^d$, its dual group $\hat \Lambda$  
is the quotient of $\ZZ^d$ by the annihilator of $\Lambda$. If we use the duality pairing of $\TT^d$ with $\ZZ^d$ given by 
$\langle z, a\rangle = z^a = \prod_{j=1}^d z_j ^{a_j}$, then $\Lambda^\perp = \prod_{j=1}^m a_j \ZZ \times \{0\}^{d-m} $ and we have a natural identification $ \hat \Lambda\cong  (\ZZ/{a_1\ZZ})\times \cdots \times(\ZZ/{a_m\ZZ})\times\ZZ^{d-m}  $. Explicitly, denoting by
 $\bar b$ the image of $b\in \ZZ^d$ in $(\ZZ/{a_1\ZZ})\times \cdots \times(\ZZ/{a_m\ZZ})\times\ZZ^{d-m} $,  the corresponding duality pairing between $\Lambda$ and $\hat\Lambda$  is given by $\langle \lambda, \bar b\rangle\coloneqq\lambda^b = \lambda_1^{b_1} \lambda_2^{b_2} \cdots \lambda_d^{b_d}$ for $\lambda\in\Lambda$. The corresponding spectral projection $E^{\Lambda}_{\bar b}$ is given by 
\[
E^{\Lambda}_{\bar b}(x)\coloneqq \int_\Lambda \lambda^{-b} \gp_\lambda(x) \,d\lambda \qquad  (x\in\A_\Tht),
\]
and its range is the $\bar b$\nb-th spectral subspace
\[
E^{\Lambda}_{\bar b} (\A_\Tht)= \clsp \{U_{p_1}^{c_1}\ldots U_{p_m}^{c_m}U_{p_{m+1}}^{b_{m+1}}\ldots U_{p_d}^{b_d}\mid c_i-b_i\in a_i\ZZ\text{ for all } 1\leq i\leq m\}.
\]
Suppose $x\in \mathrm{Z}(\A_\Tht)$. Then $\gp_\lambda(x)\in \mathrm{Z}(\A_\Tht)$ 
for every $\lambda$, and hence also $E^{\Lambda}_{\bar b}(x) \in \mathrm{Z}(\A_\Tht)$. Thus
\[
E^{\Lambda}_{\bar b}(x)U_{p_i}=e^{2\pi i\braket{\sum_{\substack{j=1}}^db_jp_j}{ D p_i}}U_{p_i}E^{\Lambda}_{\bar b}(x)=e^{2\pi i\braket{\sum_{\substack{j=1}}^db_jp_j}{D p_i}}E^{\Lambda}_{\bar b}(x)U_{p_i}
\]
 for every $i = 1, 2, \ldots, d$. We deduce that $E^{\Lambda}_{\bar b}(x)=0$ when  $\bar b\neq 0$. This implies $x=E^{\Lambda} (x)$ (see, for example, \cite[Proposition 17.13]{Exel:Partial_dynamical}),
establishing the inclusion $\mathrm{Z}(\A_\Tht)  \subset E^{\Lambda}(\A_\Tht)= \A_\Tht^{\Lambda}$, and proving the second equality in \eqref{eqn:fpa=C*U=ZAD}. 
 \end{proof}
\end{lemma}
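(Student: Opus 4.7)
The plan is to prove the three assertions in sequence: existence of $\gamma'$, then each equality in \eqref{eqn:fpa=C*U=ZAD}. For the action, I would transport the canonical gauge action through a change of basis. Letting $B\in\mathrm{GL}_d(\ZZ)$ be the matrix with columns $p_1,\ldots,p_d$, a direct cocycle computation gives $U_{p_i}U_{p_j}=e^{-2\pi i(B^T\Tht B)_{i,j}}U_{p_j}U_{p_i}$, so the universal property of $\A_{B^T\Tht B}$ produces a $^*$-homomorphism $\pi_B\colon \A_{B^T\Tht B}\to \A_\Tht$ sending the canonical generators $\widetilde U_i$ to $U_{p_i}$. Using that $B\in \mathrm{GL}_d(\ZZ)$ one builds the inverse map, so $\pi_B$ is an isomorphism, and pulling the standard gauge action of $\TT^d$ on $\A_{B^T\Tht B}$ back through $\pi_B$ yields the required $\gamma'$ with $\gamma'_\lambda(U_{p_i})=\lambda_i U_{p_i}$.

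For the equality $\Cst(U_b\mid b\in H)=\A_\Tht^\Lambda$, I would first rewrite $U_b$ for $b\in H$ in the new basis: if $b=\sum_{i=1}^m n_i a_i p_i$, then repeated use of the twisted commutation relations gives $U_b=\alpha\,U_{p_1}^{a_1n_1}\cdots U_{p_m}^{a_m n_m}$ for some $\alpha\in\TT$, whence $\Cst(U_b\mid b\in H)=\Cst(U_{p_i}^{a_i}\mid 1\leq i\leq m)$. To identify this with $\A_\Tht^\Lambda$, I would note that the monomials $U_{p_1}^{n_1}\cdots U_{p_d}^{n_d}$ span a dense subspace of $\A_\Tht$ and are eigenvectors for $\gp$ with character $\lambda\mapsto \lambda_1^{n_1}\cdots\lambda_d^{n_d}$. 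Averaging over $\Lambda$ through the conditional expectation kills any such monomial whose character is nontrivial on $\Lambda$, i.e.\ all but those with $n_j\equiv 0\pmod{a_j}$ for $j\leq m$ and $n_j=0$ for $j>m$; the survivors lie precisely in $\Cst(U_{p_i}^{a_i}\mid 1\leq i\leq m)$, giving the equality.

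For $\A_\Tht^\Lambda=Z(\A_\Tht)$, the inclusion $\subset$ is immediate from the first equality and the identity $U_bU_c=e^{2\pi i\braket{b}{\Tht c}}U_cU_b$, which shows $U_b$ is central precisely when $b\in H$. For the reverse inclusion I would use the spectral subspaces of $\gp$ under the duality $\hat\Lambda\cong (\ZZ/a_1\ZZ)\times\cdots\times(\ZZ/a_m\ZZ)\times\ZZ^{d-m}$: since $Z(\A_\Tht)$ is $\gp$-invariant, each spectral projection $E^\Lambda_{\bar b}(x)$ of a central $x$ is itself central; on the other hand, elements of the $\bar b$-th spectral subspace satisfy $E^\Lambda_{\bar b}(x)U_{p_i}=e^{2\pi i\braket{\sum_j b_j p_j}{\Tht p_i}}U_{p_i}E^\Lambda_{\bar b}(x)$, so centrality forces the scalar to equal $1$ for every $i$, i.e.\ $\sum_j b_j p_j\in H$, contradicting $\bar b\neq 0$. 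Hence all nonzero spectral components of a central $x$ vanish, so $x=E^\Lambda(x)\in \A_\Tht^\Lambda$. The most delicate point will be precisely this last implication, where one must carefully translate between $\bar b\neq 0$ in $\hat\Lambda$ and noncentrality of $\sum_j b_j p_j$, keeping track of how the basis $\{p_1,\ldots,p_d\}$ adapts $H$ inside $\ZZ^d$ with the elementary divisors $a_1,\ldots,a_m$.
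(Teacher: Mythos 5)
Your proposal is correct and follows essentially the same route as the paper: transporting the gauge action through the isomorphism $\pi_B\colon \A_{B^T\Tht B}\to\A_\Tht$ determined by the basis matrix $B$, identifying $\Cst(U_b\mid b\in H)$ with the range of the conditional expectation $E^{\Lambda}$ by averaging monomials, and using the spectral subspaces of $\gamma^{\Lambda}$ together with the commutation scalars $e^{2\pi i\braket{\sum_j b_jp_j}{\Tht p_i}}$ to show that a central element has no nonzero spectral components with $\bar b\neq 0$. The delicate point you flag at the end — that $\bar b\neq 0$ in $\hat\Lambda$ is exactly equivalent to $\sum_j b_jp_j\notin H$ — is handled the same way in the paper via the adapted basis and elementary divisors.
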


\begin{remark} Notice that the matrix $D'=B^TDB$ from the proof of \lemref{lem:actionandfpa} has the form 
 \begin{equation*}
D'=
\left[
\begin{array}{c|c}
D'_{m\times m} & D'_{m\times (d-m)} \\
\hline
D'_{(d-m)\times m} & D'_{(d-m)\times (d-m)}
\end{array}
\right],
\end{equation*} where $ D'_{(d-m)\times (d-m)}$ is nondegenerate and the remaining entries of $D'$ are integers. Therefore, $\A_\Tht$ is isomorphic to a $d$\nb-dimensional noncommutative torus $\A_{D'}$ whose center $Z(\A_{D'})$ is generated by powers of  the first~$m$ canonical unitary generators of~$\A_{D'}$. The last $d-m$ canonical unitaries generate a simple $\Cst$\nb-subalgebra of~$\A_{D'}$.
\end{remark}

\begin{proposition}\label{pro:tracesfactor}
 Let $E^{\Lambda} \colon\A_\Tht \to \mathrm{Z}(\A_\Tht)$ be the canonical conditional expectation associated to the action $\gp$ from \lemref{lem:actionandfpa}. Then 
the map $\omega \mapsto \omega \circ E^{\Lambda} $ is an affine homeomorphism of the state space of $\mathrm{Z}(\A_\Tht)$
onto the space of tracial states of $\A_\Tht$.
\begin{proof}
Clearly $\omega \mapsto \omega \circ E^{\Lambda} $ is an affine bijection of states of the center to the set of states that factor through $E^{\Lambda}$. The inverse of this map is simply the restriction of states.
We first show every tracial state of $\A_\Tht$ factors through the conditional expectation~$E^{\Lambda}$.
Let~$\tau$ be a trace of~$\A_\Tht$ and let $b\in \ZZ^d$ be such that $b\not\in H$. Let $U_b=U_1^{b_1}U_2^{b_2}\cdots U_d^{b_d}$ and take~$c\in \ZZ^d$ so that $\braket{b}{\Tht c}\not\in\ZZ$. Then
\[
\tau(U_b)=\tau(U_bU_cU_c^*)=\tau(U_c^*U_bU_c)= e^{-2\pi i\braket{b}{\Tht c}}\tau(U_b).
\]
 So we must have $\tau(U_b)=0$ and hence~$\tau = (\tau\restriction_{\mathrm{Z}(\A_\Tht)}) \circ E^{\Lambda}$ since $\Cst(U_b\mid b\in H) = \mathrm{Z}(\A_\Tht)$ by \lemref{lem:actionandfpa}.
 %, i.e. $\tau$ factors through~$\mathrm{Z}(\A_\Tht)=\Cst(U_b\mid b\in H)$ as asserted. 
 
Next we show that  $\omega\circ E^{\Lambda}$ is indeed a trace of $\A_\Tht$ for every state $\omega$ of $\mathrm{Z}(\A_\Tht)$. 
Recall from \eqref{eqn:VintermsofU} that $\A_\Tht$ is generated by a universal projective unitary representation~$\bar{v}$ of~$\ZZ^d$ with cocycle ~$ \sigma_{\Tht}$. Since 
$\{\bar{v}_b\mid b\in \ZZ^d\}$ has dense linear span, it suffices to show that the commutator of any two of these  elements
lies in the kernel of $E^{\Lambda}$. Let  $b,c \in \ZZ^d$. Then
\[
E^{\Lambda} (\bar{v}_b\bar{v}_c - \bar{v}_c\bar{v}_b) = (\sigma_D(b,c) - \sigma_D(c,b))E^{\Lambda}(\bar{v}_{b+c} ) 
\]
obviously  vanishes when $b+c \notin H$ because then $E^{\Lambda}(\bar{v}_{b+c} ) = 0$.
Assume now that $b+c\in H$. Then
\[
1=e^{-2\pi i\braket{ b+c}{Dc}}=\sigma_D (b+c, c)^2 = \sigma_D (b+c, c) \overline{\sigma_D (c, b+c)}.
\]
Hence $\sigma_D (b+c, c)=\sigma_D (c, b+c)$, which implies $\sigma_D(b,c)=\sigma_D(c,b)$. This shows that $E^{\Lambda} (\bar{v}_b\bar{v}_c - \bar{v}_c\bar{v}_b)$ also vanishes when $b+c \notin H$, and thus $\omega\circ E^{\Lambda}$ is a trace of $\A_\Tht$.
\end{proof}
\end{proposition}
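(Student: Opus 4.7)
The plan is to produce an explicit two-sided inverse: show that every tracial state $\tau$ of $\A_D$ already factors through $E^{\Lambda}$ (so that $\tau \mapsto \tau|_{\mathrm{Z}(\A_D)}$ inverts the map $\omega\mapsto\omega\circ E^{\Lambda}$), and conversely show that every pre-composition $\omega\circ E^{\Lambda}$ is actually a trace. Affine weak* continuity of $\omega\mapsto\omega\circ E^{\Lambda}$ and of the restriction map are automatic, so once the bijection is established, the homeomorphism claim follows from compactness and Hausdorffness of the state and trace simplices.

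For the first direction, I would rely on the description $\mathrm{Z}(\A_D)=\Cst(U_b\mid b\in H)$ from \lemref{lem:actionandfpa}, together with the fact that $E^{\Lambda}(U_b)$ vanishes exactly when $b\notin H$. So it suffices to check that any trace $\tau$ annihilates every $U_b$ with $b\notin H$: by definition of $H$ there is some $c\in\ZZ^d$ with $\braket{b}{Dc}\notin\ZZ$, and the commutation relation $U_c^*U_bU_c=e^{-2\pi i\braket{b}{Dc}}U_b$ combined with $\tau(U_c^*U_bU_c)=\tau(U_b)$ forces $\tau(U_b)=0$. Hence $\tau$ agrees with $(\tau|_{\mathrm{Z}(\A_D)})\circ E^{\Lambda}$ on a spanning set and therefore everywhere.

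For the second direction, I would use the projective unitary representation $b\mapsto\bar v_b$ of $\ZZ^d$ with cocycle $\sigma_D$, so that $\A_D=\clsp\{\bar v_b\mid b\in\ZZ^d\}$. It is enough to show that $E^{\Lambda}$ kills every commutator $\bar v_b\bar v_c-\bar v_c\bar v_b=\bigl(\sigma_D(b,c)-\sigma_D(c,b)\bigr)\bar v_{b+c}$. If $b+c\notin H$ then $E^{\Lambda}(\bar v_{b+c})=0$ and there is nothing to do. If $b+c\in H$, the crucial point is the cocycle identity $\sigma_D(b,c)=\sigma_D(c,b)$: indeed, $b+c\in H$ gives $\braket{b+c}{Dc}\in\ZZ$, and antisymmetry of $D$ (so $\braket{c}{Dc}=0$) yields $\braket{b}{Dc}\in\ZZ$, whence $\sigma_D(b,c)\overline{\sigma_D(c,b)}=e^{-\pi i(\braket{b}{Dc}-\braket{c}{Db})}=e^{-2\pi i\braket{b}{Dc}}=1$.

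The main obstacle I anticipate is this last algebraic check: one has to be careful to exploit both the antisymmetry of $D$ (to turn $\braket{c}{Db}$ into $-\braket{b}{Dc}$) and the defining integrality property of $H$ at the point $b+c$ rather than $b$ itself. Once this symmetry of $\sigma_D$ on the ``diagonal'' $H$ is in hand, the rest assembles: $E^{\Lambda}$ annihilates commutators of generators by linearity and continuity, so $\omega\circ E^{\Lambda}$ is tracial, completing the bijection and hence the homeomorphism.
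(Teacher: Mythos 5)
Your proposal is correct and follows essentially the same route as the paper: both directions (tracial states kill $U_b$ for $b\notin H$ via the commutation trick, and $E^{\Lambda}$ annihilates commutators of the $\bar v_b$ because $\sigma_D(b,c)=\sigma_D(c,b)$ when $b+c\in H$) are exactly the paper's argument, with your cocycle computation being a minor rephrasing of theirs. No gaps.
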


We can now show that the center  of $\A_{\Tht}$ is isomorphic  to a torus of dimension~$m=\operatorname{rank} H$.
This characterisation  can  also be derived from the proof of \cite[Lemma 2.3]{MR731772}.

\begin{proposition}\label{prp:center-iso} 
Let $\Tht\in M_d(\RR)$ be an antisymmetric matrix and let $\A_\Tht$
be the associated noncommutative torus, with canonical unitary generators $U_1, \cdots, U_d$. Let $m=\operatorname{rank}H$ and let $\{p_1,\ldots,p_d\}$ be a basis for $\ZZ^d$ as in \lemref{lem:actionandfpa}. Then there is an isomorphism 
$ \mathrm{C}(\TT^m) \cong \mathrm{Z}(\A_\Tht)$ that sends $z_j$ to $U_{p_j}^{a_j} $ for $j=1,\ldots,m$, where
$z_j\colon \TT^m \to \TT\hookrightarrow\CC$  is the projection onto the $j$\nb-th coordinate.
\begin{proof} Since $\{U_{p_i}^{a_i} \, \mid i=1,2,\ldots,m\}$ is a commuting family of~$m$ unitaries generating $\mathrm{Z}(\A_\Tht)$, there is a canonical surjective homomorphism $\pi_H\colon\mathrm{C}(\TT^m) \to\mathrm{Z}(\A_\Tht)$ such that $\pi_H(z_j) = U_{p_j}^{a_j}$ for $j=1,\ldots, m$.

Now let $a =(a_1, a_2, \ldots, a_m) \in \ZZ^m$
 and consider the action of $\TT^m$ on $\mathrm{C}(\TT^m)$ given by the composite of the translation action with the group homomorphism $\nu\mapsto \nu^a=\prod_{\substack{j=1}}^m\nu_j^{a_j}$. Let $\gamma'$ be the action of $\TT^d$ on $\A_\Tht$ as in \lemref{lem:actionandfpa}. Identify $\TT^m$ with the closed subgroup $\TT^m\times 1_{d-m}$ of $\TT^d$ and notice that for all $\nu,\lambda\in \TT^m$, one has $z_j(\nu^a\lambda) = \nu^{a_j}_j \lambda_j = \nu^{a_j}_jz_j(\lambda)$ and also $\gamma'_{\nu}( U_{p_j}^{a_j}) = \nu^{a_j}_j U_{p_j}^{a_j}$. Thus the homomorphism $\pi_H$ is $\TT^m$\nb-equivariant with respect to the action on $\mathrm{C}(\TT^m)$ described above and the action on $\mathrm{Z}(\A_\Tht)$ obtained by restricting~$\gamma'$ to~$\mathrm{Z}(\A_\Tht)$ and then to~$\TT^m$.
 By \cite[Proposition~2.9]{Exel:Circle_actions}, we deduce that $\pi_H$ is injective, and thus an isomorphism.
  \end{proof}
\end{proposition}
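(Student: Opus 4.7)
The goal is to build a well-defined surjection $\pi_H\colon\mathrm{C}(\TT^m)\to \mathrm{Z}(\A_\Tht)$ sending $z_j\mapsto U_{p_j}^{a_j}$ and then use an equivariance argument to force injectivity. For the surjection, I first note that the previous lemma already gives $\mathrm{Z}(\A_\Tht)=\Cst(U_b\mid b\in H)$. Since $H$ has basis $\{a_1p_1,\ldots,a_mp_m\}$, any $b\in H$ can be written $b=\sum_{j=1}^m x_j a_j p_j$ and the cocycle formula \eqref{eqn:VintermsofU} rearranges $U_b$ into a scalar multiple of $U_{p_1}^{a_1 x_1}\cdots U_{p_m}^{a_m x_m}$; hence the $m$ elements $U_{p_j}^{a_j}$ generate $\mathrm{Z}(\A_\Tht)$. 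Being central they commute, so the universal property of $\mathrm{C}(\TT^m)$ yields a surjective \Star homomorphism $\pi_H$ with $\pi_H(z_j)=U_{p_j}^{a_j}$.

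For injectivity I will put compatible torus actions on both sides. On the domain, take the action $\alpha$ of $\TT^m$ on $\mathrm{C}(\TT^m)$ obtained as translation precomposed with $\nu\mapsto \nu^a=(\nu_1^{a_1},\ldots,\nu_m^{a_m})$; then $\alpha_\nu(z_j)=\nu_j^{a_j}z_j$. On the codomain, take the restriction of the action $\gamma'$ from \lemref{lem:actionandfpa} to the subgroup $\TT^m\times 1_{d-m}\subset\TT^d$, which satisfies $\gamma'_\nu(U_{p_j}^{a_j})=\nu_j^{a_j}U_{p_j}^{a_j}$ for $j\le m$. The two actions agree on generators under $\pi_H$, so $\pi_H$ is $\TT^m$\nb-equivariant.

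To conclude I will invoke \cite[Proposition~2.9]{Exel:Circle_actions}. It suffices to check that $\pi_H$ is injective on the fixed-point algebra for $\alpha$ and that the grading is saturated (or semi-saturated) in the relevant sense. The fixed-point algebra of $\alpha$ on $\mathrm{C}(\TT^m)$ is $\CC$ (each $\nu\mapsto\nu_j^{a_j}$ is surjective on $\TT$), and $\pi_H$ sends $1\mapsto 1$, hence is injective there. The spectral subspaces for $\alpha$ are indexed by $a\ZZ^m\subset\ZZ^m$ and are one-dimensional, spanned by monomials $z^k=\prod z_j^{k_j}$; their images $\prod U_{p_j}^{a_j k_j}$ are nonzero unitaries spanning the corresponding spectral subspaces on the right. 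Checking injectivity on each spectral subspace (and saturatedness following from the fact that the spectral subspaces are principal $\mathrm{C}(\TT^m)^\alpha$\nb-modules with unitary generators) will then give injectivity of $\pi_H$ from Exel's result.

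The main obstacle is verifying that the hypotheses of \cite[Proposition~2.9]{Exel:Circle_actions} genuinely apply here, in particular pinning down the correct equivariant setup (both sides carrying an honest $\TT^m$\nb-action whose fixed-point algebras are $\CC$ and whose spectral subspaces match up under $\pi_H$). Once the equivariance is in place and one confirms that $\pi_H$ restricts to an isomorphism of fixed-point algebras, the conclusion of Exel's proposition is immediate; the computational content of the proof really lies in identifying the spectral subspaces on both sides and matching them term by term.
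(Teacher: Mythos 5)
Your proposal is correct and follows essentially the same route as the paper: build the surjection $\pi_H$ from the commuting central unitaries $U_{p_j}^{a_j}$, equip $\mathrm{C}(\TT^m)$ with translation precomposed with $\nu\mapsto\nu^a$ and $\mathrm{Z}(\A_\Tht)$ with the restriction of $\gamma'$, check equivariance on generators, and conclude injectivity from \cite[Proposition~2.9]{Exel:Circle_actions}. Your extra worry about saturation is unnecessary --- equivariance plus injectivity on the fixed-point algebra (which you correctly identify as $\CC$) already suffices, via the faithful conditional expectations --- but this does not affect the validity of the argument.
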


\begin{remark}We believe that the description of the center $\mathrm{Z}(\A_\Tht)$  in \proref{prp:center-iso} and the characterisation of  tracial states of $\A_\Tht$ in  \proref{pro:tracesfactor} are known to experts.  Since we were not able to find an explicit source, we have included precise statements and detailed proofs for completeness and ease of reference.
\end{remark}

\section{Main results}\label{sec:mainresults}
When we combine the results of the preceding two sections we obtain our main theorem, which is a parametrisation of \kmsb states
in terms of states on the center of the noncommutative torus $\A_{\Thetad}$.   By \proref{prp:center-iso},  this center is isomorphic to $\mathrm{C}(\TT^m)$, where $m$ is the degeneracy index of~$\Thetad$. Hence the simplex of \kmsb states is affinely weak* homeomorphic to the space of probability measures on the classical torus $\TT^m$.
\begin{thm} \label{thm:main}
Suppose that $r=(r_1,\ldots,r_k,0, \ldots, 0) \in \RR^{n}$ is a vector with strictly positive first~$k$ coordinates and that
 \[
\Theta=
\left[
\begin{array}{c|c}
\Theta_k &\Lambda \\
\hline
-\Lambda^T &  
\Thetad
\end{array}
\right]
\] 
is an antisymmetric $n\times n$ real matrix with diagonal blocks $\Theta_k$ and $\Thetad$ of sizes $k\times k$ and $d\times d$, respectively. Let $m$ be the degeneracy index of $\Thetad$.
 Then there is an affine weak* homeomorphism of the space $M_1(\TT^m)$ of probability measures on $\TT^m$ onto the space of  \kmsb states of $(\Tt_r(\NN^n,\sigma_\Theta),\alpha^r)$. 
Specifically, if $\{p_1,\ldots,p_m\}$ is a basis for $\ZZ^d$ such that  $\{a_1p_1,\ldots,a_mp_m\}$ is a basis for~$
 H=\{x\in\ZZ^d\mid \braket{x}{\Thetad y}\in\ZZ\text{ for all } y\in\ZZ^d\}
$ as in \lemref{lem:actionandfpa}, then the  affine homeomorphism can be chosen so that the 
extremal \kmsb state $\varphi_{\beta,z}$ associated to the unit point mass at $z\in \TT^m$ is given by
\[
\varphi_{\beta,z} (L_p L_x L_y^* L_q^*) =  \delta_{p,q} \,  {[x-y \in H]}\,  \lambda_{x-y}\, z^c 
\prod_{j=1}^k\frac{e^{-\beta r_j p_j}(1 - e^{-\beta r_j })}{1 - e^{-\beta r_j +2\pi i \braket{\Theta (x-y)}{e_j}}}
\]
where $c=(c_1,\ldots,c_m)$ is the vector of coefficients of $x-y$ with respect to the basis $\{a_1p_1,\ldots,a_mp_m\}$ of~$H$ and $\lambda_{x-y}\in\{-1,1\}$ is such that $\pi(L_xL_y^*)=\lambda_{x-y}U^{c_1}_{a_1p_1}\ldots U_{a_mp_m}^{c_m}$ in $\A_{\Thetad}$.
  \begin{proof} By \lemref{tracesfromcentre}, the homomorphism $\pi\colon \Tt_r(\NN^d,\sigma_{\Thetad})\to \A_{\Thetad}$ that sends the isometry~$L_{e_j}$ to the unitary~$U_j$ yields a weak* homeomorphism between tracial state spaces via $\tau\mapsto\tau\circ\pi$. By  \proref{pro:tracesfactor}, the map that sends a state $\omega$ of the center $\mathrm{Z}(\A_{\Thetad})$ to the tracial state $\omega\circ E^{\Lambda}$ of $\A_{\Thetad}$ is an affine weak* homeomorphism. Combining these maps we conclude that given an isomorphism $\phi\colon\mathrm{Z}(\A_{\Thetad})\xrightarrow{\cong}\mathrm{C}(\TT^m)$, each probability measure~$\mu$ on~$\TT^m$ gives rise to a tracial state $\omega_\mu \circ E^{\Lambda}\circ\pi$ of  $\Tt_r(\NN^d,\sigma_{\Thetad})$, where $$\omega_\mu(b)=\int_{\substack{\TT^m}}\phi(b)\,d\mu\qquad (b\in \mathrm{Z}(\A_{\Thetad})).$$ Such an isomorphism $\phi\colon\mathrm{Z}(\A_{\Thetad})\xrightarrow{\cong}\mathrm{C}(\TT^m)$ always exists by \proref{prp:center-iso}, and thus it follows from~\proref{pro:KMSfromtraces} that the space of probability measures on $\TT^m$ and the \kmsb simplex of $(\Tt_r(\NN^n,\sigma_\Theta),\alpha^r)$ are affinely weak* homeomorphic.

Suppose that the isomorphism $\phi\colon\mathrm{Z}(\A_{\Thetad})\xrightarrow{\cong}\mathrm{C}(\TT^m)$ in question is the one coming from the basis $\{a_1p_1,\ldots,a_mp_m\}$ for~$H$  built in \proref{prp:center-iso}. When we let $\mu$ be the unit point mass at $z\in \TT^m$ and let $\omega_z \circ E^{\Lambda}\circ\pi$ play the role of~$\tau$ in \proref{pro:KMSfromtraces}, we see from \eqref{eqn:Tbeta2} that the weak* homeomorphism described above gives us precisely a extremal \kmsb state $\varphi_{\beta,z}$ given by the formula in the statement of the theorem. That the scalar $\lambda_{x-y}\in\TT$ such that $\pi(L_xL_y^*)=\lambda_{x-y}U_{a_1p_1}^{c_1}U_{a_2p_2}^{c_2}\ldots U_{a_mp_m}^{c_m}$ in $\A_{\Thetad}$ is either $1$ or $-1$ for $x-y\in H$ follows because  $\braket{x}{\Thetad y}=\braket{x-y}{\Thetad y}\in\ZZ$, and as in \eqref{eqn:VintermsofU} we have $$\lambda_{x-y}= e^{\pi i\braket{x}{\Thetad y}}e^{-\pi i \braket{(x-y)-c_1a_1p_1}{\Thetad c_1a_1p_1 }}\ldots e^{-\pi i \braket{c_ma_mp_m}{\Thetad c_{m-1}a_{m-1}p_{m-1}}}.\qedhere$$

\end{proof}
\end{thm}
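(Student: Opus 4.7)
The plan is to compose the four affine weak\(^*\) homeomorphisms established in the preceding results. Via the isomorphism $\phi\colon\mathrm{Z}(\A_{\Thetad})\xrightarrow{\cong}\mathrm{C}(\TT^m)$ of \proref{prp:center-iso} that sends $U_{p_j}^{a_j}$ to the $j$\nb-th coordinate function $z_j$, the Riesz representation theorem identifies the state space of $\mathrm{Z}(\A_{\Thetad})$ with $M_1(\TT^m)$. Composing this with the affine weak\(^*\) homeomorphism $\omega\mapsto\omega\circ E^{\Lambda}$ of \proref{pro:tracesfactor}, then with $\tau\mapsto\tau\circ\pi$ from \lemref{tracesfromcentre}, and finally with $\tau\mapsto T_\beta(\tau\circ\rho_Q^{-1})$ from \proref{pro:KMSfromtraces}, yields the desired affine weak\(^*\) homeomorphism between $M_1(\TT^m)$ and the \kmsb simplex of $(\Tt_r(\NN^n,\sigma_\Theta),\alpha^r)$. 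Since affine homeomorphisms preserve extreme points and the extreme points of $M_1(\TT^m)$ are the point masses, the image $\varphi_{\beta,z}$ of $\delta_z$ is automatically extremal.

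To obtain the explicit formula, I would trace $\delta_z$ through the composition. The corresponding state $\omega_z$ on $\mathrm{Z}(\A_{\Thetad})$ satisfies $\omega_z(U_{a_1p_1}^{c_1}\cdots U_{a_mp_m}^{c_m})=z^c$ by construction of $\phi$. Set $\tau_z\coloneqq\omega_z\circ E^{\Lambda}\circ\pi$. For $x,y\in 0_k\times\NN^d$ one has $\pi(L_x)=\bar v_x$, and since $\sigma_{\Thetad}(y,-y)=e^{\pi i\braket{y}{\Thetad y}}=1$ by antisymmetry of $\Thetad$, the cocycle identity gives $\pi(L_xL_y^*)=e^{\pi i\braket{x}{\Thetad y}}\bar v_{x-y}$. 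By the computation in the proof of \proref{pro:tracesfactor}, $E^{\Lambda}(\bar v_{x-y})=\bar v_{x-y}$ if $x-y\in H$ and vanishes otherwise. When $x-y\in H$, writing $x-y=c_1a_1p_1+\cdots+c_ma_mp_m$ and repeatedly applying \eqref{eqn:VintermsofU} together with the cocycle relation to reorganise the product into one involving the basis $\{a_1p_1,\ldots,a_mp_m\}$ of $H$, I obtain $\bar v_{x-y}=\mu\,U_{a_1p_1}^{c_1}\cdots U_{a_mp_m}^{c_m}$ for an explicit scalar $\mu\in\TT$. Setting $\lambda_{x-y}\coloneqq e^{\pi i\braket{x}{\Thetad y}}\mu$, this yields $\tau_z(L_xL_y^*)=[x-y\in H]\,\lambda_{x-y}\,z^c$, and substitution into the formula \eqref{eqn:TbetaEuler} of \proref{pro:KMSfromtraces} produces exactly the claimed expression for $\varphi_{\beta,z}(L_pL_xL_y^*L_q^*)$.

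The main technical point requiring verification is that $\lambda_{x-y}\in\{-1,1\}$ whenever $x-y\in H$. Unravelling the rearrangement described above, every phase factor contributing to $\mu$ has the form $e^{-\pi i\braket{u}{\Thetad(c_ja_jp_j)}}$ for some $u\in\ZZ^d$; since $a_jp_j\in H$, the defining property $\braket{H}{\Thetad\ZZ^d}\subseteq\ZZ$ forces $\braket{u}{\Thetad(a_jp_j)}\in\ZZ$, so each such factor lies in $\{\pm 1\}$. The remaining leading factor $e^{\pi i\braket{x}{\Thetad y}}$ also lies in $\{\pm 1\}$ because $\braket{y}{\Thetad y}=0$ and $x-y\in H$ together give $\braket{x}{\Thetad y}=\braket{x-y}{\Thetad y}\in\ZZ$. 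Consequently $\lambda_{x-y}$ is a product of factors in $\{\pm 1\}$, completing the identification of the extremal \kmsb state associated to the point mass at $z$.
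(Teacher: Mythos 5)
Your proposal is correct and follows essentially the same route as the paper: composing the affine weak* homeomorphisms from Proposition~\ref{prp:center-iso}, Proposition~\ref{pro:tracesfactor}, Lemma~\ref{tracesfromcentre} and Proposition~\ref{pro:KMSfromtraces}, then tracing the point mass $\delta_z$ through the composition and checking that each phase factor in $\lambda_{x-y}$ is $\pm 1$ because the relevant inner products $\braket{\cdot}{\Thetad(a_jp_j)}$ are integers. Your verification that $\pi(L_xL_y^*)=e^{\pi i\braket{x}{\Thetad y}}\bar v_{x-y}$ and the subsequent rearrangement match the paper's explicit formula for $\lambda_{x-y}$ exactly.
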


As  consequences of \thmref{thm:main} for particular choices of~$\Theta$ we get the following two corollaries.
The first one addresses the question of when these systems have unique equilibrium at each $\beta$.

\begin{corollary}\label{cor:nondegenthetad}
If $r>0$ or if $\Thetad$ is nondegenerate, then the system $(\Tt_r(\NN^n,\sigma_\Theta),\alpha^r)$ has a unique \kmsb state for each $\beta>0$.
\end{corollary}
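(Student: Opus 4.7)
The plan is to derive the corollary directly from \thmref{thm:main}, which establishes an affine weak* homeomorphism between the simplex of \kmsb states and the space $M_1(\TT^m)$ of probability measures on $\TT^m$, where $m$ is the degeneracy index of $\Thetad$. Uniqueness of the \kmsb state is therefore equivalent to $m=0$, because $\TT^0$ is a one-point space and hence $M_1(\TT^0)$ is a singleton. So the task reduces to verifying $m=0$ in each of the two cases.

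First I would handle the case $r>0$. Here all coordinates of $r$ are strictly positive, so $k=n$ and $d=0$. In this case the ``block'' $\Thetad$ is the empty $0\times 0$ matrix, and the group $H=\{x\in\ZZ^0\mid \braket{x}{\Thetad y}\in\ZZ \text{ for all }y\in\ZZ^0\}$ is the trivial group $\{0\}$, whose rank is $m=0$. Alternatively, one can bypass the appeal to $m$ by noting directly from \proref{pro:KMSfromtraces} that when $d=0$ the corner $Q\Tt_r(\NN^n,\sigma_\Theta)Q$ is one-dimensional, so its tracial state space is a single point.

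Next I would handle the case when $\Thetad$ is nondegenerate. By the definition of nondegeneracy of an antisymmetric real matrix (as recalled immediately after \lemref{tracesfromcentre}), we have $H=\{x\in\ZZ^d\mid \braket{x}{\Thetad y}\in\ZZ \text{ for all }y\in\ZZ^d\}=\{0\}$, so $m=\operatorname{rank} H=0$ and $\TT^m$ is a single point. Applying \thmref{thm:main} in both cases, the \kmsb simplex is a singleton for each $\beta>0$, as claimed.

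There is no real obstacle here since both cases reduce to a transparent verification that the degeneracy index vanishes; the substance of the corollary is carried entirely by \thmref{thm:main}.
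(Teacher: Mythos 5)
Your proposal is correct and follows the same route the paper intends: the corollary is stated as an immediate consequence of \thmref{thm:main}, and in both cases the degeneracy index $m$ of $\Thetad$ vanishes ($d=0$ when $r>0$, and $H=\{0\}$ by definition of nondegeneracy), so $M_1(\TT^0)$ is a singleton. Your observation that the $r>0$ case can alternatively be seen directly from the one-dimensionality of the corner is a nice sanity check but not needed.
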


At the other extreme, the second one gives conditions under which the phase transition is maximal, recovering the phase transition result for the building blocks from \cite{AaHRS}.
\begin{corollary}
If $\Thetad \in M_d(\ZZ)$, then $m = d$ and the simplex of  \kmsb states is parametrised by the probability measures on the $d$\nb-torus.
\end{corollary}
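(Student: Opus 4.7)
My plan is to simply compute the degeneracy index $m$ of $\Thetad$ under the integrality hypothesis and then invoke \thmref{thm:main}. Recall that
\[
H=\{x\in\ZZ^d\mid \braket{x}{\Thetad y}\in\ZZ\text{ for all } y\in\ZZ^d\}.
\]
If every entry of $\Thetad$ is an integer, then for any $x,y\in\ZZ^d$ the scalar $\braket{x}{\Thetad y}$ is an integer combination of the entries of $\Thetad$ and hence lies in $\ZZ$. Consequently $H=\ZZ^d$, so that $m = \rank H = d$.

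With $m=d$ in hand, the conclusion is immediate from \thmref{thm:main}, which sets up an affine weak\(^*\) homeomorphism of $M_1(\TT^m)$ onto the simplex of \kmsb states of $(\Tt_r(\NN^n,\sigma_\Theta),\alpha^r)$. Substituting $m=d$ gives the parametrisation by probability measures on $\TT^d$. There is no real obstacle here; the only thing to notice is that the hypothesis $\Thetad\in M_d(\ZZ)$ is precisely what forces the bicharacter $\sigma_{\Thetad}$ to be trivial on $\ZZ^d\times\ZZ^d$ up to signs, so that the center of $\A_{\Thetad}$ is all of $\A_{\Thetad}$ and the invariant $H$ attains its maximal rank.
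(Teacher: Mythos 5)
Your proposal is correct and is essentially the argument the paper intends: the integrality of $\Thetad$ gives $H=\ZZ^d$, hence $m=\rank H=d$, and \thmref{thm:main} then yields the parametrisation by $M_1(\TT^d)$. Your closing observation that the center of $\A_{\Thetad}$ is all of $\A_{\Thetad}$ is also accurate, since integrality makes the generators commute.
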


%%%%%%%%%%%%%%%%%%%%%%%%%%%%%%%%%%%%%%%%%%%%%%%%
%%%%%%%%%%%%%%%%%%%%%%%%%%%%%%%%%%%%%%%%%%%%%%%%

\section{Equilibrium at $\beta=\infty$ and $\beta =0$}\label{sec:kmsiandkmso}
In this section we study equilibrium at the extremal inverse temperature values $\infty$ and $0$ for the system $(\Tt_r(\NN^n,\sigma_\Theta), \alpha^r)$.
As before, $\Theta$ is an antisymmetric $n\times n$ real matrix  and   $r=(r_1, \ldots, r_k, 0_d)$ is a vector in $[0,\infty)^n$ with strictly positive first $k$ coordinates.

\subsection{Ground states and \kmsi states}
A state $\varphi $ of $\Tt_r(\NN^n,\sigma_\Theta)$ is a \textit{ground state} for $\alpha$ if and only if the function $z\mapsto \varphi( A\alpha_z(B))$ is bounded on the upper half plane for all analytic elements $A, B\in \Tt_r(\NN^n,\sigma_\Theta)$. See, for example,  \cite[Proposition 5.3.19]{bra-rob} and \cite[Proposition 8.12.3]{ped}. Although initially ground states were also called,
 indistinctly, $\infty$\nb-KMS states, relatively recent results on phase transitions at $\beta=\infty$ related to Bost--Connes systems have required that a distinction be made between generic ground states  as defined above and those that can be obtained as limits of \kmsb states for $\beta \to \infty$. See \cite[Definition 3.7]{CM2008} for the definitions and \cite[Theorem 7.1]{LR2010} for an example in which the two concepts differ. As a result, it has become customary to reserve the terminology "\kmsi states"  for the aforementioned limits.
 
 We wish to compute first the ground states and the \kmsi states of our systems, and we begin with the following characterisation of ground states of $(\Tt_r(\NN^n,\sigma_\Theta), \alpha)$.
\begin{proposition}\label{prop:KMS-characterisationgr} 
Let $n = k+d$ with $k,d\in\NN$ and let $r=(r_1,\ldots, r_k, 0_d)$ with $r_j >0$ for $j \leq k$. Let $\alpha$ be the dynamics determined by~$r$ and suppose that $\varphi$ is a state of $\Tt_r(\NN^n,\sigma_{\Theta})$.
Then $\varphi$ is a ground state of $(\Tt_r(\NN^n,\sigma_\Theta),\alpha)$ if and only if for all~$p,q\in\NN^k\times 0_d$ and $x,y \in 0_k \times \NN^d$, one has
\begin{equation}\label{equ:charac-ground}
\varphi(L_pL_xL_y^*L_q^*)= 0 \text{ unless } p=q=0.
\end{equation}
\begin{proof}  Suppose first that $\varphi$ is a ground state and let $p,q\in\NN^k\times 0_d$ and $x,y \in 0_k \times \NN^d$. Then the function  
\[
z \mapsto \varphi(L_pL_xL_y^*\alpha_z(L_q^*)) = e^{-i \braket{q}{r}z} \varphi(L_pL_xL_y^*L_q^*) 
\]
is bounded on the upper half plane, and so either $q=0$ or $\varphi(L_pL_xL_y^*L_q^*)= 0$. Taking adjoints shows that either $p=0$ or  $\varphi(L_pL_xL_y^*L_q^*) = 0$. This proves that ground states of $(\Tt_r(\NN^n,\sigma_\Theta),\alpha)$ satisfy \eqref{equ:charac-ground}.

Suppose now that $\varphi$ is a state such that \eqref{equ:charac-ground} holds for all  $p,q\in\NN^k\times 0_d$ and $x,y \in 0_k \times \NN^d$. In order to conclude that $\varphi$ is a ground state, it suffices to show that the function $\varphi(X\sigma_z(Y))$ is bounded on the upper half plane
for every $X= L_pL_xL_y^*L_q^*$ and $Y= L_aL_sL_t^*L_b^*$ with $p,q,a,b\in\NN^k\times 0_d$ and $x,y,s,t\in 0_k\times\NN^d$, because the elements of this type are analytic, $\alpha$\nb-invariant, and have a dense linear span in $\Tt_r(\NN^n,\sigma_\Theta)$. Using \defref{dfn-covariance}(3)  we see that $XY=\lambda L_{p-q+q\vee a}L_xL_y^*L_sL_t^*L_{b-a+q\vee a}^*$ for some $\lambda \in\TT$, so \eqref{equ:charac-ground} yields
\[
\varphi(X \alpha_z(Y) ) = e^{i \braket{a-b}{r} z} \varphi(XY )=\begin{cases}e^{i \braket{q\vee a}{r} z}\lambda \varphi(L_xL_y^*L_sL_t^*)&\text{if }q-p=a-b=q\vee a,\\
0  &\text{otherwise}.
\end{cases}
\]
Since $\braket{q\vee a}{r}\geq 0$ the function  $z\mapsto \varphi(X \alpha_z(Y) )$ is bounded on the upper half plane.This shows that $\varphi$ is a ground state for~$(\Tt_r(\NN^n,\sigma_\Theta),\alpha)$ and completes the proof of the proposition.
 \end{proof}
\end{proposition}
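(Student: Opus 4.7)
The plan is to handle the two implications separately, using the spanning set from \proref{condexp} as a reservoir of $\alpha$-analytic elements.

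For the forward direction, assume $\varphi$ is a ground state and fix $p, q \in \NN^k\times 0_d$ and $x, y \in 0_k\times\NN^d$. Taking the analytic elements $A := L_pL_xL_y^*$ and $B := L_q^*$, we have $\varphi(A\alpha_z(B)) = e^{-i\braket{q}{r}z}\varphi(L_pL_xL_y^*L_q^*)$. For $z$ in the upper half plane, $|e^{-i\braket{q}{r}z}| = e^{\braket{q}{r}\operatorname{Im} z}$, which is unbounded as $\operatorname{Im} z \to \infty$ whenever $\braket{q}{r} > 0$. Since $\braket{q}{r} = 0$ is equivalent to $q = 0$ (because $r_j > 0$ for $j \leq k$ and $q\in\NN^k\times 0_d$), the boundedness hypothesis forces either $q = 0$ or $\varphi(L_pL_xL_y^*L_q^*) = 0$. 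The analogous statement for $p$ follows by applying the same argument to the adjoint element $(L_pL_xL_y^*L_q^*)^* = L_qL_yL_x^*L_p^*$ and using $\overline{\varphi(\cdot)} = \varphi((\cdot)^*)$.

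For the converse, assume \eqref{equ:charac-ground}. Since the products $L_pL_xL_y^*L_q^*$ are analytic, have $\alpha$-invariant dense linear span, and are closed under the $\alpha$-action, it suffices to establish boundedness of $z\mapsto \varphi(X\alpha_z(Y))$ on the upper half plane for $X = L_pL_xL_y^*L_q^*$ and $Y = L_aL_sL_t^*L_b^*$. Writing $\alpha_z(Y) = e^{i\braket{a-b}{r}z}Y$, the problem reduces to computing $\varphi(XY)$ and controlling the exponential factor when it is nonzero. The central step is to rewrite $XY = L_pL_xL_y^*L_q^*L_aL_sL_t^*L_b^*$ in canonical form $\mu L_P L_{X'}L_{Y'}^*L_Q^*$ with $P, Q \in \NN^k\times 0_d$, $X', Y' \in 0_k\times\NN^d$, and $\mu \in \TT$. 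The crucial ingredient is \defref{dfn-covariance}(3) applied to $L_q^*L_a$ (both indices in $\NN^k\times 0_d$), which produces a scalar multiple of $L_{q\vee a - q}L_{q\vee a - a}^*$. All remaining rearrangements involve pairs of factors whose semigroup indices lie in complementary coordinate blocks (one in $\NN^k\times 0_d$, one in $0_k\times\NN^d$), for which $\vee$ coincides with $+$, so \lemref{lem:relprimecomm} combined with conditions (2) and (3) of \defref{dfn-covariance} lets them commute up to scalars. Tracking the contributions to the $\NN^k\times 0_d$ slots gives $P = p + (q\vee a - q)$ and $Q = b + (q\vee a - a)$.

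By \eqref{equ:charac-ground}, $\varphi(XY)$ vanishes unless $P = Q = 0$, and since $p, b, q\vee a - q, q\vee a - a$ are all nonnegative vectors in $\NN^k\times 0_d$, this forces $p = b = 0$ and $q = a$. In that case $\braket{a-b}{r} = \braket{q}{r} \geq 0$, so $|e^{i\braket{q}{r}z}| = e^{-\braket{q}{r}\operatorname{Im} z} \leq 1$ on the upper half plane, which completes the argument. I expect the main obstacle to be the combinatorial bookkeeping in the rearrangement, but it is mechanical once one observes that the two external factors $L_p$ and $L_b^*$ have their indices in $\NN^k\times 0_d$ and the covariance identity for $L_q^*L_a$ contributes only nonnegative vectors in the same block, so the external indices in the canonical form can only accumulate, never cancel; this is precisely what makes the vanishing condition \eqref{equ:charac-ground} force $p=b=0$ and $q=a$.
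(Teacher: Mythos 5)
Your proposal is correct and follows essentially the same route as the paper: the forward direction tests boundedness on $A=L_pL_xL_y^*$, $B=L_q^*$ and uses the adjoint for the $p$-part, and the converse rewrites $XY$ in canonical form via \defref{dfn-covariance}(3) so that nonnegativity of $p+(q\vee a-q)$ and $b+(q\vee a-a)$ forces $p=b=0$ and $q=a$, leaving a bounded exponential factor. The only cosmetic difference is that you also collapse the middle block $L_xL_y^*L_sL_t^*$ into a single $L_{X'}L_{Y'}^*$, which the paper leaves implicit.
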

 Next we see that ground states  are supported on the corner $Q\Tt_r(\NN^n,\sigma_\Theta)Q$.
\begin{lemma}\label{lem:gound Q}
 Let  $Q $ be the projection from \eqref{eqn:projQ} and suppose that $\varphi$ is a ground state of  $(\Tt_r(\NN^n,\sigma_\Theta), \alpha)$. Then  
 \[
 \varphi(L_xL_y^*)=\varphi(QL_xL_y^*Q), \qquad (x,y\in0_k\times\NN^d).
 \]
\end{lemma}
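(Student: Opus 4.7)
My plan is to reduce the identity $\varphi(L_xL_y^*) = \varphi(QL_xL_y^*Q)$ to the characterisation of ground states given in \proref{prop:KMS-characterisationgr}, by expanding the product that defines $Q$ in a convenient way.

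First I would observe that $Q$ commutes with $L_xL_y^*$ whenever $x,y \in 0_k\times \NN^d$. This follows directly from \lemref{lemma:Q-properties}(2) (or alternatively from \lemref{lem:relprimecomm}, since $e_j \vee z = e_j + z$ for every $z \in 0_k\times \NN^d$, so each range projection $L_{e_j}L_{e_j}^*$ commutes with $L_x$ and $L_y^*$). Using $Q^2 = Q$, this gives $QL_xL_y^*Q = L_xL_y^*Q$, and hence the claim is equivalent to $\varphi\bigl(L_xL_y^*(1-Q)\bigr)=0$.

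Next I would expand
$$1 - Q = \sum_{\emptyset \neq J\subseteq \{1,\dots,k\}}(-1)^{|J|+1}\prod_{j\in J}L_{e_j}L_{e_j}^*,$$
and invoke Nica-covariance (\lemref{lem:nicacovariance}) to rewrite each summand as $L_{p_J}L_{p_J}^*$ where $p_J = \sum_{j\in J} e_j \in \NN^k\times 0_d$ is nonzero. So it suffices to prove that $\varphi\bigl(L_xL_y^* L_{p_J} L_{p_J}^*\bigr)=0$ for every nonempty $J$.

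For this I would move the range projection $L_{p_J}L_{p_J}^*$ across $L_xL_y^*$ using \lemref{lem:relprimecomm} (noting $p_J \vee x = p_J + x$ and $p_J\vee y = p_J+y$), obtaining $L_xL_y^* L_{p_J}L_{p_J}^* = L_{p_J}L_{p_J}^* L_x L_y^*$. Then I would use \defref{dfn-covariance}(3) to slide $L_{p_J}^*$ past $L_x$, which only produces a unimodular scalar $c\in\TT$, yielding
$$L_xL_y^* L_{p_J}L_{p_J}^* = c\, L_{p_J}\, L_x L_y^*\, L_{p_J}^*.$$
Since $p_J\neq 0$, \proref{prop:KMS-characterisationgr} gives $\varphi(L_{p_J}L_xL_y^*L_{p_J}^*)=0$, so $\varphi(L_xL_y^*L_{p_J}L_{p_J}^*)=0$ as desired. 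Summing over $J$ completes the proof.

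No step is a genuine obstacle here: everything comes down to using \lemref{lem:relprimecomm} to transfer $L_xL_y^*$ through the range projections appearing in $1-Q$, and then applying the vanishing from \proref{prop:KMS-characterisationgr}. The only mildly delicate bookkeeping is keeping track of the unimodular scalar when commuting $L_{p_J}^*$ past $L_x$, but this is harmless since it only multiplies a term that is already known to vanish.
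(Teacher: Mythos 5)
Your proposal is correct and follows essentially the same route as the paper: expand $Q$ (equivalently $1-Q$) as an inclusion--exclusion sum of range projections, note that the $J=\emptyset$ term gives $\varphi(L_xL_y^*)$, and kill every nonempty-$J$ term by rewriting it, up to a unimodular scalar, as $\varphi(L_{p_J}L_xL_y^*L_{p_J}^*)$ and invoking Proposition~\ref{prop:KMS-characterisationgr}. The paper's version is just terser about the scalar bookkeeping that you spell out.
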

\begin{proof} Let $\varphi$ be a ground state of  $(\Tt_r(\NN^n,\sigma_\Theta),\alpha)$. Using the expansion formula for~$Q$ given in \eqref{equ:Q-sum} we have
\begin{equation}\label{komaki}
\begin{aligned}
\varphi(QL_xL_y^*Q)=\varphi(L_xL_y^*Q) =\sum_{J\subseteq \{1,\dots, k\}}(-1)^{|J|}\varphi\big(L_xL_y^*\prod_{j \in J}(L_{e_j}L_{e_j}^*)\big).
\end{aligned}
\end{equation} Since $\varphi$ is a ground state, it follows from \proref{prop:KMS-characterisationgr} that $\varphi(L_pL_xL_y^*L_q^*)=0$ whenever $p,q\in\NN^k\times 0_d$ are not both zero. So the terms for $J\neq \emptyset$ in the sum on the right hand side of~\eqref{komaki} will vanish. Hence 
$\varphi(QL_xL_y^*Q)=\varphi(L_xL_y^*)$ as wished.
\end{proof}

\begin{proposition}\label{isomorphisms kmsinfinity states} Let $Q $ be the projection from \eqref{eqn:projQ}. For each state $\omega$ of the corner  $Q \Tt_r(\NN^n,\sigma_\Theta) Q$,  define
\[
T_\infty (\omega) (X)\coloneqq \omega(Q X Q), \qquad (X\in \Tt_r(\NN^n,\sigma_\Theta)).
\] 
Then
\begin{enumerate}
\item  $T_ \infty$ is an affine weak* homeomorphism of the state space of $Q \Tt_r(\NN^n,\sigma_\Theta) Q$ onto the ground state space of 
$(\Tt_r(\NN^n,\sigma_\Theta), \alpha)$; 
\item $T_\infty$ restricted to  tracial states is an affine   weak* homeomorphism of  the tracial state space of $Q \Tt_r(\NN^n,\sigma_\Theta) Q$ onto the \kmsi state space of $(\Tt_r(\NN^n,\sigma_\Theta), \alpha)$.
\end{enumerate} 
\begin{proof}
For part (1),  take a state $\omega$ of the corner $Q \Tt_r(\NN^n,\sigma_\Theta) Q$. Clearly $T_\infty(\omega)$ is a state of $\Tt_r(\NN^n,\sigma_\Theta)$. Also, Lemma~\ref{lemma:Q-properties}(1)  implies that for $p,q\in\NN^k\times 0_d$
and $x,y\in 0_k \times \NN^d$
\begin{align*}
T_\infty(\omega)(L_pL_xL_y^*L_q^*)=\omega(QL_pL_xL_y^*L_q^*Q)
\end{align*}
vanishes unless $p =0 = q$.  Hence $T_\infty(\omega)$ satisfies  equation \eqref{equ:charac-ground} of \proref{prop:KMS-characterisationgr} and so it  is a  ground state of $(\Tt_r(\NN^n,\sigma_\Theta), \alpha)$.  This shows that $T_\infty$ maps the state space of the corner $Q\Tt_r(\NN^n,\sigma_\Theta)Q$ into the ground state space of  $(\Tt_r(\NN^n,\sigma_\Theta), \alpha)$. Clearly $T_\infty$ is a continuous affine map and it is also injective because $\omega$ can be recovered from $T_\infty(\omega)$ by restricting it to the corner.

In order to prove that $T_\infty$ is surjective, suppose $\varphi$  is a ground state of $(\Tt_r(\NN^n,\sigma_\Theta), \alpha)$. Let $\omega_\varphi$ be the restriction of ~$\varphi$ to the corner $Q\Tt_r(\NN^n,\sigma_\Theta)Q$, that is,
\[
\omega_\varphi(X)\coloneqq \varphi(X)\qquad (X\in Q\Tt_r(\NN^n,\sigma_\Theta)Q).
\]
It follows from \lemref{lem:gound Q} that $\omega_\varphi(Q) = \varphi(Q) =1$, and so $\omega_\varphi$  is a state of $  Q\Tt_r(\NN^n,\sigma_\Theta)Q $. We claim that $T_\infty(\omega_\varphi)=\varphi$. To see this,  it suffices to show that $T_\infty(\omega_\varphi)(X)=\varphi(X)$ for $X= L_pL_xL_y^*L_q^*$, where $p,q\in\NN^k\times 0_d$ and $x,y\in 0_k\times\NN^d$. Since $\varphi$ and $T_\infty(\omega_\varphi)$ are ground states,  $T_\infty(\omega_\varphi)(X)=\varphi(X)=0$ when $p$ and $q$ are not both zero by \proref{prop:KMS-characterisationgr}.  Suppose now $p=q=0$. Then 
$$\varphi(L_xL_y^*)=\varphi(QL_xL_y^*Q)=\omega_\varphi(QL_xL_y^*Q)=T_\infty(\omega_\varphi)(L_xL_y^*),$$ where the first equality  follows from \lemref{lem:gound Q}. This shows that $\varphi=T_\infty(\omega_\varphi)$, proving that~$T_\infty$ is surjective. Therefore $T_\infty$ is an affine weak* homeomorphism as asserted, finishing the proof of part (1).

For part~(2) it suffices to verify that $T_\infty$ maps the tracial state space of the corner $Q \Tt_r(\NN^n,\sigma_\Theta) Q$ onto the space of KMS$_\infty$ states of $(\Tt_r(\NN^n,\sigma_\Theta), \alpha)$. Suppose first  $\omega$ is a tracial state of the corner $Q \Tt_r(\NN^n,\sigma_\Theta) Q$ and let $(\beta_j)_{j\in\NN}$ be any sequence of positive real numbers such that $\lim_{\substack{j\to \infty}}\beta_j= \infty$.  By \proref{isomorphisms kmsbeta states} each $T_{\beta_j}(\omega)$ is a KMS$_{\beta_j}$ state for $\alpha$. Since $QL_p =0$ for $p \in \NN^k\times 0_d \setminus \{0\} $ by \lemref{lemma:Q-properties}(1), for every $X \in Q\Tt_r(\NN^n,\sigma_\Theta)Q$ we have 
\[
T_{\beta_j}(\omega)(X) =\frac{1}{Z(\beta_j)} \sum_{p\in \NN^k\times 0_d} e^{-\beta_j \braket{p}{r}} \omega( QL_p^* (QXQ) L_p Q) = \frac{1}{Z(\beta_j)} \omega(X).
\]
Since $T_{\beta_j}(\omega)(Q)\inv =Z(\beta_j)  \to 1$ by \lemref{lemma:reconstruction}, see \eqref{eqn:phiofQcomputation}, this shows that $T_\infty(\omega)$ is KMS$_\infty$.

Suppose now that $\varphi$ is a \kmsi state of $(\Tt_r(\NN^n,\sigma_\Theta),\alpha)$. Then its restriction $\omega_\varphi$  to the corner is a limit of tracial states and satisfies $\omega_\varphi(Q) = \lim_j Z(\beta_j)\inv =1$,
 so it is a tracial state itself. Hence  $\varphi = T_\infty(\omega_\varphi)$ lies in the range of the restriction of $T_\infty$ to tracial states. This completes the proof of part (2).
\end{proof}
\end{proposition}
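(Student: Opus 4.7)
The plan is to establish both parts by directly checking against the characterisation of ground states given in \proref{prop:KMS-characterisationgr}. For part~(1), given a state $\omega$ on $Q\Tt_r(\NN^n,\sigma_\Theta)Q$, one immediately has $T_\infty(\omega)\geq 0$ and $T_\infty(\omega)(1)=\omega(Q)=1$, since $Q$ is the unit of the corner. To see that $T_\infty(\omega)$ is a ground state, I would invoke \lemref{lemma:Q-properties}(1): since $QL_p=0=L_p^*Q$ for every nonzero $p\in\NN^k\times 0_d$, the element $QL_pL_xL_y^*L_q^*Q$ vanishes whenever $(p,q)\ne(0,0)$, so condition~\eqref{equ:charac-ground} holds. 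Injectivity of $T_\infty$ is automatic because the restriction to the corner recovers $\omega$. For surjectivity, given a ground state $\varphi$, define $\omega_\varphi\coloneqq\varphi|_{Q\Tt_r(\NN^n,\sigma_\Theta)Q}$; the case $x=y=0$ in \lemref{lem:gound Q} gives $\varphi(Q)=1$, so $\omega_\varphi$ is a state. To verify $T_\infty(\omega_\varphi)=\varphi$, I would test on the spanning family $\{L_pL_xL_y^*L_q^*\}$: for $(p,q)\neq(0,0)$ both sides vanish by \proref{prop:KMS-characterisationgr} and \lemref{lemma:Q-properties}(1), while for $p=q=0$ the identity $\varphi(L_xL_y^*)=\varphi(QL_xL_y^*Q)$ of \lemref{lem:gound Q} yields the equality. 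Affinity and weak* continuity of $T_\infty$ are manifest from the definition, its inverse (restriction to the corner) is also weak* continuous, and both spaces are compact Hausdorff, so $T_\infty$ is a weak* homeomorphism.

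For part~(2), the additional task is to match tracial states on the corner with \kmsi states of $(\Tt_r(\NN^n,\sigma_\Theta),\alpha)$. For a tracial $\omega$ and any sequence $\beta_j\to\infty$, the functionals $T_{\beta_j}(\omega)$ are \kmsb states by \proref{isomorphisms kmsbeta states}. I would show $T_{\beta_j}(\omega)\to T_\infty(\omega)$ in the weak* topology by evaluating on the spanning family $\{L_pL_xL_y^*L_q^*\}$: terms with $p\neq q$ vanish identically on both sides by \eqref{equ:charac-KMS} and \lemref{lemma:Q-properties}(1); for $p=q$, the computation leading to \eqref{eqn:Tbetapradepois}, combined with the Euler product derivation carried out in \proref{pro:KMSfromtraces}, shows that $T_{\beta_j}(\omega)(L_pL_xL_y^*L_p^*)$ equals $e^{-\beta_j\braket{p}{r}}\omega(QL_xL_y^*Q)$ times a cocycle-dependent Euler factor that converges to $1$ as $\beta_j\to\infty$; the limit thus vanishes when $p\neq 0$ and equals $\omega(QL_xL_y^*Q)$ when $p=0$, matching $T_\infty(\omega)(L_pL_xL_y^*L_p^*)$ in both cases via \lemref{lemma:Q-properties}(1). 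Conversely, if $\varphi$ is \kmsi, written as $\varphi=\lim_j\varphi_{\beta_j}$ with each $\varphi_{\beta_j}$ \kmsb, then inverting $T_{\beta_j}$ as in \proref{isomorphisms kmsbeta states} gives $\varphi_{\beta_j}|_{Q\Tt_r(\NN^n,\sigma_\Theta)Q}=Z(\beta_j)^{-1}\omega_j$ for tracial states $\omega_j$ of the corner; since $Z(\beta_j)\to 1$ by \eqref{eqn:phiofQcomputation}, the restriction $\omega_\varphi$ is a weak* limit of tracial states, hence tracial, and $\varphi=T_\infty(\omega_\varphi)$ by part~(1).

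The main obstacle I foresee is the convergence argument in part~(2): while each pointwise computation on a generator $L_pL_xL_y^*L_q^*$ is elementary, one has to handle the interplay between the exponential weights $e^{-\beta_j\braket{p}{r}}$, which collapse the sum to the $p=0$ term in the limit, and the Euler-type factors arising from the cocycle, which must be shown to tend uniformly to $1$ over the relevant generators. Once pointwise convergence on the spanning family is secured, weak* convergence on the whole algebra is routine from the uniform norm bound on states, and the compactness of the state spaces then upgrades the resulting continuous bijection to a homeomorphism.
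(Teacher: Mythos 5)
Your proposal is correct and follows essentially the same route as the paper: part (1) via \lemref{lemma:Q-properties}(1), \lemref{lem:gound Q} and the ground-state characterisation of \proref{prop:KMS-characterisationgr}, and part (2) via the maps $T_{\beta_j}$ of \proref{isomorphisms kmsbeta states} with $Z(\beta_j)\to 1$. The only (harmless) difference is that in part (2) you verify the weak* convergence $T_{\beta_j}(\omega)\to T_\infty(\omega)$ on the full spanning family using the Euler factors of \proref{pro:KMSfromtraces}, whereas the paper only records the computation on the corner; your version is, if anything, slightly more complete, and your worry about uniformity is unnecessary since pointwise convergence on a total set plus the norm bound on states suffices.
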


\subsection{Invariant traces and \kmsop states}
When $\beta =0$ the usual \kmsb condition for a $\Cst$-dynamical system $(A,\alpha)$ just says that the state has to be a trace, cf. 
\cite[Definition 5.3.1]{bra-rob}. Nevertheless, in many applications and equivalent formulations it is necessary to add the extra assumption of $\alpha$\nb-invariance. See, for example, \cite[Theorem 5.3.22]{bra-rob}. In fact, this assumption is often included in the definition,  so that
 the \kmso states are the $\alpha$-invariant traces on $A$, cf. \cite[8.12.2]{ped}.

Here we would like to draw a parallel to the distinction of \kmsi states among all ground states by defining \kmsop states to be the  weak* limits of \kmsb states as $\beta \to 0^+$. 
By \cite[Proposition~5.3.25]{bra-rob} we see that  \kmsop states are traces; in fact, they are also \kmso states because $\alpha$\nb-invariance is preserved under weak* limits.

By taking limits in the formula for \kmsb states obtained in \proref{pro:KMSfromtraces}  we can give a complete description of the space of \kmsop states of our system
$(\Tt_r(\NN^n,\sigma_\Theta),\alpha)$.

\begin{proposition} Identify $\Cst(L_x\mid x\in 0_k\times \NN^d) $ with $\Tt_r(\NN^d,\sigma_{\Thetad})$ canonically and let $\varphi$ be a state of $\Tt_r(\NN^n,\sigma_{\Theta})$. Then $\varphi$ is a \kmsop state of $(\Tt_r(\NN^n,\sigma_{\Theta}),\alpha)$ if and only if it is a trace and satisfies $$\varphi(L_pL_xL_y^*L_q^*)=\delta_{p,q}\varphi(L_xL_y^*)\qquad p,q\in \NN^k\times 0_d,\, x,y\in0_k\times\NN^d.$$ As a consequence, the map that sends a KMS$_{0^+}$ state $\varphi$ of $(\Tt_r(\NN^n,\sigma_{\Theta}),\alpha)$ to its restriction to $\Tt_r(\NN^d,\sigma_{\Thetad})$ is an affine weak* homeomorphism from  the space of KMS$_{0^+}$ states of $(\Tt_r(\NN^n,\sigma_{\Theta}),\alpha)$ onto the space of the tracial states $\tau$ of $\Tt_r(\NN^d,\sigma_{\Thetad})$ such that for all $x,y\in 0\times \NN^d \cong \NN^d$
\begin{equation}\label{eq:plus-states}\tau(L_xL_y^*)=0\quad\text{unless}\quad \braket{\Theta (x-y)}{e_j} \in \ZZ \text{ for } 1\leq j \leq n. 
\end{equation}
\begin{proof} Clearly if $\varphi$ is a \kmsop state, then it is a trace and satisfies $\varphi (L_pL_xL_y^*L_q^*)=\delta_{p,q} \varphi(L_xL_y^*)$. In order to prove the converse and the second part in the statement, we begin by considering the  individual ratios from  \eqref{eqn:TbetaEuler}, namely 
\[
\frac{e^{-\beta r_j p_j}(1 - e^{-\beta r_j })}{1- e^{-\beta r_j +2\pi i \braket{\Theta (x-y)}{e_j}}},
\]
 for each $x,y \in 0_k \times \ZZ^d$ and each $j = 1, 2, \ldots ,k$.  As $\beta \to 0,$ the numerator tends to $0$ while the denominator tends to $1 - e^{2\pi i \braket{\Theta (x-y)}{e_j}}$.
Thus the $j$\nb-th ratio tends to zero unless $\braket{\Theta (x-y)}{e_j} \in \ZZ$, in which case the limit is clearly~$ 1$.  Hence
\begin{equation}\label{eqn:limit0+}
\lim_{\beta\to 0^+} \prod_{j=1}^k \frac{e^{-\beta r_j p_j}(1 - e^{-\beta r_j })}{1- e^{-\beta r_j +2\pi i \braket{\Theta (x-y)}{e_j}}}
=
\begin{cases}
1 & \text{ if } \braket{\Theta (x-y)}{e_j} \in \ZZ  \quad 1\leq j \leq k\\
0 & \text{ otherwise.}
\end{cases}
\end{equation}

So let $\varphi$ be a state of $\Tt_r(\NN^n,\sigma_\Theta)$ and suppose that $$\varphi (L_pL_xL_y^*L_q^*)=\delta_{p,q} \varphi(L_xL_y^*) =\delta_{p,q},$$ for all $p,q\in \NN^k\times0_d$, and $x,y\in 0_k\times\NN^d$. Set $\tau\coloneqq\varphi\restriction_{\Tt_r(\NN^d,\sigma_{\Theta_d})}.$ Then $\tau$ is a trace of $\Tt_r(\NN^d,\sigma_{\Theta_d})$, and because it is the restriction of a trace of  $\Tt_r(\NN^n,\sigma_\Theta)$, it satisfies 
 $\tau(L_xL_y^*) =0$ unless $\braket{\Theta(x-y)}{e_j} =0$ for $j=1, \ldots, n$ by \proref{pro:tracesfactor}. Thus $\tau$ satisfies \eqref{eq:plus-states}. Take a sequence $(\beta_n)_{n\in\NN}$ of positive real numbers converging to~$0$. Taking limits in formula \eqref{eqn:TbetaEuler} for $T_{\beta_n}( \tau\circ \rho_Q\inv)$ and using \eqref{eqn:limit0+}, we see that the weak* limit of $(T_{\beta_n}( \tau\circ \rho_Q\inv))_{n\in\NN}$ is precisely $\varphi$. Hence $\varphi$ is \kmsop state as wished. 

Since a \kmsop state $\varphi$ of $(\Tt_r(\NN^n,\sigma_\Theta),\alpha)$ satisfies $$\varphi (L_pL_xL_y^*L_q^*)=\delta_{p,q} \varphi(L_xL_y^*),$$ it is determined by its restriction $\tau$ to $\Tt_r(\NN^d,\sigma_{\Theta_d})$. Also, we deduce from the above that $\varphi$ is the weak* limit of $(T_{\beta_n}( \tau\circ \rho_Q\inv))_{n\in\NN}$, and $\tau$ is a tracial state of $\Tt_r(\NN^d,\sigma_{\Thetad})$ that satisfies \eqref{eq:plus-states}. This gives the second part in the statement and finishes the proof of the proposition.
\end{proof}
\end{proposition}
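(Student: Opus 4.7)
The plan is to exploit the explicit Euler product in \eqref{eqn:TbetaEuler} and its pointwise limit as $\beta\to 0^+$. For each $j=1,\dots,k$ the $j$\nb-th factor has numerator $e^{-\beta r_jp_j}(1-e^{-\beta r_j})\to 0$ and denominator tending to $1-e^{2\pi i\braket{\Theta(x-y)}{e_j}}$, which is nonzero unless $\braket{\Theta(x-y)}{e_j}\in\ZZ$; in the latter case numerator and denominator cancel and the ratio simplifies to $e^{-\beta r_jp_j}\to 1$. Thus the full product converges to the indicator of the event that $\braket{\Theta(x-y)}{e_j}\in\ZZ$ for every $j=1,\dots,k$.

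The forward direction is immediate: if $\varphi$ is a \kmsop state, then by definition it is the weak$^*$ limit of some sequence $\varphi_{\beta_n}$ of $\mathrm{KMS}_{\beta_n}$ states with $\beta_n\to 0^+$. Applying \proref{prop:KMS-characterisation} to each $\varphi_{\beta_n}$ and passing to the limit gives $\varphi(L_pL_xL_y^*L_q^*)=\delta_{p,q}\varphi(L_xL_y^*)$, while the trace property follows from \cite[Proposition~5.3.25]{bra-rob}.

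For the converse, suppose $\varphi$ is a trace of $\Tt_r(\NN^n,\sigma_\Theta)$ satisfying the formula and set $\tau:=\varphi|_{\Tt_r(\NN^d,\sigma_{\Thetad})}$. The key step is that $\tau$ obeys the full condition \eqref{eq:plus-states}. Indeed, \lemref{tracesfromcentre} applied to $\Theta$ itself (its proof goes through verbatim for any trace, since the trace identity forces $\varphi(L_{e_j}L_{e_j}^*)=\varphi(L_{e_j}^*L_{e_j})=1$ for every $j$) shows that $\varphi$ factors through the noncommutative torus $\A_\Theta$, and then through $\mathrm{Z}(\A_\Theta)$ by \proref{pro:tracesfactor}. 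Consequently $\varphi(L_\alpha L_\gamma^*)=0$ whenever $\Theta(\alpha-\gamma)\notin\ZZ^n$; restricting to $\alpha,\gamma\in 0_k\times\NN^d$ yields \eqref{eq:plus-states}. Now fix any sequence $\beta_n\to 0^+$ and put $\varphi_n:=T_{\beta_n}(\tau\circ\rho_Q\inv)$, each a $\mathrm{KMS}_{\beta_n}$ state by \proref{pro:KMSfromtraces}. Evaluating $\varphi_n$ on a monomial $L_pL_xL_y^*L_q^*$ via \eqref{eqn:TbetaEuler} and the Euler-product limit gives $\delta_{p,q}\tau(L_xL_y^*)$ when $\braket{\Theta(x-y)}{e_j}\in\ZZ$ for every $j\leq k$, and zero otherwise. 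By \eqref{eq:plus-states} the ``otherwise'' case already has $\tau(L_xL_y^*)=0$, so in all cases the limit coincides with $\varphi(L_pL_xL_y^*L_q^*)$; since such monomials span densely, $\varphi_n\to \varphi$ weak$^*$, proving $\varphi$ is \kmsop.

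The homeomorphism assertion then follows: the restriction map is affine and weak$^*$ continuous, injective since $\varphi$ is determined by $\tau$ via the formula, and surjective by the construction just described. The main obstacle is the derivation of the constraints in \eqref{eq:plus-states} with $j\leq k$: a trace on the subalgebra $\Tt_r(\NN^d,\sigma_{\Thetad})$ in isolation only produces the constraints with $j>k$, and recovering the missing ones genuinely requires the trace structure on the full algebra, mediated by the factorization through $\mathrm{Z}(\A_\Theta)$.
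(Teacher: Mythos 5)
Your proof is correct and follows essentially the same route as the paper's: the same Euler-product limit \eqref{eqn:limit0+}, the same derivation of \eqref{eq:plus-states} from the tracial structure of $\varphi$ on the full algebra via \proref{pro:tracesfactor}, and the same approximating sequence $T_{\beta_n}(\tau\circ\rho_Q\inv)$ converging weak* to $\varphi$ on the spanning monomials. Your closing remark that the constraints for $j\leq k$ genuinely require the trace property on all of $\Tt_r(\NN^n,\sigma_\Theta)$ rather than just on the subalgebra is a correct and useful clarification of a point the paper leaves implicit in the phrase ``because it is the restriction of a trace of $\Tt_r(\NN^n,\sigma_\Theta)$''.
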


Recall from the beginning of \secref{sec:characterisation} that the dynamics determined by a vector $r$ on $\Tt_r(\NN^n,\sigma_\Theta)$ is a subgroup of the gauge action, given by $\alpha_t = \gamma_{e^{i(r) t}} $ for $t \in\RR$. With our convention that the  first $k$ coordinates of $r$ are strictly positive and the rest are zero, $e^{i(r)t} = (e^{itr_1}, e^{itr_2}, \ldots , e^{itr_n}) \in \TT^k \times \{1_d\}$.
By continuity, a state of $\Tt_r(\NN^n,\sigma_\Theta)$ is $\alpha^r$ invariant if and only if it is invariant under the action of the closure of $\{(e^{itr_1}, e^{itr_2}, \ldots , e^{itr_k}) \mid t\in \RR\}$ in $\TT^k$, viewed as a compact  subgroup of  $\TT^k \times \{1_d\}$. 

In order to determine the \kmso states, namely the $\alpha$-invariant traces, we consider the group 
\[
H_{n}^r\coloneqq\{w\in\ZZ^n\mid \braket{w}{\Theta v}\in \ZZ\text{ for all }v\in\ZZ^n\}\cap\{w\in\ZZ^n\mid \braket{w}{r}=0\}
\] and its annihilator
\[
\Lambda_n^r \coloneqq \{ z\in \TT^n\mid z^h = {\textstyle \prod_{j=1}^n z_j^{h_j}} =1 \text{ for all } h\in H_n^r\}.
\]
Let $U_1,\ldots, U_n$ be the canonical unitary generators of~$\A_\Theta$. The conditional expectation obtained from averaging with respect to the restriction of the gauge action on $\A_\Theta$ to $\Lambda_n^r$
satisfies 
\begin{equation} \label{eqn:compositeexpectation}
E^{\Lambda_n^r}(U_v) = \begin{cases} U_v&\text{ if } v \in H_n^r\\
						0 & \text{ otherwise,}
\end{cases}
\end{equation}
where $U_v=U_1^{v_1}\ldots U_n^{v_n}$ with $v=(v_1,\ldots, v_n)\in\ZZ^n$.
\begin{proposition} 
 The map $\omega \mapsto \omega\circ E^{\Lambda_n^r} \circ \pi$ is an affine weak* homeomorphism
of the state space of the fixed point algebra of the action of $\Lambda_n^r$ (which is the $\alpha$-invariant part of ${\mathrm Z}(\A_\Theta)$) and the space of \kmso states of  $(\Tt_r(\NN^n,\sigma_\Theta), \alpha)$.
If $H_n^r$  has nonzero intersection with $\ZZ^k\times 0_d$, then the space of \kmsop states 
is properly contained in the space of \kmso states.
\begin{proof} 
Notice that the range of the conditional expectation $E^{\Lambda_n^r}$ on $\A_\Theta$ is simply 
\[
\clsp \{U_v \mid v \in H_n^r\} = \clsp \{U_v \mid \Theta v\in \ZZ^n \} \cap \clsp \{U_w\mid \braket{w}{r} =0 \}.
\]
 In fact, one can easily verify using \eqref{eqn:compositeexpectation} that $E^{\Lambda_n^r}$ is the composition of
the conditional expectation onto ${\mathrm Z}(\A_\Theta)$  with the conditional expectation onto the fixed point algebra of the dynamics on $\A_\Theta$ corresponding to~$\alpha$.
So by \lemref{tracesfromcentre} and \proref{pro:tracesfactor}, $\omega\circ E^{\Lambda_n^r} \circ \pi$ is a tracial state of $\Tt_r(\NN^n,\sigma_\Theta)$ for every state $\omega$ on the fixed point algebra of $\Lambda_n^r$. Since $E^{\Lambda_n^r}(U_v) =0$ unless $\braket{v}{r} =0$, it is clear that $\omega\circ E^{\Lambda_n^r}\circ \pi$ is  $\alpha$-invariant. To show that the map $\omega \mapsto \omega\circ E^{\Lambda_n^r} \circ \pi$ is also surjective, let $\omega$ be a tracial state of $\A_\Theta$ such that the corresponding tracial state $\omega\circ\pi$ of $\Tt_r(\NN^n,\sigma_\Theta)$ is $\alpha$\nb-invariant and let $v,w\in\NN^n$. Then for all $t\in\RR$ one has $$(\omega\circ\pi)(L_vL_w^*)=e^{it\braket{v-w}{r}}(\omega\circ\pi)(L_vL_w^*),$$ which implies $(\omega\circ\pi)(L_vL_w^*)=0$ if $\braket{v-w}{r}\neq 0$. By \proref{pro:tracesfactor}, we also have $(\omega\circ\pi)(L_vL_w^*)=0$ if $\Theta(v-w)\not\in\ZZ^n$. Hence $\omega\circ\pi=\omega\circ E^{\Lambda_n^r}\circ\pi$ and we then deduce that the map $\omega\mapsto \omega\circ E^{\Lambda_n^r}\circ\pi$ is an affine weak* homeomorphism from the space of states of the fixed point algebra of $\Lambda_n^r$ onto the space of $\alpha$\nb-invariant tracial states of $\Tt_r(\NN^n,\sigma_\Theta)$.
 
For the last assertion, suppose the group $H_n^r$ is not contained in $0_k\times \ZZ^d$. Then we can find $v,w\in\NN^n$ such that $v-w\in H_n^r$ and the first $k$ coordinates of $v-w$ are not all zero. Write $v=p+x$ with $p\in\NN^k\times0_d$ and $x\in 0_k\times\NN^d$ and $w=q+y$ with $q\in\NN^k\times 0_d$ and $y\in 0_k\times\NN^d$. It follows that $p-q\neq 0$, and  $\pi(L_{p}L_xL_y^*L_q^*)$ is an element of the center $\mathrm{Z}(\A_\Theta)$ that is fixed by $\alpha$, so that $\pi(L_{p}L_xL_y^*L_q^*)=(E^{\Lambda_n^r}\circ\pi)(L_{p}L_xL_y^*L_q^*)$. Let $\omega$ be a state of the range of $E^{\Lambda_n^r}$ that does not vanish on $\pi(L_{p}L_x L_y^*L_q^*)$. Then $\omega\circ E^{\Lambda_n^r}\circ\pi$ is an $\alpha$\nb-invariant tracial state of $\Tt_r(\NN^n,\sigma_\Theta)$ that does not vanish on $L_{p}L_xL_y^*L_q^*$. This gives a tracial state of $\Tt_r(\NN^n,\sigma_\Theta)$ that is not a \kmsop state because $p\neq q$.
\end{proof}
\end{proposition}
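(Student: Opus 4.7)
My plan is to reduce the claim to a direct statement about $\A_\Theta$ via the trace-factorisation results of Section~\ref{sec:traces}. Since \kmso states are by definition the $\alpha$-invariant tracial states, and \lemref{tracesfromcentre} identifies tracial states of $\Tt_r(\NN^n,\sigma_\Theta)$ with those of $\A_\Theta$ via pullback along the $\alpha$-equivariant quotient map $\pi$, the task reduces to characterising the $\alpha$-invariant tracial states of $\A_\Theta$.

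First I would establish the parenthetical claim in the statement. By Pontryagin duality, the fixed-point algebra of $\Lambda_n^r$ in $\A_\Theta$ is $\clsp\{U_v \mid v \in H_n^r\}$, and by the very definition of $H_n^r$ this equals ${\mathrm Z}(\A_\Theta)^\alpha$: membership in $H_n^r$ is the conjunction of the degeneracy condition (which parametrises the centre by \lemref{lem:actionandfpa}) with $\braket{v}{r}=0$ (which parametrises the $\alpha$-fixed spectral subspace of the gauge action). Consequently $E^{\Lambda_n^r}$ factors as $E^\alpha \circ E^\Lambda$, with $E^\alpha$ the conditional expectation onto the $\alpha$-fixed subalgebra of $\A_\Theta$. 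Hence, for any state $\omega$ of the fixed-point algebra, the composite $\omega \circ E^{\Lambda_n^r} \circ \pi$ is a trace by \proref{pro:tracesfactor} and is $\alpha$-invariant by \eqref{eqn:compositeexpectation}. For the reverse direction, if $\tau \circ \pi$ is an $\alpha$-invariant trace, \proref{pro:tracesfactor} factors $\tau$ through $E^\Lambda$, while $\alpha$-invariance forces $\tau(U_v)=0$ whenever $\braket{v}{r}\neq 0$ (using $\tau(U_v) = e^{it\braket{v}{r}}\tau(U_v)$ for all $t \in \RR$); thus $\tau = \omega \circ E^{\Lambda_n^r}$, with $\omega$ the restriction of $\tau$ to the fixed-point algebra. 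Affineness and weak* bicontinuity are immediate from the construction.

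For the proper containment of \kmsop states in \kmso states, I would invoke the characterisation from the preceding proposition, by which any \kmsop state $\varphi$ satisfies $\varphi(L_p L_x L_y^* L_q^*) = 0$ whenever $p \neq q$. Given a nonzero $h \in H_n^r \cap (\ZZ^k \times 0_d)$, I would choose $v, w \in \NN^n$ with $v-w = h$ and decompose $v = p+x$ and $w = q+y$ with $p, q \in \NN^k \times 0_d$ and $x, y \in 0_k \times \NN^d$; since $h$ has nontrivial first-$k$ part, $p \neq q$. Then $\pi(L_p L_x L_y^* L_q^*)$ is a unimodular scalar multiple of $U_h$, which is a nonzero element of the $\alpha$-fixed centre, so selecting a state $\omega$ of the fixed-point algebra with $\omega(U_h) \neq 0$ produces an \kmso state that violates the \kmsop criterion. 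The main obstacle I anticipate is the careful verification of the factorisation $E^{\Lambda_n^r} = E^\alpha \circ E^\Lambda$ and the identification of its range; once that is secured, the remainder is routine bookkeeping on the spanning monomials combined with the factorisation results already established in Section~\ref{sec:traces}.
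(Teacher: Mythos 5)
Your proposal is correct and follows essentially the same route as the paper's own proof: the same factorisation $E^{\Lambda_n^r}=E^{\alpha}\circ E^{\Lambda}$, the same appeal to \lemref{tracesfromcentre} and \proref{pro:tracesfactor} to identify the $\alpha$\nb-invariant traces, and the same construction of a central $\alpha$\nb-fixed monomial $U_h$ with $p\neq q$ to separate the \kmso states from the \kmsop states. The only cosmetic difference is that you work directly with a nonzero $h\in H_n^r\cap(\ZZ^k\times 0_d)$ while the paper argues under the slightly weaker hypothesis that $H_n^r\not\subseteq 0_k\times\ZZ^d$; both yield the same conclusion.
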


\begin{exm}
Let $\theta$ be an irrational number in $(0,1/2)$ and consider the matrix 
\begin{equation}\label{eqn:examplematrix}
\Theta=
\left[
\begin{array}{cccc}
0 & 0 &0 & 0\\
0 & 0 &0 &0 \\
0& 0&0 &\theta \\
0 & 0 &-\theta &0
\end{array}
\right].
\end{equation}
Fix $a>0$ and let $\alpha$ be the dynamics on $\Tt_r(\NN^n,\sigma_\Theta)$ defined by $r=(a,a,0,0)$.
Then $k=d=2$ and the lower right corner
$\Theta_d = \left[
\begin{array}{cc}
0 &\theta \\
-\theta &0
\end{array}
\right]
$ has degeneracy index $0$, so that $\A_{\Theta_d} \cong  A_{2\theta}$ is simple and has a unique tracial state. Hence there is a unique \kmsop state. But the center of $\A_\Theta $ is nontrivial and, more importantly, it contains nontrivial $\alpha$\nb-invariant elements, such as $U_1U_2^*$. As a consequence, there are more $\alpha$\nb-invariant traces on $\Tt_r(\NN^n,\sigma_\Theta)$  than just the unique \kmsop state.
\end{exm}

\section{An alternative approach via product systems}\label{sec:alternative}
In this section we realise $\Tt_r(\NN^n,\sigma_\Theta)$ as the Nica--Toeplitz algebra of a compactly aligned product system over $\NN^k$ in which the underlying coefficient algebra is precisely the Toeplitz noncommutative torus $\Tt_r(\NN^d,\sigma_{\Theta_d})$, where $\Theta_d$ denotes the $d\times d$ bottom-right corner submatrix of $\Theta$, and $\sigma_{\Theta_d}$ is the corresponding $2$\nb-cocycle on $\NN^d$. The motivation is to apply the characterisation of \kmsb states of Nica--Toeplitz algebras from \cite{ALN20} and compare it to our parametrisation.

We begin by defining an action of $\NN^k$ by endomorphisms of $\Tt_r(\NN^d,\sigma_{\Theta_d})$, which will then be used to construct a product system over $\NN^k$ with the desired properties. In order to lighten the notation, we let $A\coloneqq \Tt_r(\NN^d,\sigma_{\Theta_d})$ throughout this section. 

\begin{lemma}\label{lem:semi-action} Let $\Theta\in M_n(\RR)$ be an antisymmetric matrix and let $k,d\in\NN$ with $n=k+d$. Let  $\Theta_d$ be the $d\times d$ bottom-right corner submatrix of $\Theta$. Then for each $p\in \NN^k$, there is an automorphism $\rho_p\colon A\to A$ that sends an isometry~$L^{\sigma_{\Theta_d}}_x$ to $\bar{L}^{\sigma_{\Theta_d}}_x\coloneqq \sigma_\Theta(p,x)^2L^{\sigma_{\Theta_d}}_x$ for all $x\in\NN^d$. Moreover, the map $p\mapsto \rho_p$ is an action of $\NN^k$ by automorphisms of~$A$.
\begin{proof} Fix $p\in\NN^k$. As in the statement of the lemma, for $x\in \NN^d$ we let 
$$\bar{L}^{\sigma_{\Theta_d}}_x=\sigma_\Theta(p,x)^2L^{\sigma_{\Theta_d}}_x=e^{-2\pi i \braket{p}{\Theta x}}L^{\sigma_{\Theta_d}}_x\in A.$$
 Because $\sigma_\Theta$ is a bicharacter, the map $x\mapsto \bar{L}^{\sigma_{\Theta_d}}_x$ is an isometric 
 $\sigma_{\Theta_d}$\nb-representation of~$\NN^d$ in~$A$. Such a representation is Nica covariant since $$\bar{L}^{\sigma_{\Theta_d}}_x(\bar{L}^{\sigma_{\Theta_d}}_x)^*=L^{\sigma_{\Theta_d}}_x(L^{\sigma_{\Theta_d}}_x)^*$$ for all $x\in\NN^d$, and so $x\mapsto \bar{L}^{\sigma_{\Theta_d}}_x$ is a covariant isometric $\sigma_{\Theta_d}$\nb-representation. By universal property, we obtain an endomorphism $\rho_p\colon A\to A$ mapping $L^{\sigma_{\Theta_d}}_x$ to $\bar{L}^{\sigma_{\Theta_d}}_x$. Its inverse is the endomorphism induced by the covariant isometric $\sigma_{\Theta_d}$\nb-representation $x\mapsto \sigma_\Theta(x,p)^2L^{\sigma_{\Theta_d}}_x$, and hence $\rho_p$ is in fact an automorphism. Using again that $\sigma_\Theta$ is a bicharacter, we deduce that the map $p\mapsto \rho_p\in\mathrm{Aut}(A)$ is a semigroup action $\rho\colon\NN^k\to\mathrm{Aut}(A)$ on $A$.  This completes the proof of the lemma. 
\end{proof}
\end{lemma}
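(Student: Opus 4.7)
The plan is to produce $\rho_p$ by invoking the universal property of $A=\Tt_r(\NN^d,\sigma_{\Theta_d})$ from \corref{cor:faithful_charac}, so the first task is to verify that the proposed formula $x\mapsto \bar L^{\sigma_{\Theta_d}}_x\coloneqq \sigma_\Theta(p,x)^2 L^{\sigma_{\Theta_d}}_x$ defines a covariant isometric $\sigma_{\Theta_d}$\nb-representation of~$\NN^d$ in~$A$. Each $\bar L^{\sigma_{\Theta_d}}_x$ is an isometry because $|\sigma_\Theta(p,x)^2|=1$, and since $\sigma_\Theta$ is a bicharacter,
\[
\sigma_\Theta(p,x)\sigma_\Theta(p,y)=e^{-\pi i\braket{p}{\Theta x}}e^{-\pi i\braket{p}{\Theta y}}=e^{-\pi i\braket{p}{\Theta(x+y)}}=\sigma_\Theta(p,x+y),
\]
so $\bar L^{\sigma_{\Theta_d}}_x\bar L^{\sigma_{\Theta_d}}_y=\sigma_\Theta(p,x+y)^2\sigma_{\Theta_d}(x,y)L^{\sigma_{\Theta_d}}_{x+y}=\sigma_{\Theta_d}(x,y)\bar L^{\sigma_{\Theta_d}}_{x+y}$, giving relation \ref{dfn:cov-relation2} of \defref{dfn-covariance}. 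Nica covariance then follows at once because the unimodular scalar cancels in $\bar L^{\sigma_{\Theta_d}}_x(\bar L^{\sigma_{\Theta_d}}_x)^*=L^{\sigma_{\Theta_d}}_x(L^{\sigma_{\Theta_d}}_x)^*$, so the range projections match those of the left regular representation, which is Nica covariant by \lemref{lem:nicacovariance}. Applying \corref{cor:faithful_charac} yields a $^*$\nb-homomorphism $\rho_p\colon A\to A$ with $\rho_p(L^{\sigma_{\Theta_d}}_x)=\bar L^{\sigma_{\Theta_d}}_x$.

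Next I will show $\rho_p$ is an automorphism by exhibiting an inverse. Running the same argument with $\sigma_\Theta(x,p)^2$ in place of $\sigma_\Theta(p,x)^2$ (the bicharacter property now used in the first slot) produces another endomorphism $\eta_p\colon A\to A$ with $\eta_p(L^{\sigma_{\Theta_d}}_x)=\sigma_\Theta(x,p)^2 L^{\sigma_{\Theta_d}}_x$. Because $\Theta$ is antisymmetric, $\braket{p}{\Theta x}+\braket{x}{\Theta p}=0$, so $\sigma_\Theta(p,x)\sigma_\Theta(x,p)=1$, and hence $\rho_p\circ\eta_p$ and $\eta_p\circ\rho_p$ each fix the generators $L^{\sigma_{\Theta_d}}_x$; since these generate $A$ as a $\Cst$\nb-algebra, both compositions equal $\id_A$. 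Therefore $\rho_p$ is invertible.

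Finally, the semigroup action property $\rho_{p+q}=\rho_p\circ\rho_q$ is again a direct consequence of the bicharacter identity: on a generator we compute
\[
\rho_p\bigl(\rho_q(L^{\sigma_{\Theta_d}}_x)\bigr)=\sigma_\Theta(p,x)^2\sigma_\Theta(q,x)^2 L^{\sigma_{\Theta_d}}_x=\sigma_\Theta(p+q,x)^2 L^{\sigma_{\Theta_d}}_x=\rho_{p+q}(L^{\sigma_{\Theta_d}}_x),
\]
and since the generators determine the homomorphisms, equality extends to all of~$A$. Setting $p=0$ gives $\rho_0=\id_A$, completing the verification that $p\mapsto\rho_p$ is an action of~$\NN^k$ by automorphisms.

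The whole argument is essentially a bookkeeping exercise in the bicharacter identity; the only conceptual point that could cause trouble is making sure that the cocycle $\sigma_\Theta$ restricts correctly so that the twisted multiplication on the $\bar L$'s is really governed by $\sigma_{\Theta_d}$ and not by $\sigma_\Theta$. That is handled automatically because the $L^{\sigma_{\Theta_d}}_x$ already satisfy the $\sigma_{\Theta_d}$ relations and the scalars $\sigma_\Theta(p,\cdot)^2$ telescope by the bicharacter property without interfering with those relations.
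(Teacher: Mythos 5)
Your proposal is correct and follows essentially the same route as the paper: verify that $x\mapsto\sigma_\Theta(p,x)^2L^{\sigma_{\Theta_d}}_x$ is a covariant isometric $\sigma_{\Theta_d}$\nb-representation (multiplicativity from the bicharacter identity, Nica covariance because the range projections are unchanged), invoke the universal property, exhibit the inverse induced by $x\mapsto\sigma_\Theta(x,p)^2L^{\sigma_{\Theta_d}}_x$, and obtain the semigroup law from the bicharacter identity in the first slot. You merely spell out the computations the paper leaves implicit.
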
 

Since any automorphism of~$A$ is, in particular, an injective endomorphism with hereditary range, the semigroup homomorphism $\rho\colon \NN^k\to\mathrm{Aut}(A)$ gives rise to a natural product system $A_\rho=(A_{\rho_p})_{p\in\NN^k}$ over $A$ (see, for example, \cite[Example~3.15]{sehnem2021}). We recall here the structure and operations of $A_\rho$. The correspondence $A_{\rho_p}\colon A\leadsto A$ is simply $A$ as a complex vector space. If we write $a\delta_p$ for the element in~$A_{\rho_p}$ that is the image of $a\in A$ under the canonical identification, then the $A$\nb-valued inner product on $A_{\rho_p}$ is given by $\braket{a\delta_p}{b\delta_p}=\rho_p^{-1}(a^*b)$. The right action of~$A$ on~$A_{\rho_p}$ is implemented by $\rho_p$, that is, $a\delta_p\cdot b=a\rho_p(b)\delta_p$ for all $a, b\in A$, while the left action on $A_{\rho_p}$ is simply multiplication of elements in $A$, so $b\cdot a\delta_p=(ba)\delta_p$. The multiplication map $\mu_{p,q}\colon A_{\rho_p}\otimes_AA_{\rho_q}\to A_{\rho_{p+q}}$ is given on an elementary tensor $a\delta_p\otimes b\delta_q$ by $\mu_{p,q}(a\delta_p\otimes b\delta_q)=a\rho_p(b)\delta_{pq}$. With these operations, $A_{\rho}=(A_{\rho_p})_{p\in\NN^k}$ is a product system with coefficient algebra~$A$. 

\begin{lemma} Let $\rho\colon \NN^k\to\mathrm{Aut}(A)$ be the semigroup action from Lemma~\textup{\ref{lem:semi-action}} and let $A_{\rho}=(A_{\rho_p})_{p\in\NN^k}$ be the associated product system over~$\NN^k$ as explained above. Then $A_\rho$ is compactly aligned.
\begin{proof} In order to see that $A_\rho$ is compactly aligned, notice that because $\rho_p$ is an automorphism for all $p\in\NN^k$, each $A_{\rho_p}$ is an imprimitivity $A$\nb-bimodule with left $A$\nb-valued inner product given by $\BRAKET{a\delta_p}{b\delta_p}=ab^*$. Thus for all $p,q\in\NN^k$ one has $$\BRAKET{A_{\rho_p}}{A_{\rho_p}}\BRAKET{A_{\rho_q}}{A_{\rho_q}}=A=\BRAKET{A_{\rho_{p\vee q}}}{A_{\rho_{p\vee q}}}.$$ This implies that $A_\rho$ is compactly aligned (see also Definition 3.7 and Remark 3.8 of~\cite{sehnem2021}).
\end{proof}
\end{lemma}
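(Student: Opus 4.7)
The plan is to reduce compact alignment of $A_\rho$ to a much simpler ``full left inner product'' condition, which holds essentially trivially here because each $\rho_p$ is an automorphism rather than merely an injective endomorphism with hereditary range. The key observation is that when the connecting maps are automorphisms, the right Hilbert $A$-module $A_{\rho_p}$ upgrades to an $A$-imprimitivity bimodule.

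First I would compute the left $A$-valued inner product on $A_{\rho_p}$. Using the formulas in the description of $A_\rho$, a direct check gives $\BRAKET{a\delta_p}{b\delta_p} = ab^*$ for all $a,b\in A$. Because $A = \Tt_r(\NN^d,\sigma_{\Theta_d})$ is unital (with unit $L^{\sigma_{\Theta_d}}_0$), we may take $a = 1$ and let $b$ range over $A$ to obtain $\BRAKET{A_{\rho_p}}{A_{\rho_p}} = A$ for every $p\in\NN^k$; in particular $A_{\rho_p}$ is a full $A$-imprimitivity bimodule.

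Once this is in place, the simplifiability condition of \cite[Definition~3.7]{sehnem2021} becomes trivial: for arbitrary $p,q\in\NN^k$,
$$\BRAKET{A_{\rho_p}}{A_{\rho_p}} \cdot \BRAKET{A_{\rho_q}}{A_{\rho_q}} = A\cdot A = A = \BRAKET{A_{\rho_{p\vee q}}}{A_{\rho_{p\vee q}}},$$
and by \cite[Remark~3.8]{sehnem2021} a simplifiable product system is automatically compactly aligned. If one preferred to verify Fowler's definition directly, the fact that each $A_{\rho_p}$ is an imprimitivity bimodule yields a canonical isomorphism $\Comp(A_{\rho_p}) \cong A$ via the left inner product, so that compacts on any fibre act as left multiplications by elements of $A$; the induced operator on $A_{\rho_{p\vee q}}$ is then itself left multiplication by an element of $A$, hence compact.

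I anticipate no real obstacle here. The only step requiring any care is making precise the identification of $A_{\rho_p}$ as an imprimitivity bimodule in the chosen normalisation and spelling out the left inner product; after that, unitality of $A$ combined with the simplifiability criterion of \cite{sehnem2021} does all the remaining work.
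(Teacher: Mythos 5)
Your proposal is correct and follows essentially the same route as the paper: compute the left inner product $\BRAKET{a\delta_p}{b\delta_p}=ab^*$, observe that each fibre is a full imprimitivity bimodule since $\rho_p$ is an automorphism, and conclude via the simplifiability criterion of \cite{sehnem2021} (Definition~3.7 and Remark~3.8). The extra remark on verifying Fowler's definition directly is a harmless elaboration of the same idea.
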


 Let $A_\rho^\sigma=(A_{\rho_p}^\sigma)_{p\in\NN^k}$ be the product system obtained from $A_\rho$ by twisting the multiplication maps with $\sigma_\Theta$. Specifically, for all $p\in\NN^k$, $A_{\rho_p}^\sigma=A_{\rho_p}$ as an $A$\nb-correspondence, and the isomorphism $\mu^\sigma_{p,q}\colon A_{\rho_p}^\sigma\otimes_A A_{\rho_q}^\sigma\to A_{\rho_{p+q}}^\sigma$ is given by $\sigma_\Theta(p,q)\mu_{p,q}$, for $p,q\in\NN^k$. Notice that the multiplication maps are indeed associative because $\mu$ is and $\sigma_\Theta$ is a $2$\nb-cocycle, and $A^\sigma_\rho$ is compactly aligned because $A_\rho$ is so.

\begin{proposition}\label{pro:nica-coefficient} Let $A_\rho^\sigma=(A_{\rho_p}^\sigma)_{p\in\NN^k}$ be the product system as above.  Let $\iota\colon A\to\Tt_r(\NN^n,\sigma_\Theta)$ be the embedding obtained from the canonical isomorphism $A\cong \Cst(L_x\colon x\in 0_k\times\NN^d)$ from Lemma \textup{\ref{lem:iso-corner}} and identify $\NN^k$ with $\NN^k\times 0_d$ canonically. Then the map that sends $a\delta_p\in A_{\rho_p}^\sigma$ to $\iota(a)L_p\in \Tt_r(\NN^n,\sigma_\Theta)$ induces an isomorphism $\mathcal{N}\mathcal{T}_{A_{\rho}^\sigma}\cong \Tt_r(\NN^n,\sigma_\Theta)$.
\begin{proof} For each $p\in \NN^k$, let $\psi_p$ denote the map that sends $a\delta_p\in A_{\rho_p}^\sigma$ to $\iota(a)L_p\in A$. We will prove that $\psi=\{\psi_p\}_{p\in \NN^k}$ is a Nica covariant representation of $A_\rho^\sigma$. We first show that for all $a\in A$ and $p\in\NN^k$, we have $$L_p\iota(a)L_p^*=\iota(\rho_p(a))L_pL_p^*.$$ Indeed, it suffices to establish this for $a=L_x^{\sigma_{\Theta_d}}(L_y^{\sigma_{\Theta_d}})^*$ with $x,y\in\NN^d$. We compute 
$$L_p\iota(a)L_p^*=L_pL_xL_y^*L_p^*=\sigma_\Theta(p,x)^2\sigma_\Theta(y,p)^2L_xL_y^*L_pL_p^*=\iota(\rho_p(a))L_pL_p^*.$$ Now in order to see that $\psi$ preserves the multiplication in $A_{\rho}^\sigma$, we compute for $a,b\in A$ and $p,q\in\NN^d$, \begin{equation*}
\begin{aligned}
\psi_p(a\delta_p)\psi_q(b\delta_q)=\iota(a)L_p\iota(b)L_q&=\iota(a)L_p\iota(b)L_p^*L_pL_q\\&=\iota(a)\iota(\rho_p(b))L_p(L_p^*L_p)L_q\\&=\sigma_\Theta(p,q)\iota(a\rho_p(b))L_{p+q}\\&=\psi_{p+q}(\mu^\sigma_{p,q}(a\delta_p\otimes b\delta_q)),
\end{aligned}
\end{equation*} proving that $\psi=\{\psi_p\}_{p\in\NN^k}$ is multiplicative. Finally, 
\begin{equation*}
\begin{aligned}
\psi_p(a\delta_p)^*\psi_p(b\delta_p)&=L_p^*\iota(a^*b)L_p=L_p^*\iota(\rho_p(\rho_{p^{-1}}(a^*b)))L_p\\&=L_p^*\iota(\rho_p(\rho^{-1}_{p}(a^*b)))L_pL_p^*L_p\\&=L_p^*L_p\iota(\rho_p^{-1}(a^*b))L_p^*L_p\\&=\iota(\rho_p^{-1}(a^*b))=\iota(\braket{a\delta_p}{b\delta_p}).
\end{aligned}
\end{equation*}
 This shows that $\psi$ also preserves the inner product of $A_{\rho_p}^\sigma$ for each $p\in \NN^k$, and so is a representation of $A_\rho^\sigma$. That $\psi$ is Nica covariant follows because $L_pL_p^*L_qL_q^*=L_{p\vee q}L_{p\vee q}^*$ in $\Tt_r(\NN^n,\sigma_\Theta)$ for all $p,q\in \NN^k$. Thus $\psi=\{\psi_p\}_{p\in\NN^k}$ induces a  homomorphism $\tilde{\psi}\colon \mathcal{N}\mathcal{T}_{A_{\rho}^\sigma}\to \Tt_r(\NN^n,\sigma_\Theta)$ by universal property, which is clearly surjective.

In order to produce the inverse of $\tilde{\psi}$, we will apply the universal property of $\Tt_r(\NN^n,\sigma_\Theta)$. Let $j\in \{1,\ldots, n\}$. If $j\leq k$, we set $\bar{L}_{e_j}\coloneqq \delta_{e_j}\in \mathcal{N}\mathcal{T}_{A_{\rho}^\sigma}$, and for $j>k$, we let $\bar{L}_{e_j}\coloneqq L_{e_{j-k}}^{\sigma_{\Theta_d}}$. Notice that we have introduced no new notation for the image of $A^\sigma_{\rho}$ in $ \mathcal{N}\mathcal{T}_{A_{\rho}^\sigma}$ under the canonical representation. We will show that the set of isometries $\{\bar{L}_{e_j}\mid 1\leq j\leq n\}$ satisfies the relations \eqref{eq:ext-formula} from Proposition~\ref{pro:presentationtoeplitztori}. Indeed, if $i,j\leq k$, then the multiplication in $A_{\rho}^\sigma$ yields $$\bar{L}_{e_i}\bar{L}_{e_j}=\sigma_{\Theta}(e_i, e_j)^2\bar{L}_{e_j}\bar{L}_{e_i}=e^{-2\pi i \theta_{i,j}}\bar{L}_{e_j}\bar{L}_{e_i}.$$ The same will be true for $i,j>k$ because $\sigma_{\Theta_d}$ is simply the restriction of $\sigma_\Theta$ to $(0_k\times \NN^d)\times (0_k\times \NN^d)$. Now for $i\leq k  <j$ or $j\leq k<i$, the relation $\bar{L}_{e_i}\bar{L}_{e_j}=\sigma_{\Theta}(e_i, e_j)^2\bar{L}_{e_j}\bar{L}_{e_i}$ holds because of the right action of $A$ on $A_{\rho_{e_l}}$ for each $1\leq l\leq k$. One can similarly prove that $$\bar{L}^*_{e_i}\bar{L}_{e_j}=e^{2\pi i \theta_{i,j}}\bar{L}_{e_j}\bar{L}_{e_i}^*$$ for all $i,j\in\{1,\ldots, n\}$. By Proposition~\ref{pro:presentationtoeplitztori}, there is a  homomorphism $\varphi\colon \Tt_r(\NN^n,\sigma_\Theta)\to  \mathcal{N}\mathcal{T}_{A_{\rho}^\sigma}$ mapping $L_{e_j}$ to $\bar{L}_{e_j}$ for $j=1,\ldots, n$. This is precisely the inverse of $\psi$.
\end{proof}
\end{proposition}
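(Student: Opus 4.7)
My plan is to build the isomorphism in both directions: use the universal property of $\mathcal{N}\Tt_{A_\rho^\sigma}$ to get a homomorphism into $\Tt_r(\NN^n,\sigma_\Theta)$, then use the presentation of $\Tt_r(\NN^n,\sigma_\Theta)$ from Proposition~\ref{pro:presentationtoeplitztori} to construct an inverse. The central computation, which I would establish first as a lemma, is the intertwining identity
\[
L_p\,\iota(a)\,L_p^* \;=\; \iota(\rho_p(a))\,L_p L_p^*, \qquad p\in\NN^k,\; a\in A.
\]
By linearity and continuity it suffices to verify this for $a = L^{\sigma_{\Theta_d}}_x (L^{\sigma_{\Theta_d}}_y)^*$, and then it reduces to a direct application of \defref{dfn-covariance}(3) twice (to move $L_p$ past $L_x$ and $L_p^*$ past $L_y^*$), collecting the cocycle factors $\sigma_\Theta(p,x)^2 \sigma_\Theta(y,p)^2$ that match the definition of $\rho_p$.

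With that identity in hand, I would define $\psi_p(a\delta_p) \coloneqq \iota(a) L_p$ and verify that $\psi=\{\psi_p\}_{p\in\NN^k}$ is a Nica covariant representation of $A_\rho^\sigma$. Compatibility with the right action and the $A$\nb-valued inner product amounts to showing $\psi_p(a\delta_p)^*\psi_p(b\delta_p)=\iota(\rho_p^{-1}(a^*b))$, which follows by inserting $L_pL_p^*$ and applying the intertwining identity with $\rho_p$ an automorphism. Multiplicativity $\psi_p(a\delta_p)\psi_q(b\delta_q) = \psi_{p+q}(\mu^\sigma_{p,q}(a\delta_p\otimes b\delta_q))$ again uses the intertwining identity to move $L_p$ past $\iota(b)$ and picks up the twist $\sigma_\Theta(p,q)$ from $L_p L_q = \sigma_\Theta(p,q) L_{p+q}$. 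Nica covariance of $\psi$ is then immediate from $L_pL_p^*L_qL_q^* = L_{p\vee q}L_{p\vee q}^*$ in $\Tt_r(\NN^n,\sigma_\Theta)$. This yields a surjective $*$-homomorphism $\tilde\psi\colon \mathcal{N}\Tt_{A_\rho^\sigma}\to \Tt_r(\NN^n,\sigma_\Theta)$, with surjectivity coming from the fact that $\iota(A)$ together with the isometries $\delta_{e_i}$ (for $i\leq k$) generate everything on the right.

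For the inverse I would appeal to \proref{pro:presentationtoeplitztori}. Set $\bar L_{e_j} \coloneqq \delta_{e_j}$ for $j\leq k$ and $\bar L_{e_j}\coloneqq L^{\sigma_{\Theta_d}}_{e_{j-k}}$ (viewed inside $\mathcal{N}\Tt_{A_\rho^\sigma}$ via the canonical inclusion of the coefficient algebra) for $j>k$. Each $\bar L_{e_j}$ is an isometry. I then need to verify the two relations in $(\mathcal{R}_\Theta)$. The relations among indices $\leq k$ follow directly from the multiplication in $A_\rho^\sigma$, since $\delta_{e_i}\delta_{e_j} = \sigma_\Theta(e_i,e_j)\mu_{e_i,e_j}(\delta_{e_i}\otimes\delta_{e_j})$; the relations among indices $>k$ follow because $\sigma_{\Theta_d}$ is the restriction of $\sigma_\Theta$ and $A=\Tt_r(\NN^d,\sigma_{\Theta_d})$ already satisfies them by \proref{pro:presentationtoeplitztori}. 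The only genuinely new content is the mixed commutation between $\delta_{e_i}$ ($i\leq k$) and $L^{\sigma_{\Theta_d}}_{e_{j-k}}$ ($j>k$), and this is precisely what the right action of $A$ on $A_{\rho_{e_i}}^\sigma$ encodes: writing $\delta_{e_i}\cdot a = \rho_{e_i}(a)\delta_{e_i}$ in the module and translating into the Nica--Toeplitz algebra produces exactly the scalar $e^{-2\pi i\theta_{i,j}}$.

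I expect the main technical obstacle to be the bookkeeping of cocycle scalars in the mixed relations (third step), particularly ensuring that the adjoint relation $\bar L_{e_i}^*\bar L_{e_j} = e^{2\pi i\theta_{i,j}}\bar L_{e_j}\bar L_{e_i}^*$ (for $i\neq j$) is obtained with the correct sign, which forces careful use of Nica covariance in $A_\rho^\sigma$. Once the relations are checked, \proref{pro:presentationtoeplitztori} yields a homomorphism $\varphi\colon\Tt_r(\NN^n,\sigma_\Theta)\to \mathcal{N}\Tt_{A_\rho^\sigma}$, and $\tilde\psi\circ\varphi$ and $\varphi\circ\tilde\psi$ agree with the identity on the generating isometries, so both are isomorphisms.
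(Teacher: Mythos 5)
Your proposal follows essentially the same route as the paper's proof: the same intertwining identity $L_p\iota(a)L_p^*=\iota(\rho_p(a))L_pL_pˆ*$ established on generators $L_xL_y^*$, the same Nica covariant representation $\psi_p(a\delta_p)=\iota(a)L_p$ verified step by step, and the same inverse built from the presentation $(\mathcal{R}_\Theta)$ of Proposition~\ref{pro:presentationtoeplitztori} with identical choices of $\bar L_{e_j}$ and the same case split (both indices $\leq k$, both $>k$, mixed). The only difference is cosmetic: you isolate the intertwining identity as a standalone lemma and flag the adjoint relation's sign bookkeeping as the delicate point, which the paper likewise leaves as a "one can similarly prove" remark.
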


The dynamics $\bar{\alpha}$ on $\mathcal{N}\mathcal{T}_{A_{\rho}^\sigma}$ induced by the dynamics $\alpha$ on $\Tt_r(\NN^n,\sigma_\Theta)$ via the isomorphism from Proposition~\ref{pro:nica-coefficient} satisfies $$\bar{\alpha}_t(a\delta_p)=N(p)^{it}a\delta_p,\qquad a\in A, \  \ p\in \NN^k.$$ Thus  $\bar{\alpha}$ has the form specified in equation  (1.7) of~\cite{ALN20},
 for the  homomorphism $N\colon\NN^k\to(0,+\infty)$ given by $p\mapsto e^{\braket{p}{r}}$, where $r=(r_1,\ldots, r_k,0_d)$ with $r_j>0$ for $j\leq k$ and $\braket{p}{r}$ is defined by identifying $\NN^k$ with $\NN^k\times 0_d$. Every \kmsb state of $(\mathcal{N}\mathcal{T}_{A_{\rho}^\sigma}, \bar{\alpha})$ for $\beta>0$ is gauge-invariant by Proposition~\ref{prop:KMS-characterisation}. 

	Thus  \cite[Corollary~3.3]{ALN20} shows that  the map $\phi\mapsto \phi\restriction_A$ is a one-to-one correspondence from \kmsb states of $(\mathcal{N}\mathcal{T}_{A_{\rho}^\sigma}, \bar{\alpha})$ to tracial states of the coefficient algebra~$A$
	that satisfy the inequality (3.2) of \cite{ALN20}. It follows from \cite[Remark~3.2]{ALN20} that a tracial state $\tau$ of $A$ satisfies such an inequality if and only if it has the form $$\tau(a)=\sum_{\substack{m\in \NN^k}}e^{-\beta\braket{m}{r}}\tau_0(\braket{\delta_m}{a \delta_m})\qquad (a\in A),$$ for some trace $\tau_0$ of $A$, which is uniquely determined by~$\tau$ and satisfies $\tau_0(1)=Z(\beta)^{-1}$. Note that condition  (3.3) of \cite{ALN20} is automatically satisfied here. The \kmsb state $\phi$ of $(\mathcal{N}\mathcal{T}_{A_{\rho}^\sigma}, \bar{\alpha})$ extending the tracial state $\tau$ of $A$ is then given on an element $b\in \mathcal{N}\mathcal{T}_{A_{\rho}^\sigma}$ by the formula $$\phi(b)=\sum_{\substack{m\in\NN^k}}e^{-\beta\braket{m}{r}}\tau_0(\braket{\delta_m}{b\delta_m}) ,$$ where the left action of $\mathcal{N}\mathcal{T}_{A_{\rho}^\sigma}$ on $A^\sigma_{\rho_m}$ is obtained by restricting the left action of  $\mathcal{N}\mathcal{T}_{A_{\rho}^\sigma}$ on the Fock space $\mathcal{F}(A_\rho^\sigma)=\bigoplus_{\substack{p\in \NN^k}}A_{\rho_p}^\sigma$ of $A_\rho^\sigma$ to the direct summand $A_{\rho_m}^\sigma$. Using the terminology of Corollary~6.10(ii) of~\cite{ALN20} this implies that  for $\beta>0$ all \kmsb states of our system are of finite type. Combining this with the isomorphism $\mathcal{N}\mathcal{T}_{A_{\rho}^\sigma}\cong\Tt_r(\NN^n,\sigma_\Theta)$ from \proref{pro:nica-coefficient},  we obtain from \cite{ALN20} a one-to-one correspondence between tracial states of $A = \Tt_r(\NN^d,\sigma_{\Thetad})$ and \kmsb states of $(\Tt_r(\NN^n,\sigma_\Theta), \alpha)$ for which the resulting map from traces to \kmsb states coincides with the map given in \proref{pro:KMSfromtraces}. We emphasize that this application of the results from \cite{ALN20} to describe the phase transition for $\beta >0$ depends on the choice of a product system that is adapted to the dynamics in such a way that  \kmsb states are gauge-invariant, a choice that is guided by our Proposition~\ref{prop:KMS-characterisation}.
	
\begin{bibdiv}
  \begin{biblist}

\bib{AaHRS}{article}{
   author={Afsar, Zahra},
   author={an Huef, Astrid},
   author={Raeburn, Iain},
   author={Sims, Aidan},
   title={Equilibrium states on higher-rank Toeplitz non-commutative
   solenoids},
   journal={Ergodic Theory Dynam. Systems},
   volume={40},
   date={2020},
   number={11},
   pages={2881--2912},
   issn={0143-3857},
   review={\MR{4157468}},
   doi={10.1017/etds.2019.20},
}
\bib{ALN20}{article}{
    AUTHOR = {Afsar, Zahra},
AUTHOR={Larsen, Nadia S.},
AUTHOR={Neshveyev, Sergey},
     TITLE = {K{MS} states on {N}ica-{T}oeplitz {$\rm C^*$}-algebras},
   JOURNAL = {Comm. Math. Phys.},
      VOLUME = {378},
      YEAR = {2020},
    NUMBER = {3},
     PAGES = {1875--1929},
      ISSN = {0010-3616},
REVIEW= {\MRref{4150892}{}},
  DOI = {10.1007/s00220-020-03711-6}
}

\bib{bra-rob}{book}{
   author={Bratteli, Ola},
   author={Robinson, Derek W.},
   title={Operator algebras and quantum statistical mechanics. 2},
   series={Texts and Monographs in Physics},
   edition={2},
   note={Equilibrium states. Models in quantum statistical mechanics},
   publisher={Springer-Verlag, Berlin},
   date={1997},
   pages={xiv+519},
   isbn={3-540-61443-5},
   review={\MR{1441540}},
   doi={10.1007/978-3-662-03444-6},
}

\bib{brownlowe_hawkins_sims_2019}{article}{
    AUTHOR = {Brownlowe, Nathan}
     AUTHOR = {Hawkins, Mitchell}
      AUTHOR = {Sims, Aidan},
     TITLE = {The {T}oeplitz noncommutative solenoid and its
              {K}ubo-{M}artin-{S}chwinger states},
   JOURNAL = {Ergodic Theory Dynam. Systems},
  FJOURNAL = {Ergodic Theory and Dynamical Systems},
    VOLUME = {39},
      YEAR = {2019},
    NUMBER = {1},
     PAGES = {105--131},
      ISSN = {0143-3857},
   MRCLASS = {46L10},
  MRNUMBER = {3881127},
       DOI = {10.1017/etds.2017.7},
       URL = {https://doi.org/10.1017/etds.2017.7},
}

\bib{10.2307/1995410}{article}{
 ISSN = {00029947},
author={Busby, Robert C.},
author={Smith, Harvey A.},
 URL = {http://www.jstor.org/stable/1995410},
 journal = {Transactions of the American Mathematical Society},
 number = {2},
 pages = {503--537},
 publisher = {American Mathematical Society},
 title = {Representations of Twisted Group Algebras},
 volume = {149},
 year = {1970}
}

\bib{CM2008}{book}{
   author={Connes, Alain},
   author={Marcolli, Matilde},
   title={Noncommutative geometry, quantum fields and motives},
   series={American Mathematical Society Colloquium Publications},
   volume={55},
   publisher={American Mathematical Society, Providence, RI; Hindustan Book
   Agency, New Delhi},
   date={2008},
   pages={xxii+785},
   isbn={978-0-8218-4210-2},
   review={\MR{2371808}},
   doi={10.1090/coll/055},
}

\bib{Crisp-Laca}{article}{
author = {Crisp, John}
author={Laca, Marcelo},
year = {2007},
month = {01},
pages = {127-156},
title = {Boundary quotients and ideals of Toeplitz $\Cst$\nb-algebras of Artin groups},
volume = {242},
journal = {J. Funct. Anal.},
doi = {10.1016/j.jfa.2006.08.001}
}

\bib{MR731772}{article}{
  author={Elliott, George A.},
  title={On the $K$-theory of the $C^*$-algebra generated by a projective representation of a torsion-free discrete abelian group},
  conference={
    title={Operator algebras and group representations, Vol. I},
    place={Neptun},
    date={1980},
  },
  book={
    series={Monogr. Stud. Math.},
    volume={17},
    publisher={Pitman, Boston, MA},
  },
  date={1984},
  pages={157--184},
  review={\MRref{731772}{}},
}

\bib{Exel:Circle_actions}{article}{
  author={Exel, Ruy},
  title={Circle actions on $C^*$\nobreakdash-algebras, partial automorphisms, and a generalized Pimsner--Voiculescu exact sequence},
  journal={J. Funct. Anal.},
  volume={122},
  date={1994},
  number={2},
  pages={361--401},
  issn={0022-1236},
  review={\MRref{1276163}{95g:46122}},
  doi={10.1006/jfan.1994.1073},
}%g

\bib{Exel:Partial_dynamical}{book}{
  author={Exel, Ruy},
  title={Partial dynamical systems, Fell bundles and applications},
  series={Mathematical Surveys and Monographs},
  volume={224},
  date={2017},
  pages={321},
  isbn={978-1-4704-3785-5},
  isbn={978-1-4704-4236-1},
  publisher={Amer. Math. Soc.},
  place={Providence, RI},
}

\bib{Fowler:Product_systems}{article}{
  author={Fowler, Neal J.},
  title={Discrete product systems of Hilbert bimodules},
  journal={Pacific J. Math.},
  volume={204},
  date={2002},
  number={2},
  pages={335--375},
  issn={0030-8730},
  review={\MR{1907896}},
  doi={10.2140/pjm.2002.204.335},
}

\bib{FOWLER1998171}{article}{
title = {Discrete Product Systems and Twisted Crossed Products by Semigroups},
journal = {J. Funct. Anal.},
volume = {155},
number = {1},
pages = {171 -- 204},
year = {1998},
issn = {0022-1236},
doi = {10.1006/jfan.1997.3227},
url = {http://www.sciencedirect.com/science/article/pii/S0022123697932274},
author = {Fowler, Neal J.},
author={Raeburn, Iain},}

\bib{fuchs2015abelian}{book}{
  title={Abelian Groups},
  author={Fuchs, L\'aszl\'o},
  isbn={9783319194226},
  series={Springer Monographs in Mathematics},
  url={https://books.google.com.br/books?id=2KMvCwAAQBAJ},
  year={2015},
  publisher={Springer International Publishing},
      DOI = {10.1007/978-3-319-19422-6},
}

\bib{aHLRS13}{article}{
   author={an Huef, Astrid},
   author={Laca, Marcelo},
   author={Raeburn, Iain},
   author={Sims, Aidan},
   title={KMS states on the $C^*$-algebras of finite graphs},
   journal={J. Math. Anal. Appl.},
   volume={405},
   date={2013},
   number={2},
   pages={388--399},
   issn={0022-247X},
   review={\MR{3061018}},
   doi={10.1016/j.jmaa.2013.03.055},
}

\bib{LR1}{article}{
author = {Laca, Marcelo},
author={Raeburn, Iain},
year = {1995},
month = {02},
pages = {355--362},
title = {Extending Multipliers from Semigroups},
volume = {123},
journal = {Proc. Amer. Math. Soc.},
doi = {10.2307/2160888}
}

\bib{LACA1996415}{article}{
title = {Semigroup Crossed Products and the Toeplitz Algebras of Nonabelian Groups},
 journal={J. Funct. Anal.},
volume = {139},
number = {2},
pages = {415--440},
year = {1996},
issn ={0022-1236},
doi = {10.1006/jfan.1996.0091},
 review={\MRref {1402771}{46L55 (46L35 47B35)},},
url = {http://www.sciencedirect.com/science/article/pii/S0022123696900919},
author ={Laca, Marcelo},
author={Raeburn, Iain}, 
}

\bib{LACA1998330}{article}{
   author={Laca, Marcelo},
   author={Raeburn, Iain},
   title={A semigroup crossed product arising in number theory},
   journal={J. London Math. Soc. (2)},
   volume={59},
   date={1999},
   number={1},
   pages={330--344},
   issn={0024-6107},
   review={\MR{1688505}},
   doi={10.1112/S0024610798006620},
}

\bib{LR2010}{article}{
   author={Laca, Marcelo},
   author={Raeburn, Iain},
   title={Phase transition on the Toeplitz algebra of the affine semigroup
   over the natural numbers},
   journal={Adv. Math.},
   volume={225},
   date={2010},
   number={2},
   pages={643--688},
   issn={0001-8708},
   review={\MR{2671177}},
   doi={10.1016/j.aim.2010.03.007},
}
\bib{LP}{article}{
author={ Latr{\'e}moli{\`e}re, Fr{\'e}d{\'e}ric},
author={ Packer, Judith},
pages={155--191},
year={2018},
title={Noncommutative solenoids},
journal={New York J. Math.},
volume={24a},}
%    arXiv:\-1110.6227.
%

\bib{Nica:Wiener--hopf_operators}{article}{
  ISSN = {0379-4024},
 URL = {http://www.jstor.org/stable/24715075},
 author = {Nica, Alexandru},
 journal = {J. Operator Theory},
 number = {1},
 pages = {17--52},
 publisher = {Theta Foundation},
 title = {$C^*$\nobreakdash-algebras generated by isometries and Wiener--Hopf operators},
 review={\MRref{1241114}{46L35 (47B35 47C10)}},
 volume = {27},
 year = {1992}
}

\bib{OPT}{article}{
 ISSN = {03794024, 18417744},
 URL = {http://www.jstor.org/stable/24713833},
 author = {Olesen, Dorte},
author={Pedersen, Gert K. },
 author={Takesaki, Masamichi },
 journal = {J. Operator Theory},
 number = {2},
 pages = {237--269},
 publisher = {Theta Foundation},
 title = {Ergodic Actions of Compact Abelian groups},
 volume = {3},
 year = {1980},
}

\bib{ped}{book}{
   author={Pedersen, Gert K.},
   title={$C^*$-algebras and their automorphism groups},
   series={Pure and Applied Mathematics (Amsterdam)},
   note={Second edition of [ MR0548006];
   Edited and with a preface by S\o ren Eilers and Dorte Olesen},
   publisher={Academic Press, London},
   date={2018},
   pages={xviii+520},
   isbn={978-0-12-814122-9},
   review={\MR{3839621}},
}

\bib{C.Phillips}{article}{
author= {Phillips, N. Christopher},
year = {2006},
month = {10},
pages = {},
title = {Every simple higher dimensional noncommutative torus is an AT algebra},
journal =  {arXiv:math/0609783 [math.OA]}
}

\bib{MR679708}{article}{
author= {Rieffel, Marc A.},
title={Morita equivalence for operator algebras},

book={  booktitle={Operator Algebras and Applications },series={Proc. Sympos. Pure Math.},
  publisher={Amer. Math. Soc.},
volume={38},
    place={Providence, RI},
},
   date={1982},
pages={285--298},
doi={10.1090/pspum/038.1}, 
}

\bib{MR1047281}{article}{
  author={Rieffel, Marc A.},
  title={Noncommutative tori---a case study of noncommutative differentiable manifolds},
  conference={
    title={Geometric and topological invariants of elliptic operators},
    address={Brunswick, ME},
    date={1988},
  },
  book={
    series={Contemp. Math.},
    volume={105},
    publisher={Amer. Math. Soc.},
    place={Providence, RI},
  },
  date={1990},
  pages={191--211},
  review={\MRref{1047281}{91d:58012}},
  doi={10.1090/conm/105},
}%b

\bib{sehnem2021}{article}{
      title={Fell bundles over quasi-lattice ordered groups and $\mathrm{C}^*$-algebras of compactly aligned product systems}, 
      author={Camila F. Sehnem},
      year={2021},  
 journal = {M\"{u}nster J. of Math.},
 number = {1},
 pages = {223--263},
 volume = {14},
doi={10.17879/59019509488}
}

\bib{slawny1972}{article}{
author = {Slawny, Joseph},
fjournal = {Communications in Mathematical Physics},
journal ={Comm. Math. Phys.},
number = {2},
pages = {151--170},
publisher = {Springer},
title = {On factor representations and the $C^*$-algebra of canonical commutation relations},
url = {https://projecteuclid.org:443/euclid.cmp/1103857742},
volume = {24},
year = {1972},
}

\end{biblist}
\end{bibdiv}
\end{document}